\newtheorem{theorem}{Theorem}[section]
\newtheorem{conjecture}[theorem]{Conjecture}
\theoremstyle{plain}
\newtheorem{case}{Case}
\numberwithin{subcase}{case}
\newtheorem{claim}[theorem]{Claim}
\newtheorem{question}{Question}
\newtheorem{corollary}[theorem]{Corollary}
\newtheorem{lemma}[theorem]{Lemma}
\newtheorem{proposition}[theorem]{Proposition}
\newtheorem*{remark}{Remark}
\newcommand{\arxiv}[1]{\href{http://arxiv.org/abs/#1}{\texttt{arXiv:#1}}}
\begin{document}

\author{Karl Heuer}
\address{Karl Heuer, Department of Applied Mathematics and Computer Science, Technical University of Denmark, Richard Petersens Plads, building 322, 2800 Kongens Lyngby, Denmark}

\title[]{Hamiltonicity in locally finite graphs: two extensions and a counterexample}

\begin{abstract}
We state a sufficient condition for the square of a locally finite graph to contain a Hamilton circle, extending a result of Harary and Schwenk about finite graphs.

We also give an alternative proof of an extension to locally finite graphs of the result of Chartrand and Harary that a finite graph not containing $K^4$ or $K_{2,3}$ as a minor is Hamiltonian if and only if it is $2$-connected.
We show furthermore that, if a Hamilton circle exists in such a graph, then it is unique and formed by the $2$-contractible edges.

The third result of this paper is a construction of a graph which answers positively the question of Mohar whether regular infinite graphs with a unique Hamilton circle exist.
\end{abstract}

\maketitle

\section{Introduction}

Results about Hamilton cycles in finite graphs can be extended to locally finite graphs in the following way.
For a locally finite connected graph $G$ we consider its Freudenthal compactification $|G|$~\cite{diestel_buch, diestel_arx}. This is a topological space obtained by taking~$G$, seen as a $1$-complex, and adding certain points to it.
These additional point are the \textit{ends} of $G$, which are the equivalence classes of the rays of $G$ under the relation of being inseparable by finitely many vertices.
Extending the notion of cycles, we define \textit{circles}~\cite{inf-cyc-1, inf-cyc-2} in $|G|$ as homeomorphic images of the unit circle $S^1 \subseteq \mathbb{R}^2$ in $|G|$, and we call them \textit{Hamilton circles} of $G$ if they contain all vertices of $G$.
As a consequence of being a closed subspace of $|G|$, Hamilton circles also contain all ends of $G$.
Following this notion we call $G$ \textit{Hamiltonian} if there is a Hamilton circle in $|G|$.

One of the first and probably one of the deepest results about Hamilton circles was Georgakopoulos's extension of Fleischner's theorem to locally finite graphs.

\begin{theorem}\label{fin_fleisch}\cite{fleisch}
The square of any finite $2$-connected graph is Hamiltonian.
\end{theorem}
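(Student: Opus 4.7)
The plan is to prove a strengthened version of Theorem~\ref{fin_fleisch} by induction on $|V(G)|$, since the bare statement is too weak to support induction. The natural strengthening, and the one originally exploited by Fleischner, reads: for every finite $2$-connected graph $G$ and every pair of edges $e,f$ of $G$ sharing a common vertex, $G^2$ admits a Hamilton cycle containing both $e$ and $f$. The extra flexibility lets one prescribe how the Hamilton cycle in a smaller graph enters and leaves a designated vertex, which is precisely the control needed when reattaching a removed piece of $G$.

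For the base case, if $G$ is a cycle (in particular if $|V(G)| \le 3$), then $G$ itself is a Hamilton cycle of $G^2$ and the prescribed-edge condition is trivial to arrange. For the induction step I would work with an open ear decomposition $G = C_0 \cup P_1 \cup \cdots \cup P_k$, where $C_0$ is a cycle and each $P_i$ is a path meeting $C_0 \cup P_1 \cup \cdots \cup P_{i-1}$ exactly at its two endpoints. Since $G$ is not a cycle, some ear $P_j$ has a non-empty interior; choosing $P_j$ carefully so that $G' := G - \mathrm{int}(P_j)$ remains $2$-connected, I would apply the strengthened inductive hypothesis to $G'$ with prescribed edges selected at an endpoint of $P_j$, obtaining a Hamilton cycle $H'$ of $(G')^2$ whose behaviour near that endpoint is under control. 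The vertices of $\mathrm{int}(P_j)$ are then woven back into $H'$: any two consecutive vertices of $P_j$ are at distance at most $2$ in $G$ and hence adjacent in $G^2$, so one replaces a suitable edge of $H'$ by a detour through $P_j$, producing a Hamilton cycle of $G^2$ that, if the detour is set up correctly, still satisfies the strengthened property with respect to the originally prescribed $e$ and $f$.

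The main obstacle lies in this re-insertion step: a meticulous case analysis is required so that the detour not only exists (which the prescribed-edge condition delivers) but also \emph{propagates} the prescribed-edge condition to the new Hamilton cycle of $G^2$, since otherwise the induction breaks. Short ears (lengths $2$ or $3$), chords of $G$ incident to the endpoints of $P_j$, and the need to choose \emph{a priori} the right pair of prescribed edges in $G'$ before invoking the inductive hypothesis together account for the notorious technical length of Fleischner's original argument. A variant approach avoids ears by first establishing that every $2$-connected graph contains a spanning closed trail whose edges dominate all remaining edges, and then converting such a trail into a Hamilton cycle of $G^2$; this moves the difficulty into the structural lemma but still entails case analysis of a comparable flavour.
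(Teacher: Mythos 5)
This statement is Fleischner's theorem, which the paper only quotes from \cite{fleisch} and never proves, so there is no in-paper argument to compare yours against; your proposal has to stand on its own as a proof, and as written it does not. What you give is a correct identification of a workable strategy -- strengthen the induction hypothesis to prescribe two edges at a common vertex, peel off an ear with nonempty interior, apply the hypothesis to the smaller $2$-connected graph, and splice the interior of the ear back in -- but the entire mathematical content of Fleischner's theorem lives in the step you explicitly defer. You write that ``a meticulous case analysis is required so that the detour not only exists \ldots but also propagates the prescribed-edge condition''; that case analysis \emph{is} the proof, and no part of it is carried out. In particular, you never verify that the strengthened statement is provable in the exact form you state it (known strengthenings of this kind come with carefully calibrated side conditions and small exceptional configurations, and getting the induction hypothesis exactly right is itself delicate), you do not specify which edges of $G'$ to prescribe before invoking the hypothesis, and you do not show how to splice $\mathrm{int}(P_j)$ into $H'$ when the only $G^2$-edges leaving $\mathrm{int}(P_j)$ land on the two endpoints of $P_j$, their neighbours, and the second vertices of $P_j$ -- which is precisely why short ears and chords at the endpoints force the notorious branching you allude to.

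A smaller point: the observation that consecutive vertices of $P_j$ are adjacent in $G^2$ is not where the difficulty lies (they are already adjacent in $G$); the difficulty is attaching the two ends of the traversal of $\mathrm{int}(P_j)$ to $H'$ without visiting any vertex twice, and that is governed by which edges $H'$ happens to use near the endpoints of $P_j$. So the verdict is: right skeleton, genuine gap -- the inductive statement is unverified and the re-insertion argument, which is the theorem, is missing. To make this a proof you would need either to execute the case analysis in full or to follow one of the published short proofs (e.g.\ \v{R}\'{\i}ha's, or Georgakopoulos's) whose precise strengthened hypotheses are known to close the induction.
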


\begin{theorem}\label{inf-fleisch}\cite[Thm.\ 3]{agelos}
The square of any locally finite $2$-connected graph is Hamiltonian.
\end{theorem}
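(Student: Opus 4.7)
The plan is to reduce to the finite case (Theorem~\ref{fin_fleisch}) through an exhaustion of $G$ by finite $2$-connected multigraphs, and to extract a Hamilton circle in $|G^2|$ as a topological limit of Hamilton cycles produced by Fleischner's theorem.

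To set up the exhaustion, fix $v_0 \in V(G)$ and, using local finiteness, choose an increasing sequence $V_0 \subseteq V_1 \subseteq \cdots$ of finite vertex sets with $\bigcup_n V_n = V(G)$ such that every component of $G - V_n$ is infinite and attached to $V_n$ through at least two distinct vertices. From $G[V_n]$ and these components I would build a finite multigraph $G_n$ by contracting each component $K$ of $G - V_n$ to a single vertex $v_K$, keeping all edges from $K$ to $V_n$. The $2$-connectivity of $G$ combined with the attachment condition ensures that each $G_n$ is $2$-connected, so Theorem~\ref{fin_fleisch} yields a Hamilton cycle $C_n$ in $G_n^2$.

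Next I would lift each $C_n$ to $G^2$. Each edge of $G_n^2$ incident with an uncontracted vertex corresponds either to an edge or to a length-$2$ path in $G^2$; edges between contracted vertices and $V_n$ are retained; and each contracted vertex $v_K$ records, via its two $C_n$-incidences, a pair of attachment points that ought to be joined ``through the ends of $K$'' in the limit. Since $G^2$ is locally finite, every fixed edge of $G^2$ lies in $G_n^2$ for all sufficiently large $n$, so a standard diagonal/compactness argument (K\"onig's lemma applied to the tree of finite choices) lets me pass to a subsequence along which the indicator of each edge of $G^2$ stabilises. Let $D$ be the set of edges eventually chosen and set $C := \overline{D \cup V(G)} \subseteq |G^2|$.

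The hardest step is to verify that $C$ is indeed a Hamilton circle. Vertex-degree~$2$ at each $v \in V(G)$ is immediate from local finiteness together with the fact that $v$ has degree $2$ in every $C_n$ containing it. That every end of $G^2$ lies on $C$ follows from the observation that, for each component $K$ of $G - V_n$, the two $V_n$-neighbours selected at $v_K$ by $C_n$ force an arc of $C$ to pass through ends of $K$ in the limit. The genuine obstacle is to show that $C$ is homeomorphic to $S^1$ and not merely a disjoint union of arcs and sub-circles: the cyclic orderings of the $C_n$ must be shown to pass coherently to the limit at each end. I would handle this by using the stabilised cyclic orders to define a continuous surjection $S^1 \to C$ and then upgrade it to a homeomorphism via compactness of $|G^2|$ combined with injectivity on vertices and ends, the latter following because every finite cut of $G$ is respected by all but finitely many $C_n$.
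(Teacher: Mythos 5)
This statement is not proved in the paper at all: it is Georgakopoulos's theorem, quoted here as \cite[Thm.~3]{agelos}, and the paper explicitly describes it as one of the deepest results in the area. So the relevant question is only whether your proposed argument is sound, and unfortunately it has a fatal gap at the very first reduction step. When you contract a component $K$ of $G-V_n$ to a single vertex $v_K$, any two attachment vertices $u,w\in V_n$ of $K$ become a pair at distance $2$ in $G_n$, so $uw$ is an edge of $G_n^2$ --- but $u$ and $w$ may be at arbitrarily large distance in $G$, since the only $u$--$w$ connection witnessing this adjacency runs through the (possibly very long) component $K$. Hence $G_n^2$ is \emph{not} obtained from $G^2$ by contraction; it contains many edges with no counterpart in $G^2$, and the Hamilton cycles $C_n$ supplied by Fleischner's theorem will in general use exactly such edges (indeed the two $C_n$-edges at $v_K$ are forced to do so). Your claim that ``each edge of $G_n^2$ incident with an uncontracted vertex corresponds either to an edge or to a length-$2$ path in $G^2$'' is therefore false, the lifting of $C_n$ to $G^2$ is not defined, and the whole compactness/K\"onig extraction has nothing to act on. This failure of squaring to commute with contraction is precisely the central obstacle of the theorem; it is why Georgakopoulos's proof does not proceed by a compactness reduction to the finite Fleischner theorem but instead develops a substantially stronger, locally controllable version of Fleischner's theorem (prescribing how paths enter and leave finite pieces) that can be glued across an exhaustion.

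A secondary, independent weakness is the final step: even granting a well-defined limit edge set $D\subseteq E(G^2)$ with all vertex degrees equal to $2$, concluding that $\overline{D}$ is a single circle requires verifying topological connectedness and edge-degree exactly $2$ at every end of $G^2$ (Lemma~\ref{circ} together with Theorem~\ref{cycspace} and Lemma~\ref{top_conn}); a limit of Hamilton cycles can degenerate into a disjoint union of double rays whose closure is a union of several circles. You acknowledge this but do not supply an argument, and ``the cyclic orders stabilise'' is not by itself enough --- one needs quantitative control of how often the approximating cycles cross each finite cut, which again is information the contracted pictures do not provide. In short, the proposal is not a proof and, as set up, cannot be repaired without essentially reconstructing the machinery of \cite{agelos}.
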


\noindent Following this breakthrough, more Hamiltonicity theorems have been extended to locally finite graphs in this way \cite{brewster-funk, bruhn-HC, agelos, Ha_Leh_Po, heuer_ObSu, heuer_Asra, lehner-HC}.

The purpose of this paper is to extend two more Hamiltonicity results about finite graphs to locally finite ones and to construct a graph which shows that another result does not extend.

The first result we consider is a corollary of the following theorem of Harary and Schwenk.
A \textit{caterpillar} is a tree such that after deleting its leaves only a path is left.
Let $S(K_{1, 3})$ denote the graph obtained by taking the star with three leaves, $K_{1, 3}$, and subdividing each edge once.

\begin{theorem}\label{fin_cater}\cite[Thm.\ 1]{cat_HC}
Let $T$ be a finite tree with at least three vertices. Then the following statements are equivalent:
\begin{enumerate}[\normalfont(i)]
\item $T^2$ is Hamiltonian.
\item $T$ does not contain $S(K_{1,3})$ as a subgraph.
\item $T$ is a caterpillar.
\end{enumerate}
\end{theorem}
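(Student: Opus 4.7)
The plan is to establish the three-way equivalence by proving (ii)$\Leftrightarrow$(iii), (iii)$\Rightarrow$(i), and the contrapositive $\neg$(iii)$\Rightarrow$$\neg$(i). The equivalence (ii)$\Leftrightarrow$(iii) is essentially structural: $T$ is a caterpillar iff the subtree $T'$ induced by the non-leaves of $T$ is a path, equivalently iff no vertex of $T$ has three non-leaf neighbors; and $T$ contains $S(K_{1,3})$ as a subgraph iff some vertex $c$ of $T$ has three neighbors each with a further non-$c$ neighbor, i.e., three non-leaf neighbors. The two negations coincide.

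For (iii)$\Rightarrow$(i) I would give an explicit construction. Let $T$ be a caterpillar with spine $v_1, \ldots, v_k$ and let $L_i$ denote the set of leaves attached to $v_i$. Starting from a standard Hamilton cycle of $P_k^2$ such as $v_1, v_3, v_5, \ldots, v_4, v_2$, I would expand each $v_i$ in the cycle into a path traversing $v_i$ and all of $L_i$. This expansion is valid because each leaf $u \in L_i$ lies at $T$-distance at most $2$ from $v_{i\pm 1}$ and from every other leaf in $L_i$. The boundary cases $k \leq 2$ (stars and double-stars) give $T^2$ (nearly) complete, where Hamiltonicity is immediate.

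The converse $\neg$(iii)$\Rightarrow$$\neg$(i) is where the main work lies. Assume $T$ is not a caterpillar, so some vertex $c$ has three non-leaf neighbors $n_1, n_2, n_3$. Let $T_1, \ldots, T_r$ denote the components of $T - c$, indexed so that $n_i \in T_i$ for $i \in \{1, 2, 3\}$, and suppose for contradiction that $C$ is a Hamilton cycle in $T^2$. The key structural observation is that every $T^2$-edge between distinct branches $T_i, T_j$ joins their ``roots'' (specifically the edge $n_i n_j$ when both branches are non-leaf), while every $T^2$-edge from $T_i$ to $c$ reaches only $n_i$ or a $T_i$-neighbor of $n_i$. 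Writing $\alpha_i$ for the number of cycle edges from $c$ to $T_i$, $\beta_{ij} \in \{0, 1\}$ for the cross-branch cycle edges, and $s_i$ for the number of arcs of $C$ lying in $T_i$, I obtain $\sum_i \alpha_i = 2$ and $\alpha_i + \sum_{j \neq i} \beta_{ij} = 2 s_i$, whence $\sum_i s_i = 1 + \sum_{i < j} \beta_{ij}$. Viewing $B$ as the graph on $\{1, \ldots, r\}$ with edge set $\{ij : \beta_{ij} = 1\}$, which has maximum degree at most $2$, an analysis of its possible shapes (disjoint unions of paths and cycles) will force some non-leaf $i \in \{1, 2, 3\}$ to satisfy $\sum_{j \neq i} \beta_{ij} = 2$. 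Then both cycle edges at $n_i$ are cross-branch, so the arc(s) of $C$ lying in $T_i$ must begin and end at $n_i$ itself---impossible since $T_i$ contains vertices other than $n_i$.

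The main obstacle is this final case analysis: handling the subcase $s_i \geq 2$ for some branch (which forces $c$'s two cycle edges to concentrate on a single branch) and accommodating any additional leaf neighbors of $c$, which contribute singleton branches and extra vertices to $B$.
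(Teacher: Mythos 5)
The paper does not actually prove Theorem~\ref{fin_cater}; it quotes it from Harary and Schwenk, so your argument has to stand on its own. Most of it does. The equivalence (ii)$\Leftrightarrow$(iii) is correct as you state it. Your counting argument for $\neg$(iii)$\Rightarrow\neg$(i) is sound and does complete: the identities $\sum_i\alpha_i=2$ and $\alpha_i+\sum_{j\neq i}\beta_{ij}=2s_i$ are right, $\Delta(B)\le 2$ holds because every cross-branch cycle edge leaving $T_i$ is incident with $n_i$, and since $s_i\ge 1$ for each of the $r\ge 3$ branches one gets $r-1\le|E(B)|=\sum_i s_i-1\le r$. If $|E(B)|=r-1$ then all $s_i=1$ and $B$ is a Hamilton path on the branches, so some non-leaf branch is internal; if $|E(B)|=r$ then $B$ is a disjoint union of cycles, every branch has $\deg_B=2$, and all but one branch has $s_i=1$. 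Either way some non-leaf branch $T_i$ has $s_i=1$ and $\deg_B(i)=2$, so its unique arc of $C$ has both boundary edges at $n_i$, hence is the single vertex $n_i$, leaving $T_i$ uncovered. (Note the contradiction needs $s_i=1$ \emph{and} $\deg_B(i)=2$ simultaneously; $\deg_B(i)=2$ alone does not suffice, as the branch with $s_m=2$ in the second case shows. Your sketch is slightly loose on this point but the analysis above closes it.)

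The one genuine flaw is in (iii)$\Rightarrow$(i). In the cycle $v_1,v_3,v_5,\dots,v_4,v_2$ the cycle-neighbours of an interior spine vertex $v_i$ are $v_{i-2}$ and $v_{i+2}$, not $v_{i\pm1}$, and a leaf of $L_i$ has $T$-distance $3$ to $v_{i\pm2}$. So ``expanding $v_i$ into a path through $v_i$ and $L_i$'' produces non-edges of $T^2$ whenever an interior spine vertex carries a leaf; your stated justification (distance at most $2$ to $v_{i\pm1}$) refers to vertices that are not adjacent to $v_i$ on that cycle. The repair is standard and is exactly the device the paper builds for its topological generalisation (the partition $\mathcal{P}_T$ and the square strings of Lemma~\ref{order}): insert the leaves of $L_i$ not at $v_i$ but on the cycle edge $v_{i-1}v_{i+1}$ that skips over $v_i$, each such leaf being at distance $2$ from both $v_{i-1}$ and $v_{i+1}$, while $v_i$ itself is visited on the other pass; the sets $L_1$ and $L_k$ are inserted on the two distance-one edges at the turning points. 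With this correction the construction, and hence the whole proof, goes through.
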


Theorem~\ref{fin_cater} has the following obvious corollary.

\begin{corollary}\label{fin_cater_impl}~\cite{cat_HC}
The square of any finite graph $G$ on at least three vertices such that $G$ contains a spanning caterpillar is Hamiltonian.
\end{corollary}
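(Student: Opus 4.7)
The plan is straightforward, since this is genuinely a one-line corollary of Theorem~\ref{fin_cater}. Let $T$ be a spanning caterpillar of $G$. The first step is the purely combinatorial observation that the square operation is monotone under spanning subgraph inclusion: if $T$ is a spanning subgraph of $G$ on the same vertex set, then any two vertices at $T$-distance at most $2$ are also at $G$-distance at most $2$, so $T^2$ is a spanning subgraph of $G^2$, and in particular $E(T^2) \subseteq E(G^2)$.

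The second step is to apply Theorem~\ref{fin_cater} to $T$ itself. Since $G$ has at least three vertices and $T$ is spanning, $T$ is a tree on at least three vertices, so the implication (iii)$\Rightarrow$(i) yields a Hamilton cycle $C$ in $T^2$. Because $V(T^2) = V(G) = V(G^2)$ and $E(T^2) \subseteq E(G^2)$, the cycle $C$ is automatically a Hamilton cycle of $G^2$, which is what we wanted.

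There is no real obstacle here; the argument is just the trivial monotonicity of the square operation combined with the nontrivial content of Theorem~\ref{fin_cater}. This matches the author's description of the statement as an ``obvious corollary''.
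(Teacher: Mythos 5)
Your proposal is correct and is exactly the argument the paper has in mind when it calls the corollary ``obvious'': since a spanning caterpillar $T$ satisfies $T^2 \subseteq G^2$ on the same vertex set, the Hamilton cycle of $T^2$ furnished by Theorem~\ref{fin_cater} is a Hamilton cycle of $G^2$. No issues.
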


While the proof of Corollary~\ref{fin_cater_impl} is immediate, the proof of the following extension of it, which is the first result of this paper, needs more work.
We call the closure $\overline{H}$ in $|G|$ of a subgraph $H$ of $G$ a \textit{standard subspace} of $|G|$.
Extending the notion of trees, we define \textit{topological trees} as topologically connected standard subspaces not containing any circles.
As an analogue of a path, we define an \textit{arc} as a homeomorphic image of the unit interval $[0,1] \subseteq \mathbb{R}$ in $|G|$.
Note that for standard subspaces being topologically connected is equivalent to being arc-connected by Lemma~\ref{arc_conn}.
For our extension we adapt the notion of a caterpillar to the space $|G|$ and work with \textit{topological caterpillars}, which are topological trees $\overline{T}$ such that $\overline{T-L}$ is an arc, where $T$ is a forest in $G$ and $L$ denotes the set of vertices of degree $1$ in~$T$.

\begin{theorem}\label{top_catp_HC}
The square of any locally finite connected graph $G$ on at least three vertices such that $|G|$ contains a spanning topological caterpillar is Hamiltonian.
\end{theorem}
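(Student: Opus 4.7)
Plan: Let $A = \overline{T - L}$ be the spine arc with endpoints $a, b \in |G|$, and fix a parameterization $[0,1] \to A$ that linearly orders the spine vertices (the non-leaf vertices of $T$). A short topological argument shows that any two consecutive spine vertices $u, v$ in this order must be $T$-adjacent: the subarc between them has no other spine vertex in its interior, so it cannot contain interior points of any edge whose second endpoint lies outside $\{u, v\}$, nor any end (since an interior end would need to be an accumulation point of spine vertices from within the subarc, but only two such vertices are available). Hence the subarc is exactly the edge $uv$, so $uv \in E(T)$. In particular, each component of $T - L$ is path-like (a finite path, a ray, or a double ray), and so the spine vertices admit a proper $2$-coloring by red and blue in which $T$-adjacent spine vertices differ in color.

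Using such a $2$-coloring I define two walks $W_1$ and $W_2$. The walk $W_1$ is obtained by traversing $A$ from $a$ to $b$, including at each red spine vertex $v$ the vertex $v$ itself, and at each blue spine vertex $w$ the entire set of $T$-leaves of $w$ in any fixed order; the walk $W_2$ is defined symmetrically with the colors swapped. Consecutive elements of $W_1$ are at $T$-distance at most~$2$ and are therefore connected by $T^2$-edges: between a red spine vertex $v$ and the leaves of the neighboring blue spine vertex $w$, the jumps $v \to \ell_{w,1}$, $\ell_{w,i} \to \ell_{w,i+1}$, and $\ell_{w,k_w} \to v'$, where $v'$ is the next red spine vertex, all pass through $w$; if the leaf set of $w$ is empty, then $v$ and $v'$ are themselves at $T$-distance $2$ through $w$. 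Analogous adjacencies hold for $W_2$.

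In the base case, where the ends of $G$ on $A$ are only the endpoints $a$ and $b$ themselves, the walks $W_1$ and $W_2$ are two arcs in $|G|$ from $a$ to $b$ that together cover every vertex of $G$ and meet only at $\{a, b\}$; their union is then the desired Hamilton circle.

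The main obstacle is the case in which $A$ has ends in its interior, which arises precisely when $T$ is disconnected as a graph and distinct components of $T$ share ends of $G$. At such an interior end $\omega$, both $W_1$ and $W_2$ accumulate from both sides of $\omega$ along $A$, so $\omega$ lies in $W_1 \cap W_2$ and their union is pinched at $\omega$ rather than being a circle. To overcome this I would exploit the connectivity of $G$, which forces additional $G$-edges between distinct components of $T$; using the $G^2$-adjacencies these provide, one can reroute exactly one of $W_1, W_2$ locally near each interior end $\omega$ so that only one of the two walks passes through $\omega$. Performing this rerouting consistently at every interior end of $A$, and checking that the resulting subspace is homeomorphic to $S^1$, constitutes the technical heart of the proof.
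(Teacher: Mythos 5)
Your setup is essentially the paper's: the red/blue classes are the partition $\mathcal{P}_T$ of Lemma~\ref{order} with its bipartition, and your walks $W_1,W_2$ are the ``square strings'' used there. The base case (no interior ends on the spine, i.e.\ $T$ connected) is fine. But the case you defer to ``the technical heart of the proof'' is precisely the content of the theorem beyond the finite/naive case, and the fix you sketch -- ``reroute exactly one of $W_1,W_2$ locally near each interior end'' -- does not work as a local operation. First, a jump over an interior end $\omega$ must be realised by an actual edge of $G^2$, i.e.\ by two vertices at $G$-distance at most $2$; any such edge lands in some other graph-theoretical component of $T$ and thereby skips an entire interval of partition classes, so there is no local detour around $\omega$ available in general. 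One instead needs a finite path of $G$ spanning the whole gap between the components one wants to connect (this is Lemma~\ref{shortcut}, whose proof already requires a nontrivial compactness argument), and that path may pass through many intermediate components, each of which must then be threaded by the two arcs in a coordinated way. Second, for the walk that is to avoid $\omega$ to really avoid it, its closure must contain only finitely many vertices near $\omega$; since your fixed $2$-colouring puts infinitely many vertices of each colour into every neighbourhood of $\omega$, you must reassign infinitely many vertices between the two walks on the entire tail of the relevant component -- this is what Lemma~\ref{decomp} provides (two alternative coverings of a component by two paths/rays, with a parity condition on the endvertices), and it is not a perturbation of your $W_1,W_2$.

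Third, the choices at different ends interact globally: which of the two arcs uses which jump edge, and in which of the two coverings each intermediate component is traversed, is a simultaneous routing problem whose solvability is not obvious (a wrong set of local choices closes a premature cycle or disconnects one of the arcs). The paper resolves this by encoding the components and jump edges as a finite Eulerian auxiliary multigraph and repeatedly splitting degree-$4$ vertices while preserving the Euler property (Lemma~\ref{euler_split}), until the multigraph becomes a cycle whose two paths between the relevant components prescribe the routing of the two arcs. Finally, the interior ends of the spine arc can accumulate (there may be uncountably many), so ``performing this rerouting consistently at every interior end'' cannot be done end by end; the paper instead builds nested pairs of arcs $\overline{A^i},\overline{B^i}$ covering ever larger unions of partition classes and passes to the limit. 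None of these three ingredients is routine, so as it stands the proposal identifies the difficulty correctly but does not prove the theorem.
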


The other two results of this paper concern the uniqueness of Hamilton circles.
The first is about finite \textit{outerplanar graphs}.
These are finite graphs that can be embedded in the plane so that all vertices lie on the boundary of a common face.
Clearly, finite outerplanar graphs have a Hamilton cycle if and only if they are $2$-connected.
In a $2$-connected graph call an edge $2$-\textit{contractible} if its contraction leaves the graph $2$-connected.
It is also easy to see that any finite $2$-connected outerplanar graph has a unique Hamilton cycle.
This cycle consists precisely of the $2$-contractible edges of the graph (except for the $K^3$), as pointed out by Sys\l{}o~\cite{syslo}.
We summarise this with the following proposition.

\begin{proposition}\label{summary}
\begin{enumerate}[\normalfont(i)]
\item A finite outerplanar graph is Hamiltonian if and only if it is $2$-connected.
\item \cite[Thm.~6]{syslo} Finite $2$-connected outerplanar graphs have a unique Hamilton cycle, which consists precisely of the $2$-contractible edges unless the graph is isomorphic to a $K^3$.
\end{enumerate}
\end{proposition}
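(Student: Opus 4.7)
For part (i), the forward direction is a standard consequence of Hamiltonicity: any finite graph on at least three vertices that contains a Hamilton cycle is $2$-connected, since deleting any single vertex leaves a Hamilton path. For the converse, the plan is to fix an outerplanar embedding of $G$ and consider the boundary of the outer face. Since $G$ is $2$-connected and plane, every face boundary is a cycle; because all vertices lie on the outer face by the definition of outerplanar, this cycle is a Hamilton cycle $C$ of $G$.

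For part (ii), the main structural input is the following observation about the outerplanar embedding: every edge of $G$ is either an edge of the outer cycle $C$ or a chord of $C$, and because no two chords cross in the embedding, each chord $uv$ separates the remaining vertices of $C$ into two nonempty arcs with no chord connecting them. Consequently $G - \{u,v\}$ has at least two components whenever $uv$ is a chord and $G \neq K^3$. I would establish this observation first and then use it to handle both halves of (ii).

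To prove uniqueness I would argue by contradiction: let $C'$ be any Hamilton cycle of $G$; if $C'$ contained a chord $uv$, then $C' - \{u,v\}$ would be a Hamilton path of $G - \{u,v\}$, contradicting that $G - \{u,v\}$ is disconnected. Hence every edge of $C'$ lies on $C$, forcing $C' = C$ since both are spanning cycles.

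For the $2$-contractibility statement, the first step is the elementary observation that in a $2$-connected graph $G$ an edge $e = uv$ is $2$-contractible if and only if $G - \{u,v\}$ is connected, since the only candidate for a cut vertex in $G/e$ is the merged vertex (for any other vertex $w$, connectedness of $G/e - w$ is equivalent to connectedness of $G - w$). Combining this with the structural observation, no chord of $G$ is $2$-contractible when $G \neq K^3$, while every edge $uv$ of $C$ is $2$-contractible because $C - \{u,v\}$ is a spanning connected subpath of $G - \{u,v\}$. This yields the desired equality between $E(C)$ and the set of $2$-contractible edges. The main obstacle will be verifying the structural observation about chords cleanly; everything else is routine once that is in place.
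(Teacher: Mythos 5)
Your proposal is correct. Note that the paper gives no proof of this proposition at all: it declares part (i) clear and cites Sys\l{}o for part (ii), so there is no argument of the paper's to compare against. Your outline is the standard one and all its steps check out: the outer face boundary of a $2$-connected plane graph is a cycle, a chord $uv$ of the outer cycle separates $G-\{u,v\}$ because a connecting edge would either join the two components of $C-\{u,v\}$ along $C$ (impossible) or be a crossing chord (contradicting planarity), and your reduction of $2$-contractibility of $uv$ to connectedness of $G-\{u,v\}$ is exactly right for $|V(G)|\geq 4$ --- indeed the requirement $|V(G/e)|\geq 3$ is precisely what fails for $K^3$ and explains the exception in the statement. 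The only point worth making explicit when you write this up is that in the uniqueness step the two vertices $u,v$ are consecutive on $C'$ (since $uv\in E(C')$), which is what guarantees that $C'-\{u,v\}$ is a single spanning path of $G-\{u,v\}$ rather than two paths; you use this implicitly and it is correct.
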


Finite outerplanar graphs can also be characterised by forbidden minors, which was done by Chartrand and Harary.

\begin{theorem}\label{outerplanar_count_char}\cite[Thm.\ 1]{char_har}
A finite graph is outerplanar if and only if it contains neither a $K^4$ nor a $K_{2, 3}$ as a minor.\footnote{\label{stronger_K_4}Actually these statements can be strengthened a little bit by replacing the part about not containing a $K^4$ as a minor by not containing it as a subgraph.
This follows from Lemma~\ref{K^4_minor_subgr}.}
\end{theorem}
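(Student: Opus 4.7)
The plan is to reduce to the Kuratowski--Wagner theorem for planarity. The forward direction is the easier one: outerplanarity is preserved under edge deletions, vertex deletions, and edge contractions (the contracted vertex still lies on the boundary of the outer face), so it is a minor-closed property, and it suffices to check that $K^{4}$ and $K_{2,3}$ themselves are not outerplanar. This is immediate by inspection: in any planar embedding of either graph, at least one vertex fails to lie on the outer face.

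For the converse, I would use the \emph{cone} construction. Let $G^{*}$ be the graph obtained from $G$ by adding a new vertex $v^{*}$ adjacent to every vertex of $G$. The first claim is that $G$ is outerplanar if and only if $G^{*}$ is planar. Given an outerplanar embedding of $G$, place $v^{*}$ in the outer face and draw its edges without crossings; conversely, in any planar embedding of $G^{*}$, deleting $v^{*}$ yields a planar embedding of $G$ in which every vertex lies on the boundary of the face that was formerly occupied by $v^{*}$, so $G$ is outerplanar.

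The second claim is that $G^{*}$ has a $K^{5}$- or $K_{3,3}$-minor if and only if $G$ has a $K^{4}$- or $K_{2,3}$-minor. For the ``if'' direction, adjoin $\{v^{*}\}$ as an additional branch set: attached to a $K^{4}$-minor this produces a $K^{5}$-minor, and attached to the smaller side of a $K_{2,3}$-minor it produces a $K_{3,3}$-minor, using that $v^{*}$ is adjacent to every vertex of $G$. For ``only if'', the vertex $v^{*}$ lies in exactly one branch set of any $K^{5}$- or $K_{3,3}$-minor in $G^{*}$; removing that branch set yields the claimed $K^{4}$- or $K_{2,3}$-minor in $G$, since the connecting paths between the surviving branch sets avoid $v^{*}$ and hence live already in $G$.

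Combining both claims with Wagner's theorem (a finite graph is planar iff it contains neither $K^{5}$ nor $K_{3,3}$ as a minor) completes the argument. The main obstacle, to the extent that there is one, is making the cone equivalence $G$ outerplanar $\iff$ $G^{*}$ planar fully rigorous across all planar embeddings of $G^{*}$; the core topological content is already packaged inside Wagner's theorem, which I would treat as a black box.
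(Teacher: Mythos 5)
The paper does not prove this statement at all: it is quoted as an external result of Chartrand and Harary, so there is no in-paper argument to compare yours against. Your proposal is the standard apex-vertex reduction to Wagner's theorem and is essentially correct: the equivalence ``$G$ outerplanar $\iff$ $G^{*}$ planar'' together with the minor correspondence $K^{4}\leftrightarrow K^{5}$, $K_{2,3}\leftrightarrow K_{3,3}$ does yield the characterisation. Two small points to tighten. First, in the ``only if'' direction of your second claim, $v^{*}$ lies in \emph{at most} one branch set of a $K^{5}$- or $K_{3,3}$-model in $G^{*}$, not exactly one; in the remaining case the whole model already lives in $G=G^{*}-v^{*}$, so $G$ contains $K^{5}$ or $K_{3,3}$ as a minor and a fortiori $K^{4}$ or $K_{2,3}$ --- trivial, but the case should be mentioned. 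Second, the forward half of your first claim uses the fact that a new vertex placed inside a face of a plane graph can be joined without crossings to every vertex on that face's boundary; for outerplanar graphs that are not $2$-connected the outer face boundary need not be a cycle, so this is exactly the topological content you correctly flag as needing care (it is a standard fact, but it is not ``packaged inside Wagner's theorem''). The backward half --- deleting $v^{*}$ from a plane drawing of $G^{*}$ leaves every vertex of $G$ on the boundary of the merged face --- is fine as stated. Your forward direction (outerplanarity is minor-closed and $K^{4}$, $K_{2,3}$ are not outerplanar) is also fine, though ``by inspection'' for non-outerplanarity is most cleanly replaced by the same cone argument: if either graph were outerplanar, its cone ($K^{5}$, respectively a supergraph of $K_{3,3}$) would be planar.
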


In the light of Theorem~\ref{outerplanar_count_char} we first prove the following extension of statement~(i) of Proposition~\ref{summary} to locally finite graphs.

\begin{theorem}\label{HC_K_4-K_2,3}
Let $G$ be a locally finite connected graph.
Then the following statements are equivalent:
\begin{enumerate}[\normalfont(i)]
\item $G$ is $2$-connected and contains neither $K^4$ nor $K_{2,3}$ as a minor.$^{\ref{stronger_K_4}}$
\item $|G|$ has a Hamilton circle $C$ and there exists an embedding of $|G|$ into a closed disk such that $C$ is mapped onto the boundary of the disk.
\end{enumerate}
Furthermore, if statements (i) and (ii) hold, then $|G|$ has a unique Hamilton circle.
\end{theorem}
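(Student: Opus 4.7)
The plan is to establish both directions of the equivalence and then deduce uniqueness.

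\textbf{Direction (ii) $\Rightarrow$ (i).} Suppose $|G|$ has a Hamilton circle $C$ and admits an embedding into a closed disk mapping $C$ onto the boundary. For $2$-connectedness: for any vertex $v$ the set $C \setminus \{v\}$ is an arc in $|G| - v$ passing through every other vertex, so $|G| - v$ is arc-connected, whence $G - v$ is connected by Lemma~\ref{arc_conn} together with the standard correspondence between arcs in $|G|$ and paths in $G$. For the forbidden-minor conditions, any $K^4$-subgraph or $K_{2,3}$-minor of $G$ is realised inside some finite subgraph $H$, and the restriction of the embedding to $H$ places every vertex of $H$ on a common face, so $H$ is outerplanar, contradicting Theorem~\ref{outerplanar_count_char}.

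\textbf{Direction (i) $\Rightarrow$ (ii).} The idea is to exhaust $G$ by an ascending chain $G_1 \subseteq G_2 \subseteq \cdots$ of finite $2$-connected subgraphs with $\bigcup_n G_n = G$, obtainable for instance from an ear decomposition of $G$. Each $G_n$ inherits the forbidden-minor conditions, so by Theorem~\ref{outerplanar_count_char} it is outerplanar, and by Proposition~\ref{summary}(ii) it has a unique Hamilton cycle $C_n$ that bounds the outer face of any outerplanar embedding $\iota_n$ of $G_n$. Applying K\"onig's Infinity Lemma to the finitely many outerplanar embeddings of each $G_n$, with compatibility given by restriction to $G_{n-1}$, yields a coherent sequence $(\iota_n)$ that assembles into a planar embedding $\iota$ of $G$. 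One then extends $\iota$ continuously to the ends of $G$, realising each end as the limit point of an eventually nested sequence of outer-face boundary arcs of the $G_n$, thereby obtaining an embedding of $|G|$ into a closed disk. The Hamilton circle $C$ is read off as the preimage of the disk boundary. The principal obstacle is verifying that this preimage is genuinely homeomorphic to $S^1$, contains every vertex exactly once, and accommodates every end as a single point; this requires careful bookkeeping on how the cycles $C_n$ evolve as ears are added and how ends are captured by the boundary arcs.

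\textbf{Uniqueness.} Let $C'$ be any Hamilton circle of $|G|$ and work in the disk embedding provided by (ii). Assume for contradiction that $C'$ uses an edge $e$ of $G$ that is a chord of $C$, i.e.\ whose interior is mapped into the open disk. Then $e$ splits the closed disk into two closed half-disks $D_1, D_2$ meeting only along $e$, and if $G$ is not a single triangle, $2$-connectedness forces each $D_i$ to contain at least one vertex of $G$ not incident to $e$. The subarc of $C'$ obtained by removing the interior of $e$ runs between the endpoints of $e$ and, by injectivity of the circle parametrisation, revisits neither endpoint, hence lies entirely in one of $D_1, D_2$. Consequently $C'$ avoids every vertex (and end) in the opposite half-disk, contradicting the Hamiltonicity of $C'$. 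Therefore $C'$ uses only edges of $C$, and since $C$ is already a topological circle covering every vertex and end, we obtain $C' = C$. Combined with the finite uniqueness in Proposition~\ref{summary}(ii), this also identifies the edges of $C$ with the $2$-contractible edges of $G$, matching Sys\l{}o's characterisation.
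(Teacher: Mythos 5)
Your directions (ii)$\Rightarrow$(i) and the uniqueness argument are essentially sound. The former is even a little more direct than the paper's (which passes through the contraction minors $G_K$ and Lemma~\ref{struct_1}): restricting the disk embedding to a finite subgraph containing the forbidden minor does exhibit an outerplanar graph, since all vertices sit on $\partial D$ and hence on the boundary of the unbounded face. Your uniqueness argument is a genuinely different and rather elegant route: the paper instead projects two putative Hamilton circles down to two distinct Hamilton cycles of some finite outerplanar $G_{K_j}$ and invokes Proposition~\ref{summary}(ii), whereas you argue topologically inside the disk. Your argument works, with one correction: it is not $2$-connectedness but simplicity that forces vertices on both sides of a chord $e=uv$ (if one of the two arcs of $\partial D$ between $u$ and $v$ contained no further vertex it would have to be a single edge of $C$ parallel to $e$; it cannot consist solely of ends since a nontrivial subarc of a circle must contain inner points of edges).

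The genuine gap is in (i)$\Rightarrow$(ii), which is the heart of the theorem, and it is not merely the ``careful bookkeeping'' you defer. Your plan exhausts $G$ by finite $2$-connected \emph{subgraphs} $G_n$ and hopes to read the Hamilton circle off as the limit of the outer cycles $C_n$. But the cycles $C_n$ are not nested: in the one-way infinite ladder (two rays $u_1u_2\ldots$, $v_1v_2\ldots$ with rungs $u_iv_i$), the unique Hamilton cycle of the first $n$ squares uses the rung $u_{n+1}v_{n+1}$, which becomes an interior chord of $G_{n+1}$. Consequently there is no single disk in which each $C_n$ is drawn on the boundary compatibly with $C_{n+1}$; a coherent sequence of combinatorial outerplanar embeddings obtained from K\"onig's Lemma does not by itself produce a point map into a fixed disk $D$ that stabilises, converges at the ends, and maps \emph{something} homeomorphically onto $\partial D$. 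This is exactly why the paper works with the contraction minors $G_{K_i}$ rather than subgraphs: Lemma~\ref{struct_1} guarantees every component of $G-K_i$ attaches at exactly two vertices, so the stable edge sets $E_i=E(C_i)\cap E(K_i)$ are nested and the contracted vertices reserve the boundary arcs where future material (and ultimately the ends) must land; Lemma~\ref{straight_embed} forces chords to be straight lines so that convexity yields continuity of the extension $\overline{\sigma}^*$ at ends; and property $(\ast)$ together with Claims~1 and~2 delivers injectivity and the surjectivity $\partial D\subseteq\mathrm{Im}(\overline{\sigma}^*)$, which is what actually produces the Hamilton circle. None of these mechanisms is present in your sketch, and without a replacement for them the construction of the embedding of $|G|$ --- and hence of the circle itself --- does not go through.
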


\noindent From this we then obtain the following corollary, which extends statement (ii) of Proposition~\ref{summary}.

\begin{corollary}\label{Cor_contr}
Let $G$ be a locally finite $2$-connected graph not containing $K^4$ or $K_{2,3}$ as a minor, and not isomorphic to $K^3$.
Then the edges contained in the Hamilton circle of $|G|$ are precisely the $2$-contractible edges of $G$.
\end{corollary}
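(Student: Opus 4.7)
The plan is to leverage Theorem~\ref{HC_K_4-K_2,3}, which furnishes the unique Hamilton circle $C$ of $|G|$ together with an embedding of $|G|$ into a closed disk $D$ sending $C$ onto $\partial D$.

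First I would reduce the statement to a purely combinatorial equivalence: for a $2$-connected locally finite graph $G\not\cong K^3$, an edge $e=uv$ is $2$-contractible if and only if $\{u,v\}$ is not a $2$-cut of $G$. Indeed, the vertex $z$ of $G/e$ obtained by identifying $u$ and $v$ is a cut vertex of $G/e$ exactly when $\{u,v\}$ separates $G$, whereas any other would-be cut vertex of $G/e$ would already be one of $G$, contradicting its $2$-connectedness. Since $G\neq K^3$ guarantees $|V(G)|\geq 4$, the graph $G/e$ has at least three vertices, so it suffices to prove that $e\in E(C)$ if and only if $\{u,v\}$ is not a $2$-cut of $G$.

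For the forward implication I would argue by contradiction: assume $e=uv\in E(C)$ and that $\{u,v\}$ separates $G$ into components including $X$ and $Y$. Let $\alpha$ be the subarc of $C$ from $u$ to $v$ disjoint from the interior of $e$; then $\alpha$ contains every vertex and every end of $G$, as $C$ is Hamilton. Set $H_X:=G[X\cup\{u,v\}]-e$ and $H_Y:=G[Y\cup\{u,v\}]-e$, and take their closures $\overline{H_X},\overline{H_Y}$ in $|G|$. The key claim is $\overline{H_X}\cap\overline{H_Y}=\{u,v\}$: the vertex sets and edge interiors of $H_X$ and $H_Y$ are disjoint by definition of components, and every end of $G$ has all its rays in a single component of $G-\{u,v\}$, so no end can lie in both closures. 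Then $\alpha\setminus\{u,v\}$ partitions as a disjoint union of the two sets $(\alpha\cap\overline{H_X})\setminus\{u,v\}$ and $(\alpha\cap\overline{H_Y})\setminus\{u,v\}$, both relatively closed in the open arc $\alpha\setminus\{u,v\}$. Connectedness of this open arc forces one of them to be empty, contradicting the fact that $C$ visits every vertex of both $X$ and $Y$.

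For the reverse implication I would exploit the disk embedding directly. Suppose $e=uv\notin E(C)$; then $e$ is realised as a chord of $D$, and $u,v$ partition $C=\partial D$ into two arcs $\beta_1,\beta_2$. Each $\beta_i$ must contain at least one internal vertex: otherwise, since approaching any end along an arc of $|G|$ requires infinitely many vertices on that arc in the locally finite setting, such a $\beta_i$ would contain no end either, hence would consist of a single edge, which would have to equal $e$ and contradict $e\notin E(C)$. Planarity then forbids any edge of $G$ other than $e$ from joining a vertex on $\beta_1$ to one on $\beta_2$, since such an edge would be forced either to cross $e$ or to traverse $u$ or $v$ internally. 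Thus no edge of $G-\{u,v\}$ links the two sides, and $\{u,v\}$ is a $2$-cut of $G$, completing the proof.

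The principal obstacle I anticipate is the separation argument in the forward implication, in particular the verification that $\overline{H_X}\cap\overline{H_Y}=\{u,v\}$ inside $|G|$. This rests on the fundamental property that every end of a locally finite graph is contained in a unique component of $G-S$ for any finite $S$, a standard consequence of the definition of ends but one that must be invoked explicitly. The remaining ingredients reduce to planar-topological bookkeeping readily supplied by the disk embedding from Theorem~\ref{HC_K_4-K_2,3}.
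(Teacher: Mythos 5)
Your proof is correct, but it takes a genuinely different route from the paper's. The paper handles the forward direction by observing that deleting $e$ from $E(C)$ yields a Hamilton circle of $|G/e|$ and then invoking the fact that Hamiltonian locally finite graphs are $2$-connected (Corollary~\ref{2-con}); for the reverse direction it passes to the finite contraction minors $G_K$ with $K\supseteq N(u)\cup N(v)$ already used in the proof of Theorem~\ref{HC_K_4-K_2,3} and imports Sys\l{}o's finite result (Proposition~\ref{summary}(ii)) that the Hamilton cycle of a finite $2$-connected outerplanar graph consists of its $2$-contractible edges. You instead first translate $2$-contractibility into the assertion that $\{u,v\}$ is not a $2$-cut, prove the forward direction by a direct connectedness argument on the arc $C-\mathring{e}$, and prove the reverse direction by a planarity argument inside the disk embedding supplied by Theorem~\ref{HC_K_4-K_2,3}(ii). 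Your version is more self-contained (no appeal to Sys\l{}o's theorem beyond what Theorem~\ref{HC_K_4-K_2,3} already provides) and sidesteps the need to verify that the closure of $E(C)\setminus\{e\}$ really is a circle in the space $|G/e|$, at the price of more topological bookkeeping; the paper's version is shorter because it recycles machinery already set up.

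One small point to make explicit in your forward implication: as written, the sets $(\alpha\cap\overline{H_X})\setminus\{u,v\}$ and $(\alpha\cap\overline{H_Y})\setminus\{u,v\}$ only cover $\alpha\setminus\{u,v\}$ if $X$ and $Y$ are the \emph{only} components of $G-\{u,v\}$; otherwise the connectedness argument does not force one of them to be empty. This is harmless here --- either observe that a $2$-connected graph in which $\{u,v\}$ leaves three or more components contains a $K_{2,3}$ minor with branch sets $\{u\}$, $\{v\}$ and the components, so at most two components can occur, or simply let $Y$ be the union of all components other than $X$ --- but the covering claim should be justified rather than asserted.
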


We should note here that parts of Theorem~\ref{HC_K_4-K_2,3} and Corollary~\ref{Cor_contr} are already known.
Chan~\cite[Thm.~20 with Thm.~27]{chan} proved that a locally finite $2$-connected graph not isomorphic to $K^3$ and not containing $K^4$ or $K_{2,3}$ as a minor has a Hamilton circle that consists precisely of the $2$-contractible edges of the graph.
He deduces this from other general results about $2$-contractible edges in locally finite $2$-connected graphs.
In our proof, however, we directly construct the Hamilton circle and show its uniqueness without working with $2$-contractible edges.
Afterwards, we deduce Corollary~\ref{Cor_contr}.

Our third result is related to the following conjecture Sheehan made for finite graphs.

\begin{conjecture}\label{sheehan}\cite{sheehan}
There is no finite $r$-regular graph with a unique Hamilton cycle for any $r > 2$.
\end{conjecture}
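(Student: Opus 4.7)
The plan is to attack Conjecture~\ref{sheehan} by a parity argument on Hamilton cycles through a fixed edge, in the spirit of Thomason's lollipop method. Recall Thomason's theorem that in any finite graph in which every vertex has odd degree, each edge lies in an even number of Hamilton cycles. First I would apply this when $r\geq 3$ is odd: suppose $G$ is $r$-regular and contains a Hamilton cycle $C$. Pick any edge $e$ of $C$. Since every vertex of $G$ has the odd degree $r$, the number of Hamilton cycles of $G$ through $e$ is even, hence at least two; so $C$ cannot be the unique Hamilton cycle of~$G$, and the conjecture holds for all odd $r\geq 3$.

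For even $r$ the parity obstruction vanishes: the lollipop exchange no longer forces an even count, and already among $4$-regular graphs there are examples in which some edge lies in an odd number of Hamilton cycles. To push further, I would try to exploit uniqueness directly. If $C$ is the only Hamilton cycle, then for every chord $e\in E(G)\setminus E(C)$ no sequence of rotations or edge-swaps initiated at $e$ may produce a second Hamilton cycle. I would attempt to encode this rigidity as a system of local constraints---one for each chord---and combine them with $r$-regularity to derive a contradiction, perhaps by a counting or discharging argument comparing the number of admissible swaps at a vertex with its degree. Alternatively, one might try to bootstrap the odd case: if $G$ is $r$-regular with $r\geq 4$ even, can one always find a spanning $r'$-regular subgraph with $r'$ odd that inherits the Hamilton cycle $C$? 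A direct construction of such a subgraph seems out of reach in general.

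The main obstacle is that Conjecture~\ref{sheehan} is a long-standing open problem in extremal graph theory, fully resolved only in the odd-$r$ case treated above, with partial progress under strong additional hypotheses (bipartiteness, assumptions on girth, or sufficiently large minimum degree, due to Thomassen and others). A complete proof appears to require a genuinely new idea, so I would not expect to settle the conjecture within this paper; indeed, the authors state it purely as motivation for the question of Mohar on \emph{infinite} regular graphs with a unique Hamilton circle, which is what they actually address constructively in the sequel.
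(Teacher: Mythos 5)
The statement you were asked about is Sheehan's conjecture, which the paper states verbatim as an open problem and does not prove; it only cites partial results (Smith/Tutte for $r=3$, Thomason, Thomassen, Haxell et al.) and then uses the conjecture as motivation for the infinite analogue answered in Theorem~\ref{Q_mohar_Yes}. Your assessment is therefore correct and consistent with the paper: the odd-$r$ case does follow from Thomason's parity theorem exactly as you describe (every edge of an all-odd-degree graph lies in an even number of Hamilton cycles, so a Hamiltonian such graph has at least a second Hamilton cycle, in fact at least three since each of the $r$ edges at a fixed vertex extends), and the even-$r$ case remains open, so no complete proof should be expected here. Your sketched strategies for even $r$ are speculative and would not close the gap, but you correctly flag them as such; there is nothing to correct relative to the paper, which offers no proof at all.
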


\noindent This conjecture is still open, but some partial results have been proved \cite{haxell, thomason, thomassen}.
For $r=3$ the statement of the conjecture was first verified by C.~A.~B.~Smith.
This was noted in an article of Tutte~\cite{tutte} where the statement for $r=3$ was published for the first time.

For infinite graphs Conjecture~\ref{sheehan} is not true in this formulation.
It fails already with $r=3$.
To see this consider the graph depicted in Figure~\ref{double_ladder}, called the \textit{double ladder}.

\begin{figure}[htbp]
\centering
\includegraphics{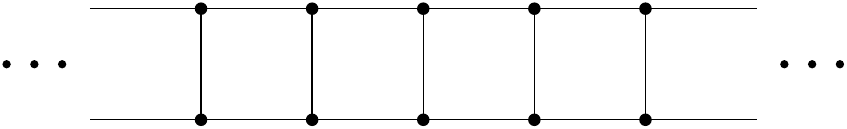}
\caption{The double ladder}
\label{double_ladder}
\end{figure}

\noindent It is easy to check that the double ladder has a unique Hamilton circle, but all vertices have degree $3$.
Mohar has modified the statement of the conjecture and raised the following question.
To state them we need to define two terms.
We define the \textit{vertex-} or \textit{edge-degree} of an end $\omega$ to be the supremum of the number of vertex- or edge-disjoint rays in $\omega$, respectively.
In particular, ends of a graph $G$ can have infinite degree, even if $G$ is locally finite.
\vspace{6pt}

\begin{question}\label{Q2}~\cite{mohar}
Does an infinite graph exist that has a unique Hamilton circle and degree $r > 2$ at every vertex as well as vertex-degree $r$ at every end?
\end{question}

Our result shows that, in contrast to Conjecture~\ref{sheehan} and its known cases, there are infinite graphs having the same degree at every vertex and end while being Hamiltonian in a unique way.

\begin{theorem}\label{Q_mohar_Yes}
There exists an infinite connected graph $G$ with a unique Hamilton circle that has degree $3$ at every vertex and vertex- as well as edge-degree $3$ at every end.
\end{theorem}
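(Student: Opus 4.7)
The plan is to give an explicit construction of an infinite graph $G$ together with a candidate Hamilton circle $C$, and then verify four things: that $G$ is connected and $3$-regular, that each end of $G$ has vertex- and edge-degree $3$, that $C$ is a Hamilton circle of $G$, and that $C$ is the only Hamilton circle of $G$. A natural framework is to use three pairwise vertex-disjoint double rays, the \emph{rails}, which automatically supply three disjoint rays at each of the (two) ends and thereby yield the desired end-degrees. One then inserts a repeating finite \emph{gadget} between consecutive positions along the rails so that the whole graph becomes $3$-regular and so that, locally inside each gadget, only one pattern of edges can belong to a Hamilton circle.

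To pin down $C$ globally I would design the gadgets so that each one contains a $2$-edge-cut of $G$ separating an initial segment from the rest. Every Hamilton circle must contain both edges of any $2$-edge-cut, since a circle meets a finite edge-cut in an even number of edges and any cut separating two vertices of a Hamilton circle is met at least twice. Using infinitely many such cuts, together with a short case analysis inside each gadget, pins down every edge of $C$. The circle $C$ itself is obtained by taking two of the three rails, together with the unique passage through each gadget compatible with the $2$-edge-cuts; its closure in $|G|$ is a topological circle because adding the two ends joins the two double rays at both infinities.

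The main obstacle is uniqueness. Smith's theorem rules out unique Hamilton cycles in finite $3$-regular graphs via a local switching argument (any edge lies in an even number of Hamilton cycles), so the construction must globally prevent all such switches. The approach is to argue that any alternative Hamilton circle $C'$, at the first gadget where it disagrees with $C$, would either violate one of the $2$-edge-cut constraints, leave a vertex of the gadget unvisited, or prematurely close into a finite cycle -- each contradicting that $C'$ is a Hamilton circle. The hypothesis that each end has vertex-degree exactly $3$ also restricts the topological types of circles through an end, ruling out alternatives that pass through the ends in a pattern different from $C$.

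The verification of the end-degrees is comparatively routine: the three rails supply three disjoint rays tending to each end, giving a lower bound of $3$ on both vertex- and edge-degree; and the three rail edges crossing a gadget boundary form a $3$-edge-cut separating a tail of $G$ from the rest, supplying the matching upper bound via the standard equivalence between end-degree and minimum separating edge-cut size.
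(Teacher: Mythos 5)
There are two genuine gaps here, one of omission and one of internal inconsistency. The gap of omission: your entire argument hinges on a ``repeating finite gadget'' inside which ``only one pattern of edges can belong to a Hamilton circle'', but you never exhibit such a gadget, and exhibiting one is where essentially all of the content of this theorem lies. Precisely because of the Smith--Thomason parity phenomenon you yourself cite, through-passages of a finite cubic fragment tend to come in pairs, so a gadget with the required forcing property is a delicate object. The paper's construction uses the fragment $T$ from Tutte's counterexample to Tait's conjecture, and the forcing property it needs -- that $T-u$ has no Hamilton path at all, while $T-r$ has exactly two, both through the edge at $u$, and exactly one of them through a further prescribed pair of edges -- is established by a full page of case analysis (Lemma~\ref{HCs_in_T}). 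Without a concrete gadget and a verification of this kind, the ``short case analysis inside each gadget'' is not a proof step but a placeholder for the whole proof.

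The inconsistency: you cannot simultaneously have three pairwise vertex-disjoint double rays joining your two ends and, inside each gadget, a $2$-edge-cut of $G$ ``separating an initial segment from the rest''. Any finite edge-cut whose two sides each contain a tail of every rail is crossed by each of the three (edge-disjoint) double rays, so it has at least three edges; hence no such $2$-edge-cut exists, and your principal forcing device is unavailable exactly where you need it. (A $2$-edge-cut chopping off a finite piece of a gadget remains possible, but that is not what you describe, and it forces only its own two edges, not a passage between consecutive periods.) If instead you meant the rails to be rays converging to a single end, the graph is one-ended, and -- as Pitz later proved -- every one-ended Hamiltonian cubic graph whose end has edge-degree $3$ admits a second Hamilton circle, so uniqueness is unattainable. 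The paper sidesteps all of this by recursively inserting copies of $T$ into itself, producing a graph with continuum many ends in which every Hamilton circle meets each canonical $3$-edge-cut exactly twice and therefore induces a Hamilton path of $T$ minus one of its three attachment vertices; the non-existence of a Hamilton path in $T-u$ then forces which vertex is omitted, and the two-Hamilton-path analysis of $T-r$ forces the rest. You would need either to adopt a gadget with comparable verified properties or to redesign the skeleton so that your cut-based forcing is compatible with end-degree $3$.
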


\noindent So with Theorem~\ref{Q_mohar_Yes} we answer Question~\ref{Q2} positively and, therefore, disprove the modified version of Conjecture~\ref{sheehan} for infinite graphs in the way Mohar suggested by considering degrees of both, vertices and ends.

The rest of this paper is structured as follows.
In Section~2 we establish all necessary notation and terminology for this the paper.
We also list some lemmas that will serve as auxiliary tools for the proofs of the main theorems.
Section~3 is dedicated to Theorem~\ref{top_catp_HC} where at the beginning of that section we discuss how one can sensibly extend Corollary~\ref{fin_cater_impl} and which problems arise when we try to extend Theorem~\ref{fin_cater} in a similar way.
In Section~4 we present a proof of Theorem~\ref{HC_K_4-K_2,3}.
Afterwards we describe how a different proof of this theorem works which copies the ideas of a proof of statement~(i) of Proposition~\ref{summary}.
We conclude this section by comparing the two proofs.
The last section, Section~5, contains the construction of a graph witnessing Theorem~\ref{Q_mohar_Yes}.

\section{Preliminaries}

When we mention a graph in this paper we always mean an undirected and simple graph.
For basic facts and notation about finite as well as infinite graphs we refer the reader to \cite{diestel_buch}.
For a broader survey about locally finite graphs and a topological approach to them see \cite{diestel_arx}.

Now we list important notions and concepts that we shall need in this paper followed by useful statements about them.
In a graph $G$ with a vertex $v$ we denote by $\delta(v)$ the set of edges incident with $v$ in $G$.
Similarly, for a subgraph $H$ of $G$ or just its vertex set we denote by $\delta(H)$ the set of edges that have only one endvertex in $H$.
Although formally different, we will not always distinguish between a cut $\delta(H)$ and the partition $(V(H), V(G) \setminus V(H))$ it is induced by.
For two vertices $v, w \in V(G)$ let $d_G(v,w)$ denote the distance between $v$ and $w$ in $G$.

We call a finite graph \textit{outerplanar} if it can be embedded in the plane such that all vertices lie on the boundary of a common face.

For a graph $G$ and an integer $k \geq 2$ we define the $k$\textit{-th power} of $G$ as the graph obtained by taking $G$ and adding additional edges $vw$ for any two vertices $v, w \in V(G)$ such that $1 < d_G(v, w) \leq k$.

A tree is called a \textit{caterpillar} if after the deletion of its leaves only a path is left.

We denote by $S(K_{1, 3})$ the graph obtained by taking the star with three leaves $K_{1, 3}$ and subdividing each edge once.

We call a graph \textit{locally finite} if each vertex has finite degree.

A one-way infinite path in a graph $G$ is called a \textit{ray} of $G$, while we call a two-way infinite path in $G$ a \textit{double ray} of $G$.
Every ray contains a unique vertex that has degree $1$ it.
We call this vertex the \textit{start vertex} of the ray.
An equivalence relation can be defined on the set of rays of a graph $G$ by saying that two rays are equivalent if and only if they cannot be separated by finitely many vertices in $G$.
The equivalence classes of this relation are called the \textit{ends} of $G$.
We denote the set of all ends of a graph $G$ by $\Omega(G)$.

The union of a ray $R$ with infinitely many disjoint paths $P_i$ for $i \in \mathbb{N}$ each having precisely one endvertex on $R$ is called a \textit{comb}.
We call the endvertices of the paths $P_i$ that do not lie on $R$ and those vertices $v$ for which there is a $j \in \mathbb{N}$ such that $v = P_j$ the \textit{teeth} of the comb.

The following lemma is a basic tool for infinite graphs.
Especially for locally finite graphs it helps us to get a comb whose teeth lie in a previously fixed infinite set of vertex.

\begin{lemma}\label{star-comb}\cite[Prop.\ 8.2.2]{diestel_buch}
Let $U$ be an infinite set of vertices in a connected graph $G$.
Then $G$ contains either a comb with all teeth in $U$ or a subdivision of an infinite star with all leaves in $U$.
\end{lemma}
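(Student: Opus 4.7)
The plan is to reduce to working in a tree containing $U$ in which every leaf lies in $U$. To obtain this, fix a spanning tree $T'$ of $G$ and let $T$ be the union over all pairs $u_1,u_2 \in U$ of the unique $u_1u_2$-path in $T'$. Then $T$ is a subtree of $T'$ that contains $U$, every leaf of $T$ lies in $U$, and every edge of $T$ lies on some $T'$-path between two vertices of $U$. I then split into two cases depending on whether $T$ has a vertex of infinite degree.

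In the first case, some vertex $v \in V(T)$ has infinite degree in $T$, so $T-v$ has infinitely many components. Each such component $K$ meets $U$: the unique edge $vw$ of $T$ with $w \in K$ lies on a $T'$-path between two vertices of $U$, and at least one endvertex of that path must lie in $K$. Selecting from each component a vertex of $U$ and taking the unique $T$-path from $v$ to it yields infinitely many internally disjoint paths from $v$ into $U$, whose union is a subdivision of an infinite star centred at $v$ with all leaves in $U$.

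In the second case, $T$ is locally finite. Since $U$ is infinite, so is $T$, and I would root $T$ at an arbitrary vertex $r_0$ and recursively construct a ray $R = r_0 r_1 r_2 \ldots$ in $T$ by taking $r_{i+1}$ to be a child of $r_i$ whose subtree still contains infinitely many vertices of $U$; such a child exists because $r_i$ has only finitely many children. Let $S_i$ denote the union of the components of $T - R$ attached at $r_i$. If $U \cap S_i \neq \emptyset$ for infinitely many $i$, then choosing for each such $i$ a vertex $u_i \in U \cap S_i$ together with the unique $T$-path from $r_i$ to $u_i$ produces, jointly with $R$, a comb with all teeth in $U$. Otherwise all but finitely many vertices of $U$ lie on $R$ itself, and $R$, with those $U$-vertices viewed as single-vertex paths, already yields the required comb.

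The main obstacle lies in the locally finite case: one must carefully maintain through the construction of $R$ that infinitely many vertices of $U$ remain reachable below the current vertex, and then recognise that even when most of $U$ ends up on $R$ itself the result is still a comb under the given definition, since the paths $P_j$ are permitted to be single vertices of $R$. The first case is comparatively easy once one observes that the minimality of $T$ forces every component of $T-v$ to meet $U$.
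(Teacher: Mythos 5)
The paper does not actually prove this lemma; it is quoted from Diestel's book, so there is no in-paper argument to compare yours against. Your proof is the standard one: pass to a subtree $T$ of a spanning tree in which every edge lies on a path between two vertices of $U$, and then split on whether $T$ has a vertex of infinite degree. Case~1 (the subdivided star) and the construction of the ray $R$ in Case~2 via the invariant ``the subtree rooted at $r_i$ contains infinitely many vertices of $U$'' are correct, as is your observation that trivial paths are admissible teeth under the paper's definition of a comb.

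There is, however, one false intermediate claim in the final dichotomy of Case~2. From ``$U \cap S_i \neq \emptyset$ for only finitely many $i$'' you conclude that ``all but finitely many vertices of $U$ lie on $R$ itself''. This does not follow: a single $S_i$ may contain infinitely many vertices of $U$. For instance, $T$ could consist of two rays sharing only their common start vertex $r_0$, with $U$ meeting both rays infinitely often; your construction may send $R$ down one of them, and then $S_0$ alone absorbs infinitely many vertices of $U$ while $U \cap S_i = \emptyset$ for all $i \geq 1$. What you actually need --- and what is true --- is only that \emph{infinitely many} vertices of $U$ lie on $R$, and this follows from the invariant you already maintain: if $U \cap S_i = \emptyset$ for all $i > N$, then the infinitely many vertices of $U$ contained in the subtree rooted at $r_{N+1}$ must all be of the form $r_j$ with $j > N$, since that subtree is $\lbrace r_j \; ; \; j \geq N+1 \rbrace \cup \bigcup_{j \geq N+1} S_j$. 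With that justification substituted, $R$ together with the single-vertex paths $\lbrace r_j \rbrace$ for $r_j \in U$ is a comb with all teeth in $U$, and your proof goes through.
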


For a locally finite and connected graph $G$ we can endow $G$ together with its ends with a topology that yields the space $|G|$.
A precise definition of $|G|$ can be found in \cite[Ch.\ 8.5]{diestel_buch}.
Let us point out here that a ray of $G$ converges in $|G|$ to the end of $G$ it is contained in.
Another way of describing $|G|$ is to endow $G$ with the topology of a $1$-complex and then forming the Freudenthal compactification \cite{freud-equi}.

For a point set $X$ in $|G|$, we denote its closure in $|G|$ by $\overline{X}$.
We shall often write $\overline{M}$ for some $M$ that is a set of edges or a subgraph of $G$.
In this case we implicitly assume to first identify $M$ with the set of points in $|G|$ which corresponds to the edges and vertices that are contained in $M$.

We call a subspace $Z$ of $|G|$ \textit{standard} if $Z = \overline{H}$ for a subgraph $H$ of $G$.

A \textit{circle} in $|G|$ is the image of a homeomorphism having the unit circle $S^1$ in $\mathbb{R}^2$ as domain and mapping into $|G|$.
Note that all finite cycles of a locally finite connected graph $G$ correspond to circles in $|G|$, but there might also be infinite subgraphs $H$ of $G$ such that $\overline{H}$ is a circle in $|G|$.
Similar to finite graphs we call a locally finite connected graph $G$ \textit{Hamiltonian} if there exists a circle in $|G|$ which contains all vertices of $G$.
Such circles are called \textit{Hamilton circles} of $G$.

We call the image of a homeomorphism with the closed real unit interval $[0, 1]$ as domain and mapping into $|G|$ an \textit{arc} in $|G|$.
Given an arc $\alpha$ in $|G|$, we call a point $x$ of $|G|$ an \textit{endpoint} of $\alpha$ if $0$ or $1$ is mapped to $x$ by the homeomorphism defining~$\alpha$.
If the endpoint of an arc corresponds to a vertex of the graph, we also call the endpoint an \textit{endvertex} of the arc.
Similarly as for paths, we call an arc an $x$--$y$ arc if $x$ and $y$ are the endpoints of the arc.
Possibly the simplest example of a nontrivial arc is a ray together with the end it converges to.
However, the structure of arcs is more complicated in general and they might contain up to $2^{\aleph_0}$ many ends.
We call a subspace $X$ of $|G|$ \textit{arc-connected} if for any two points $x$ and $y$ of $X$ there is an $x$--$y$ arc in $X$.

Using the notions of circles and arc-connectedness we now extend trees in a similar topological way.
We call an arc-connected standard subspace of $|G|$ a \textit{topological tree} if it does not contain any circle.
Note that, similar as for finite trees, for any two points $x, y$ of a topological tree there is a unique $x$--$y$ arc in that topological tree.
Generalizing the definition of caterpillars, we call a topological tree $\overline{T}$ in $|G|$ a \textit{topological caterpillar} if $\overline{T-L}$ is an arc, where $T$ is a forest in $G$ and $L$ denotes the set of all leaves of $T$, i.e., vertices of degree $1$ in~$T$.

Now let $\omega$ be an end of a locally finite connected graph $G$.
We define the \textit{vertex-} or \textit{edge-degree of $\omega$ in $G$} as the supremum of the number of vertex- or edge-disjoint rays in $G$, respectively, which are contained in $\omega$.
By this definition ends may have infinite vertex- or edge-degree.
Similarly, we define the \textit{vertex-} or \textit{edge-degree of $\omega$ in a standard subspace $X$ of $|G|$} as the supremum of vertex- or edge-disjoint arcs in $X$, respectively, that have $\omega$ as an endpoint.
We should mention here that the supremum is actually an attained maximum in both definitions.
Furthermore, when we consider the whole space $|G|$ as a standard subspace of itself, the vertex-degree in $G$ of any end $\omega$ of $G$ coincides with the vertex-degree in $|G|$ of $\omega$.
The same holds for the edge-degree.
The proofs of these statements are nontrivial and since it is enough for us to work with the supremum, we will not go into detail here.

We make one last definition with respect to end degrees which allows us to distinguish the parity of degrees of ends when they are infinite.
The idea of this definition is due to Bruhn and Stein~\cite{circle}.
We call the vertex- or edge-degree of an end~$\omega$ of $G$ in a standard subspace $X$ of $|G|$ \textit{even} if there is a finite set $S \subseteq V(G)$ such that for every finite set $S' \subseteq V(G)$ with $S \subseteq S'$ the maximum number of vertex- or edge-disjoint arcs in $X$, respectively, with $\omega$ as endpoint and some $s \in S'$ is even. Otherwise, we call the vertex- or edge-degree of $\omega$ in~$X$, respectively,~\textit{odd}.

Next we collect some useful statements about the space $|G|$ for a locally finite connected graph $G$.

\begin{proposition}\label{compact}\cite[Prop.\ 8.5.1]{diestel_buch}
If $G$ is a locally finite connected graph, then $|G|$ is a compact Hausdorff space.
\end{proposition}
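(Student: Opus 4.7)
The plan is to handle the Hausdorff and compactness properties independently.

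For the Hausdorff property, I would argue by case analysis on the types of two distinct points $x, y \in |G|$. If both lie in $G$ viewed as a $1$-complex, disjoint neighborhoods come directly from that topology. If $x \in G$ and $y = \omega$ is an end, a small $1$-complex neighborhood $N$ of $x$ is contained in a finite subgraph, and choosing $S \subseteq V(G)$ finite and large enough to separate $N$ from the rays of $\omega$ gives a basic open neighborhood $\hat C(S, \omega)$ of $\omega$ disjoint from $N$. If $x = \omega_1$ and $y = \omega_2$ are distinct ends, the very definition of ends supplies a finite $S \subseteq V(G)$ separating representative rays, whence $\hat C(S, \omega_1)$ and $\hat C(S, \omega_2)$ are automatically disjoint.

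For compactness, I would start from an arbitrary open cover $\mathcal U$. For each end $\omega$, pick $U_\omega \in \mathcal U$ containing $\omega$ and refine to a basic open neighborhood $\hat C(S_\omega, \omega) \subseteq U_\omega$ with $S_\omega \subseteq V(G)$ finite. The crucial claim is that finitely many such basic neighborhoods already cover the whole end set. I would prove this by contradiction: fix an exhaustion $V_1 \subseteq V_2 \subseteq \cdots$ of $V(G)$ by finite sets, and if no finite subfamily suffices, pick ends $\omega^{(n)}$ missed by the first $n$ chosen basic neighborhoods. Each $\omega^{(n)}$ selects a unique component of $G - V_n$; since $G$ is locally finite there are only finitely many choices at each level, so König's Infinity Lemma furnishes a coherent branch through the tree of component-choices, which determines a limit end $\omega^*$. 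Then $\hat C(S_{\omega^*}, \omega^*)$ is one of the chosen basic neighborhoods and must contain $\omega^{(n)}$ for all large $n$, a contradiction.

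Fix then $\omega_1, \dots, \omega_k$ so that $\bigcup_i \hat C(S_{\omega_i}, \omega_i)$ contains every end, and let $K := |G| \setminus \bigcup_i \hat C(S_{\omega_i}, \omega_i)$. This $K$ is closed and contains no end. If $K$ met infinitely many vertices, Lemma~\ref{star-comb} applied to $K \cap V(G)$ in $G$ would produce (since local finiteness forbids a subdivided infinite star with all leaves in $K$) a comb whose teeth lie in $K$; the spine of this comb then defines an end in the closure of $K$, contradicting $K$ being closed and end-free. Hence $K$ lies in a finite subgraph of $G$, which is compact in the $1$-complex topology and so admits a cover by finitely many members of $\mathcal U$. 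Combined with $U_{\omega_1}, \dots, U_{\omega_k}$, this yields the desired finite subcover. The main obstacle will be the König-style step establishing that finitely many basic end-neighborhoods cover all ends; the Hausdorff part is routine, and the final passage from ``no end in $K$'' to ``$K$ contained in a finite subgraph'' is a standard comb argument powered by local finiteness.
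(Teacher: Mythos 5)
The Hausdorff argument and your treatment of the leftover part of $|G|$ once the ends are covered (the star--comb argument showing that a closed, end-free set meeting infinitely many vertices is impossible in a locally finite graph) are both sound. The gap is in the K\"onig step, which is the heart of the matter: the paper itself only cites this proposition, and the proof in the cited source handles exactly this point differently. You choose the ends $\omega^{(n)}$ greedily, so $\omega^{(n)}$ is only required to avoid $\hat C(S_{\omega^{(m)}},\omega^{(m)})$ for the finitely many \emph{previously chosen} ends $\omega^{(m)}$ with $m<n$. K\"onig's lemma then produces a limit end $\omega^*$, but nothing forces $\omega^*$ to be one of the $\omega^{(m)}$: a sequence of distinct ends can perfectly well ``converge'' (in the sense of eventually sharing each component $C_k$ of $G-V_k$ along a subsequence) to an end outside the sequence --- think of a tree whose isolated-looking ends accumulate at a limit end. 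In that case the observation that $\hat C(S_{\omega^*},\omega^*)$ contains infinitely many $\omega^{(n)}$ contradicts nothing, because no $\omega^{(n)}$ was ever required to avoid the neighbourhood of $\omega^*$. Trying instead to find $m<n$ in the convergent subsequence with $\omega^{(n)}\in\hat C(S_{\omega^{(m)}},\omega^{(m)})$ also fails in general, since the least $k$ with $S_{\omega^{(m)}}\subseteq V_k$ may be far larger than the level at which $\omega^{(m)}$ has joined the common branch, so one cannot guarantee $C_k\subseteq C(S_{\omega^{(m)}},\omega^{(m)})$ for any usable pair $m<n$.

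The standard repair, and essentially the proof in Diestel's book, is to run K\"onig's lemma on \emph{components} rather than on a sequence of ends: call a component $C$ of $G-V_n$ \emph{bad} if the set of ends living in $C$ is not covered by finitely many members of your family. If no finite subfamily covers all ends, every level has a bad component, every bad component of $G-V_{n+1}$ lies in a bad component of $G-V_n$, and every bad component has a bad child (it has only finitely many children and every end living in it lives in exactly one of them). K\"onig then yields a nested branch $C_1\supseteq C_2\supseteq\cdots$ of bad components determining an end $\omega^*$; choosing $k$ with $S_{\omega^*}\subseteq V_k$ gives $C_k\subseteq C(S_{\omega^*},\omega^*)$, so the single set $\hat C(S_{\omega^*},\omega^*)$ covers every end living in $C_k$, contradicting the badness of $C_k$. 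With this replacement your argument goes through; the only other point to make explicit is that the uncovered set $K$ avoids all but finitely many edges as well as all but finitely many vertices (an edge joining two distinct components $C(S_{\omega_i},\omega_i)$ and $C(S_{\omega_j},\omega_j)$ must have both endpoints in the finite set $\bigcup_i S_{\omega_i}$), so that $K$ really is a closed subset of a compact finite subcomplex.
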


Having Proposition~\ref{compact} in mind the following basic lemma helps us to work with continuous maps and to verify homeomorphisms, for example when considering circles or arcs.

\begin{lemma}\label{invers_cont}
Let $X$ be a compact space, $Y$ be a Hausdorff space and $f: X \longrightarrow Y$ be a continuous injection. Then $f^{-1}$ is continuous too.
\end{lemma}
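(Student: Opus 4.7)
The plan is to reduce the statement to the standard fact that a continuous bijection from a compact space onto a Hausdorff space is automatically a homeomorphism, and then to verify this fact by showing that $f$ is a closed map. Since $f$ is injective, $f$ restricts to a bijection $f: X \longrightarrow f(X)$; as $Y$ is Hausdorff, so is the subspace $f(X)$, and continuity of $f^{-1}$ as a map into $X$ is equivalent to continuity of $f^{-1}: f(X) \longrightarrow X$. The latter is equivalent to $f$ being a closed map from $X$ onto $f(X)$, because preimages under $f^{-1}$ are images under $f$.

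So the task becomes: take an arbitrary closed subset $C \subseteq X$ and prove that $f(C)$ is closed in $f(X)$. For this I would use two elementary facts from general topology. First, closed subsets of a compact space are compact, so $C$ is compact. Second, continuous images of compact sets are compact, hence $f(C)$ is compact in $Y$. Finally, compact subsets of Hausdorff spaces are closed, so $f(C)$ is closed in $Y$, and therefore also closed in $f(X)$ in the subspace topology. This shows that $f$ is closed, concluding the proof.

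There is not really a serious obstacle here: the argument is a textbook chain of three standard compactness/Hausdorff implications (closed in compact $\Rightarrow$ compact; compact $\mapsto$ compact under continuous maps; compact in Hausdorff $\Rightarrow$ closed). The only minor point to be careful about is to observe at the start that, although the statement formally asks about $f^{-1}$ without specifying its domain, injectivity lets us regard $f^{-1}$ as defined on $f(X)$, and the result as stated is usually applied in this form.
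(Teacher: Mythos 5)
Your proof is correct; it is the standard textbook argument (closed in compact $\Rightarrow$ compact, continuous image of compact is compact, compact in Hausdorff $\Rightarrow$ closed, hence $f$ is a closed map onto $f(X)$). The paper states this lemma without any proof, treating it as a known fact from general topology, so there is nothing to compare against --- your write-up supplies exactly the argument the author implicitly relies on.
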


The following lemma tells us an important combinatorial property of arcs.
To state the lemma more easily, let $\mathring{F}$ denote the set of inner points of edges $e \in F$ in $|G|$ for an edge set $F \subseteq E(G)$.

\begin{lemma}\label{jumping-arc}\cite[Lemma 8.5.3]{diestel_buch}
Let $G$ be a locally finite connected graph and \linebreak ${F \subseteq E(G)}$ be a cut with sides $V_1$ and $V_2$.
\textnormal{
\begin{enumerate}[\normalfont(i)]
\item \textit{If $F$ is finite, then $\overline{V_1} \cap \overline{V_2} = \emptyset$, and there is no arc in $|G| \setminus \mathring{F}$ with one endpoint in $V_1$ and the other in $V_2$.}
\item \textit{If $F$ is infinite, then $\overline{V_1} \cap \overline{V_2} \neq \emptyset$, and there may be such an arc.}
\end{enumerate}
}
\end{lemma}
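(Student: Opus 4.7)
Plan of proof. I would treat the two cases separately, using Proposition~\ref{compact} and the end structure of $|G|$.

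For part~(i), let $S$ denote the (finite) set of endpoints of edges of $F$. Since every edge of $F$ has one endpoint on each side, removing $S$ from $G$ kills every edge of $F$, so each component of $G - S$ lies entirely in $V_1$ or entirely in $V_2$. Consequently, for any end $\omega$ and any finite $S' \supseteq S$, the component of $G - S'$ carrying a tail of each ray of $\omega$ sits on one side only, and this side is independent of $S'$. Hence $\overline{V_1}$, which consists of $V_1$ together with the ends on the $V_1$-side, is disjoint from $\overline{V_2}$. For the arc claim I would set $A := \overline{G[V_1]}$ and $B := \overline{G[V_2]}$; a basic neighbourhood analysis based on the same separating set shows that $A$ and $B$ are closed in $|G|$, mutually disjoint, and that $A \cup B = |G| \setminus \mathring{F}$. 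Hence $A$ and $B$ are clopen in the subspace $|G| \setminus \mathring{F}$, forcing every arc in that subspace to sit inside one of them; since $V_1 \subseteq A$ and $V_2 \subseteq B$, no arc with endpoints on both sides can exist.

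For part~(ii), I would first produce a point of $\overline{V_1} \cap \overline{V_2}$ by compactness. Local finiteness of $G$ together with infinitude of $F$ makes $V_1 \cap V(F)$ infinite; pick an injective sequence $(u_n)$ there, and (using local finiteness again on the $V_2$ side) distinct $v_n \in V_2$ with $u_nv_n \in F$. By Proposition~\ref{compact} some subsequence of $(u_n)$ converges to a point $p \in |G|$. Local finiteness excludes $p$ being a vertex (basic neighbourhoods of a vertex contain no other vertex) or an inner point of an edge (small neighbourhoods of such a point contain no vertex), so $p$ must be an end $\omega$, which immediately gives $\omega \in \overline{V_1}$. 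The key step is that along the same subsequence $(v_n)$ converges to the \emph{same} end: for every finite $S \subseteq V(G)$, cofinitely many $u_n$ lie in the component $C$ of $G - S$ assigned to $\omega$, and since $u_nv_n \in E(G)$ the vertex $v_n$ then lies in $C \cup S$; finiteness of $S$ pushes cofinitely many $v_n$ into $C$, so $v_n \to \omega$ and hence $\omega \in \overline{V_2}$. For the ``may'' assertion it suffices to give one example: in the double ladder of Figure~\ref{double_ladder} with $F$ the set of rungs, the two rails form double rays sharing the same pair of ends, so an arc can run from a vertex of one rail along the rail to an end and back along the other rail to a vertex on the opposite side of $F$, never passing through a rung.

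The main obstacle will be the end-convergence step in~(ii): upgrading convergence of $(u_n)$ to convergence of $(v_n)$ to the \emph{same} end rests on the combinatorial observation that every edge of $G$ keeps its two endpoints in the same component of $G - S$ for any finite $S$ not meeting the edge. The remaining work is essentially packaging compactness with elementary separation in the $1$-complex topology of~$|G|$.
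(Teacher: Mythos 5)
Your argument is correct and follows the standard route (the paper itself only cites this lemma from Diestel's book, so there is no in-paper proof to diverge from): in~(i) the finite set $S$ of endvertices of $F$ separates the two sides end-wise, and the clopen decomposition of $|G|\setminus\mathring{F}$ into $\overline{G[V_1]}$ and $\overline{G[V_2]}$ kills any crossing arc by connectedness; in~(ii) the compactness extraction, the exclusion of vertices and inner edge points as limits, and the transfer of convergence from $(u_n)$ to $(v_n)$ via ``adjacent vertices lie in the same component of $G-S$ unless one is in $S$'' are all sound, with the distinctness of the $v_n$ (guaranteed by local finiteness) correctly invoked to discard the finitely many exceptions landing in $S$. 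The double-ladder example with $F$ the set of rungs does witness the ``may'' clause, since the two rails share both ends and a ray of each rail together with a common end forms an arc avoiding $\mathring{F}$.
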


The next lemma ensures that connectedness and arc-connectedness are equivalent for the spaces we are mostly interested in, namely standard subspaces, which are closed by definition.

\begin{lemma}\label{arc_conn}\cite[Thm.\ 2.6]{path-cyc-tree}
If $G$ is a locally finite connected graph, then every closed topologically connected subset of $|G|$ is arc-connected.
\end{lemma}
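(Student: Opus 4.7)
The plan is to use compactness of $|G|$ together with a Zornification argument to extract a minimal connected subspace joining the two given points, and then show that minimality forces it to be an arc. Fix a closed topologically connected $X \subseteq |G|$ and two distinct points $x, y \in X$. Consider the family
\[
\mathcal{F} = \{\,Y \subseteq X : Y \text{ is closed and topologically connected, and } \{x,y\} \subseteq Y\,\},
\]
partially ordered by inclusion. This family is nonempty since $X \in \mathcal{F}$.

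First I would verify that $\mathcal{F}$ has a minimal element by Zorn's lemma applied downward. For any chain $(Y_i)_{i \in I}$ in $\mathcal{F}$, let $Z = \bigcap_{i \in I} Y_i$. Then $Z$ is closed, contains $x$ and $y$, and by the standard fact that nested intersections of closed connected subsets in a compact Hausdorff space remain connected -- which applies here thanks to Proposition~\ref{compact} -- the set $Z$ is topologically connected. Hence $Z \in \mathcal{F}$ is a lower bound for the chain, and Zorn gives a minimal $Y \in \mathcal{F}$.

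The heart of the proof is then to show that $Y$ is an $x$--$y$ arc. I would argue this by establishing two structural facts about $Y$. First, $Y$ is locally connected: every point of $|G|$ has a neighborhood basis of open sets that meet $Y$ in sets with only finitely many topological components, using the standard basis for $|G|$ consisting of small star-shaped neighborhoods around vertices, open intervals inside edges, and sets of the form $\hat{C}(S,\omega)$ around each end $\omega$ for finite $S \subseteq V(G)$; Lemma~\ref{jumping-arc} controls which of these components are actually adhered to from outside. After discarding the components of each such intersection that do not contribute to connecting $x$ to $y$ (which is justified by minimality), local connectedness follows. Second, $Y$ is irreducible between $x$ and $y$: no proper closed connected subset of $Y$ contains both $x$ and $y$, which is precisely minimality in $\mathcal{F}$. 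By the Hahn-Mazurkiewicz theorem $Y$ is a Peano continuum, i.e.\ a continuous image of $[0,1]$; combined with irreducibility, a classical theorem on continua (an irreducible locally connected continuum between two points is an arc) upgrades this to a homeomorphism $[0,1] \to Y$ with $0 \mapsto x$ and $1 \mapsto y$.

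The step I expect to be the main obstacle is the verification that $Y$ is locally connected at its ends. Ends behave topologically unlike interior points of edges or vertices, and checking that neighborhood bases of ends intersect $Y$ in finitely many components -- possibly after using minimality to prune -- requires careful use of Lemma~\ref{jumping-arc} together with Lemma~\ref{star-comb} (to ensure that if a neighborhood $\hat{C}(S,\omega) \cap Y$ had infinitely many components, one could extract a comb and thereby a closed proper connected subset of $Y$ still joining $x$ to $y$, contradicting minimality). Everything else is either standard point-set topology of compact Hausdorff spaces or direct bookkeeping inside the basis of $|G|$.
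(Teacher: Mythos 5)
First, a point of order: the paper does not prove this lemma at all --- it is quoted from Diestel and K\"uhn \cite{path-cyc-tree}, whose argument is quite different from yours (a topological $x$--$y$ path is constructed directly as a limit of paths through finite contractions of $G$, and an arc is then extracted from that path), so there is no in-paper proof to measure you against. Judged on its own terms, your reduction is sound up to a point: the Zorn step is correct, since a nested family of continua in the compact Hausdorff space $|G|$ (Proposition~\ref{compact}) has connected intersection, and the final step is also correct, since a locally connected metric continuum is arcwise connected, so a minimal $Y$ would have to coincide with any $x$--$y$ arc it contains.

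The genuine gap is the claim that the minimal $Y$ is locally connected. This is \emph{not} a consequence of minimality: the topologist's sine curve is a continuum irreducible between $(1,\sin 1)$ and a point of its limit segment, it sits inside the Peano continuum $[0,1]^2$, and it is not locally connected. So local connectedness of $Y$ must be wrung out of the specific structure of $|G|$, and that is precisely where your argument stops being a proof. ``Discarding the components that do not contribute'' is not licensed by minimality: the components of the open set $\hat C(S,\omega)\cap Y$ need not be open in $Y$ (that is exactly what is in question), a union of some of them need not be closed in $Y$ (they may accumulate at $\omega$ or at the finitely many frontier points of the neighbourhood), so deleting them need not leave a member of $\mathcal F$; and the comb produced by Lemma~\ref{star-comb} lives in the graph $G$, not inside $Y$, so it does not hand you a smaller closed connected subset of $Y$. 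Moreover, you only address ends, but local connectedness can a priori also fail at vertices and at inner points of edges: nothing you say rules out that $Y$ meets the interior of a single edge in a Cantor set whose pieces are reconnected elsewhere in the graph. Excluding all of these degeneracies for a minimal $Y$ is essentially the entire content of the lemma, and it is left unproven.
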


We continue in the spirit of Lemma~\ref{jumping-arc} by characterising important topological properties of the space $|G|$ in terms of combinatorial ones.
The following lemma deals with arc-connected subspaces.
It will be convenient for us to use this in a proof later on.

\begin{lemma}\label{top_conn}\cite[Lemma 8.5.5]{diestel_buch}
If $G$ is a locally finite connected graph, then a standard subspace of $|G|$ is topologically connected (equivalently: arc-connected) if and only if it contains an edge from every finite cut of $G$ of which it meets both sides.
\end{lemma}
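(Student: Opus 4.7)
The statement has two implications, and the parenthetical equivalence of topological connectedness with arc-connectedness is supplied by Lemma~\ref{arc_conn} applied to the closed subspace $X=\overline{H}$. So it suffices to treat topological connectedness.

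For the forward direction I would argue the contrapositive. Suppose $F$ is a finite cut with sides $V_1,V_2$ such that $X$ meets both sides yet $H$ contains no edge of $F$. Then $X$ contains no inner point of any edge of $F$, and so $X\subseteq\overline{V_1}\cup\overline{V_2}$. Applied to the finite cut $F$, Lemma~\ref{jumping-arc}(i) yields $\overline{V_1}\cap\overline{V_2}=\emptyset$. Hence $X=(X\cap\overline{V_1})\sqcup(X\cap\overline{V_2})$ is a partition of $X$ into two disjoint closed subsets, both nonempty because $X$ meets both sides, and $X$ is not topologically connected.

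For the backward direction I would again argue the contrapositive. Assume $X=A\sqcup B$ where $A$ and $B$ are disjoint, nonempty, and closed in $X$, hence closed and compact in the compact Hausdorff space $|G|$ by Proposition~\ref{compact}. The heart of the proof is to produce a finite vertex set $S\subseteq V(G)$ such that no component of $G-S$ contains vertices of both $A$ and $B$. To this end I would cover the compact set $A$ by finitely many basic open sets of $|G|$, each disjoint from the closed set $B$, and cover $B$ analogously. A basic open set of $|G|$ around an end $\omega$ is determined by a finite vertex set $S_\omega$ and consists of the component of $G-S_\omega$ containing the rays of $\omega$ together with its ends and incident half-edges; basic neighborhoods of vertices and of inner edge points involve only one or two vertices of $G$. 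Let $S$ be the union over the two chosen finite covers of all these associated vertices. Then $S$ is finite. For any vertex $a\in A\setminus S$, the basic neighborhood from the cover containing $a$ must be of end-type (a vertex-star of $a$ would have forced $a\in S$, and open sub-arcs of edges contain no vertex), and since $S$ contains the corresponding $S_\omega$, the whole component of $G-S$ containing $a$ lies inside that end-neighborhood and is therefore disjoint from $B$; symmetrically for $B$-vertices.

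With $S$ in hand I would set $V_1:=(S\cap A)\cup\bigcup\{C:C\text{ is a component of }G-S\text{ meeting }A\}$ and $V_2:=V(G)\setminus V_1$. The cut $F=\delta(V_1)$ consists only of edges incident with a vertex of $S$, so it is finite by local finiteness. The subspace $X$ meets both $V_1$ and $V_2$: every vertex or inner edge point in $A$ yields a vertex of $V(H)\cap A\subseteq V_1$, and every end in $A$ is caught by one of the chosen end-type neighborhoods whose component meets $H$ (since the end lies in $\overline{H}$), producing such a vertex; the symmetric argument handles $B$. Finally, no edge of $H$ crosses $F$: each $H$-edge is a connected subset of $X=A\sqcup B$, so it lies entirely in $A$ or entirely in $B$, and both of its endpoints therefore end up on the same side of the cut. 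The most delicate point is the construction of $S$, where one must ensure that every $A$- or $B$-vertex outside $S$ is trapped in an end-type basic neighborhood rather than a vertex- or edge-type one, so that no ``mixed'' component of $G-S$ survives; once the purity of components is secured, the remaining cut verifications are routine.
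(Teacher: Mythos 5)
The paper offers no proof of this lemma; it is imported as \cite[Lemma 8.5.5]{diestel_buch}, so there is no in-paper argument to compare against. Your proposal is a correct reconstruction of the standard proof, and both halves are structured the right way: the forward direction via Lemma~\ref{jumping-arc}(i), the backward direction via compactness of the two clopen parts $A$ and $B$ in $|G|$ (Proposition~\ref{compact}), a finite separator $S$ harvested from finitely many basic open sets, and the induced finite cut. One imprecision in the forward direction: from ``$X$ contains no inner point of any edge of $F$'' it does not follow that $X\subseteq\overline{V_1}\cup\overline{V_2}$, since $\overline{V_i}$ is the closure of the \emph{vertex set} $V_i$ and contains no inner edge points at all, whereas $X$ contains the inner points of all edges of $H$ (each of which is spanned within one side, as $H$ misses $F$). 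The fix is immediate: replace $\overline{V_i}$ by the closure of the induced subgraph $G[V_i]$; these two sets are closed, they cover $X$, and they are still disjoint because their intersection consists only of ends and hence lies in $\overline{V_1}\cap\overline{V_2}=\emptyset$. The backward direction, which is the delicate half, you execute correctly: the classification of basic open neighbourhoods forces every vertex of $A\setminus S$ (resp.\ $B\setminus S$) into an end-type neighbourhood whose entire component of $G-S$ avoids $B$ (resp.\ $A$); the cut $\delta(V_1)$ is finite because all its edges meet the finite set $S$ and $G$ is locally finite; each closed edge of $H$ is a connected subset of $X$ and so lies wholly in $A$ or wholly in $B$, whence $H$ avoids the cut; and the case where a part of the disconnection consists solely of ends is correctly handled by extracting a vertex of $H$ from an end-type neighbourhood, since such an end lies in $\overline{H}$.
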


The next theorem is actually part of a bigger one containing more equivalent statements.
Since we shall need only one equivalence, we reduced it to the following formulation.
For us it will be helpful to check or at least bound the degree of an end in a standard subspace just by looking at finite cuts instead of dealing with the homeomorphisms that actually define the relevant arcs.

\begin{theorem}\label{cycspace}\cite[Thm.\ 2.5]{diestel_arx}
Let $G$ be a locally finite connected graph. Then the following are equivalent for $D \subseteq E(G)$:
\begin{enumerate}[\normalfont(i)]
\item $D$ meets every finite cut in an even number of edges.
\item Every vertex of $G$ has even degree in $\overline{D}$ and every end of $G$ has even edge-degree in $\overline{D}$.
\end{enumerate}
\end{theorem}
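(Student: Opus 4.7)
The plan is to prove the two implications separately, leveraging that local finiteness of $G$ makes the star $\delta(v)$ and each component-boundary $\delta(C(S,\omega))$ (where $C(S,\omega)$ denotes the component of $G-S$ containing a ray of $\omega$) into finite cuts.

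For (i) $\Rightarrow$ (ii), I would use a contraction trick. For each finite $S \subseteq V(G)$, let $G_S$ be the finite multigraph obtained from $G$ by contracting every component of $G - S$ to a single vertex, and let $D_S$ be the image of $D$ in $G_S$. Every cut of $G_S$ lifts to a finite cut of $G$, so by (i) $D_S$ meets every cut of $G_S$ in an even number of edges; for a finite multigraph this is equivalent to every vertex of $G_S$ having even $D_S$-degree. Reading the degrees off yields simultaneously that $\deg_{\overline{D}}(v)$ is even for every $v \in S$ (giving the vertex part of (ii) since $S$ is arbitrary) and that $|D \cap \delta(C)|$ is even for every component $C$ of $G - S$. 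For the end part, fix an end $\omega$: the numbers $|D \cap \delta(C(S,\omega))|$ are even for every finite $S$, and by a standard Menger-type argument combined with the fact that the supremum defining the edge-degree is attained (Section~2), these even cut sizes control the parity of the maximum number of edge-disjoint arcs in $\overline{D}$ from $\omega$ to $S$, which is exactly what the parity-definition of an even end edge-degree requires.

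For (ii) $\Rightarrow$ (i), let $F$ be a finite cut with sides $V_1,V_2$. If $V_1$ is finite, the identity
\[
\sum_{v\in V_1}\deg_{\overline{D}}(v) = 2\,|D\cap E(G[V_1])| + |D\cap F|
\]
and (ii) make the left-hand side even, hence $|D\cap F|$ is even. If $V_1$ is infinite, I would fix a finite $S\subseteq V(G)$ containing all endpoints of edges in $F$ (so that $F\subseteq E(G[S])$) and apply the analogous count inside $W_1:=V_1\cap S$ to obtain
\[
|D\cap F| \equiv \sum_{\substack{C\text{ comp.\ of }G-S\\ C\subseteq V_1}} |D\cap\delta(C)| \pmod{2}.
\]
Only finitely many components $C$ contribute (by local finiteness of $G$ together with the finiteness of $S$), and finite components give even contributions by the same degree-sum trick. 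The task therefore reduces to showing that $|D\cap\delta(C)|$ is even for every \emph{infinite} component $C\subseteq V_1$ of $G-S$.

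This last step is the main obstacle. My plan is to enlarge $S$ by finite increments: for each enlargement $S\subsetneq S'$, a double count of $D$-degrees at the newly included vertices (all even by (ii)) shows that $\sum_{C'}|D\cap\delta(C')|$, taken over the refined sub-components of $C$ in $G-S'$, has the same parity as $|D\cap\delta(C)|$, so parity is preserved. Iterating, each remaining infinite sub-component can be driven to hug an end of $G$ tightly enough that its cut size coincides with the (even) edge-degree of that end via the Menger-type identification used in the first direction. The delicate point is that an infinite component may contain infinitely many ends, so no single finite enlargement of $S$ can isolate them all; this will be handled by a compactness argument in $|G|$ (via Proposition~\ref{compact}) extracting a finite collection of ends whose witnessing vertex sets suffice, with the parity-preservation of each refinement guaranteeing that the conclusion survives the limit.
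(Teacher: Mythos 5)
First, note that the paper does not prove this statement at all: it is quoted as Theorem~2.5 of Diestel's survey \cite{diestel_arx} (originally due to Bruhn and Stein \cite{circle}) and used as a black box, so there is no in-paper proof to compare against. Judged on its own, your proposal has the right architecture --- the contraction trick and the degree-sum/parity bookkeeping over finite vertex sets are exactly how the known proof is organised, and the reduction of (ii)~$\Rightarrow$~(i) to the infinite components of $G-S$ is correct --- but it has a genuine gap at the point where all the real work of the theorem lives. Both of your directions rest on the claim that, for an end $\omega$ and a finite $S'$, the parity of the maximum number of edge-disjoint arcs in $\overline{D}$ from $\omega$ to $S'$ agrees with the parity of $\lvert D\cap\delta(C(S',\omega))\rvert$. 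The upper bound (each such arc must traverse an edge of $D\cap\delta(C(S',\omega))$, by Lemma~\ref{jumping-arc}) is easy, but an inequality in one direction says nothing about parity: if the maximum number of arcs were strictly smaller than the cut size, it could have either parity. What you actually need is a matching lower bound, i.e.\ the construction of $\lvert D\cap\delta(C(S',\omega))\rvert$ edge-disjoint $\omega$--$S'$ arcs inside $\overline{D}$ (under the standing parity hypotheses). This is not ``a standard Menger-type argument'': arcs in $|G|$ are genuinely topological objects, an arc towards an end may have to pass through continuum many ends, and producing edge-disjoint arcs in the closure of an arbitrary edge set $D$ requires either the structure theory of the topological cycle space (decomposition into edge-disjoint circuits) or a dedicated compactness/inverse-limit construction. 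This lemma is the technical core of the Bruhn--Stein result, and your proposal assumes it rather than proves it.

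The final compactness step of (ii)~$\Rightarrow$~(i) is also only gestured at, and it hides a second instance of the same issue. After enlarging $S$ to $S'=S\cup\bigcup_i S_{\omega_i}$ using finitely many ends $\omega_1,\dots,\omega_k$ extracted by compactness of $\Omega(G)\cap\overline{C}$, an infinite component $C'$ of $G-S'$ need not be of the form $C(S_{\omega_i},\omega_i)$ and may still contain many ends other than the $\omega_i$; merely knowing that each such end individually has even edge-degree in $\overline{D}$ does not yet yield that $\lvert D\cap\delta(C')\rvert$ is even without again invoking the arc-versus-cut correspondence for a component that is not ``tight'' around a single end. So the skeleton is right, but to turn this into a proof you must state and prove the lemma identifying (the parity of) the edge-degree of $\omega$ in $\overline{D}$ with (the parity of) the stabilised cut sizes $\lvert D\cap\delta(C(S',\omega))\rvert$; as written, both implications are circular at exactly that point.
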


The following lemma gives us a nice combinatorial description of circles and will be especially useful in combination with Theorem~\ref{cycspace} and Lemma~\ref{top_conn}.

\begin{lemma}\label{circ}\cite[Prop.\ 3]{circle}
Let $C$ be a subgraph of a locally finite connected graph~$G$. Then $\overline{C}$ is a circle if and only if $\overline{C}$ is topologically connected, every vertex in $\overline{C}$ has degree $2$ in $\overline{C}$ and every end of $G$ contained in $\overline{C}$ has edge-degree $2$ in $\overline{C}$.
\end{lemma}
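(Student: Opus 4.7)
For the forward direction I would verify the three properties by transferring the local structure of $S^1$ through a homeomorphism $\varphi\colon S^1\to\overline{C}$. Topological connectedness is preserved by continuity. For an arbitrary point $p\in\overline{C}$, a small open arc around $\varphi^{-1}(p)$ maps to a neighborhood of $p$ in which $p$ is approached from exactly two sides; at a vertex this is precisely degree $2$ in $C$, and at an end this yields two edge-disjoint arc-tails with endpoint $\omega$, so edge-degree $2$.

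For the backward direction, my plan is to show that $\overline{C}$ is a compact, connected, Hausdorff $1$-manifold without boundary, and then to invoke the classification of (second countable) connected $1$-manifolds to conclude $\overline{C}\cong S^1$. Compactness and the Hausdorff property are inherited from $|G|$ via Proposition~\ref{compact}, and topological connectedness is part of the hypothesis. The remaining task is to exhibit around every point of $\overline{C}$ an open neighborhood homeomorphic to $\mathbb{R}$. This is immediate at inner points of edges, and at a vertex $v\in V(C)$ the hypothesis provides exactly two incident edges of $C$, so small subintervals of these two edges together with $v$ give an interval-like neighborhood. Note also that the degree-$2$ condition at vertices forces $C$ itself to be $2$-regular, so $C$ decomposes as a disjoint union of finite cycles and double rays.

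The main obstacle is the local chart at an end $\omega\in\overline{C}$. Here I would use the edge-degree $2$ condition together with Theorem~\ref{cycspace} (which forces $E(C)$ to meet every finite cut in an even number of edges) to find arbitrarily small basic open neighborhoods $U$ of $\omega$ in $|G|$ such that only two edges of $C$ cross $\delta(U)$; these two edges must lie on the two double rays of $C$ converging to $\omega$. Lemma~\ref{top_conn} then rules out that $C$ has any further component contained in $U$: such a component would be separated from $\omega$ inside $\overline{C}$ by a finite cut that meets $C$ trivially, contradicting the topological connectedness of $\overline{C}$. Hence $U\cap\overline{C}$ consists of $\omega$ together with two ray-tails and is homeomorphic to an open interval around $\omega$, furnishing the missing chart and completing the identification $\overline{C}\cong S^1$.
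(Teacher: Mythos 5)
The paper does not prove this lemma at all: it is imported verbatim as \cite[Prop.~3]{circle} from Bruhn and Stein, so there is no in-paper argument to compare yours against. Judged on its own, your forward direction is fine. The backward direction, however, has a genuine gap exactly where the real difficulty of the statement lies, namely in producing the local chart at an end $\omega\in\overline{C}$.

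Your picture of $\overline{C}$ near $\omega$ --- ``two graph-theoretic ray-tails of $C$ converging to $\omega$, plus $\omega$'' --- is too naive. In general $\overline{C}$ may contain infinitely many (even uncountably many) ends, and these may accumulate at $\omega$; the two edges of $C$ crossing a small cut $\delta(U)$ then belong to rays that converge to \emph{other} ends inside $U$, from which $\overline{C}$ continues along further double rays of $C$, and so on. Thus $C\cap G[U]$ typically has many graph-theoretic components inside $U$, and your appeal to Lemma~\ref{top_conn} does not exclude them: for an infinite component $K$ the cut $\delta(V(K))$ need not be finite, and connectivity of $\overline{C}$ may legitimately be carried through shared ends rather than through edges. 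What you actually need is that the two pieces of $\overline{C}\cap\overline{U}$ hanging off the two crossing edges are themselves \emph{arcs} ending at $\omega$ --- but proving that a connected standard subspace with all degrees $2$ decomposes locally into arcs is essentially the whole content of the lemma, so the chart construction is circular as written. (A second, smaller gap: deducing from ``edge-degree $2$'' that some finite cut around $\omega$ meets $E(C)$ in exactly two edges uses that the supremum in the end-degree definition is attained, a Menger-type fact the paper explicitly flags as nontrivial.) The standard proof instead observes that, by Theorem~\ref{cycspace}, $E(C)$ lies in the topological cycle space, invokes the decomposition theorem that every such edge set is a disjoint union of circuits, and then uses the degree-$2$ conditions together with connectedness to show the decomposition consists of a single circuit; if you want a self-contained argument you should follow that route rather than the classification of $1$-manifolds.
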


A basic fact about finite Hamiltonian graphs is that they are always $2$-connected.
For locally finite connected graphs this is also a well-known fact, although it has not separately been published.
Since we shall need this fact later and can easily deduce it from the lemmas above, we include a proof here.

\begin{corollary}\label{2-con}
Every locally finite connected Hamiltonian graph is $2$-connected.
\end{corollary}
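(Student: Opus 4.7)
The plan is to argue by contradiction. Suppose $G$ is a locally finite connected graph with a Hamilton circle $C$, and suppose $G$ is not $2$-connected. Since any Hamilton circle is a homeomorphic image of $S^1$, a finite Hamiltonian graph has at least three vertices in the usual sense, and in the infinite case this is automatic; hence the only way $G$ can fail to be $2$-connected is to possess a cut vertex $v$. I would fix such a $v$ and write $V(G)\setminus\{v\}=A\cup B$ with $A,B$ both nonempty and no edge between $A$ and $B$. Then every edge with exactly one endpoint in $A$ runs to $v$, so the cut $F:=\delta(A)$ is contained in $\delta(v)$ and in particular is finite; the same holds for $\delta(B)$.

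The core of the proof is to apply the topological characterisations of circles. By Lemma~\ref{circ} every vertex of $G$ has degree $2$ in $\overline{C}$ and every end of $G$ has edge-degree $2$ in $\overline{C}$. Theorem~\ref{cycspace} then yields that $E(C)$ meets every finite cut in an even number of edges; in particular $|E(C)\cap F|$ is even. Since $F\subseteq \delta(v)$ and $v$ has exactly two incident edges on $C$, we get $|E(C)\cap F|\in\{0,2\}$. If $|E(C)\cap F|=0$, then $\overline{C}$ is a standard subspace that is topologically connected and meets both sides of the finite cut $F$ (because it contains the nonempty sets $A$ and $B\cup\{v\}$) but contains no edge of $F$, contradicting Lemma~\ref{top_conn}. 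If $|E(C)\cap F|=2$, then both of $v$'s neighbours on $C$ lie in $A$, and hence $E(C)$ contains no edge of the finite cut $\delta(B)\subseteq\delta(v)$; applying Lemma~\ref{top_conn} to $\delta(B)$ now gives the same contradiction.

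I do not anticipate a real obstacle here: the only point that requires care is making sure the cuts one invokes are finite, so that Lemmas~\ref{top_conn} and~\ref{cycspace} actually apply. This is exactly what local finiteness together with the cut-vertex assumption buys, since both $\delta(A)$ and $\delta(B)$ are contained in $\delta(v)$.
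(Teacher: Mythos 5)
Your proof is correct and uses exactly the same ingredients as the paper's (Lemma~\ref{circ}, Theorem~\ref{cycspace} and Lemma~\ref{top_conn} applied to the finite cuts around the components of $G-v$); the only difference is that you start from the degree-$2$ condition at $v$ and case-split on $|E(C)\cap\delta(A)|$, whereas the paper argues in the opposite direction, deducing that $C$ must use at least two edges of each of $\delta(K_1)$ and $\delta(K_2)$ and hence that $v$ would have degree at least $4$ in $C$. This is an inessential reorganization of the same argument.
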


\begin{proof}
Let $G$ be a locally finite connected Hamiltonian graph and suppose for a contradiction that it is not $2$-connected.
Fix a subgraph $C$ of $G$ whose closure $\overline{C}$ is a Hamilton circle of $G$ and a cut vertex $v$ of $G$.
Let $K_1$ and $K_2$ be two different components of $G-v$.
By Theorem~\ref{cycspace} the circle $\overline{C}$ uses evenly many edges of each of the finite cuts $\delta(K_1)$ and $\delta(K_2)$.
Since $\overline{C}$ is a Hamilton circle and, therefore, topologically connected, we also get that it uses at least two edges of each of these cuts by Lemma~\ref{top_conn}.
This implies that $v$ has degree at least $4$ in $C$, which contradicts Lemma~\ref{circ}.
\end{proof}

\section{Topological caterpillars}

In this section we close a gap with respect to the general question of when the $k$-th power of a graph has a Hamilton circle.
Let us begin by summarizing the results in this field.
We start with finite graphs.
The first result to mention is the famous theorem of Fleischner, Theorem~\ref{fin_fleisch}, which deals with $2$-connected graphs.

For higher powers of graphs the following theorem captures the whole situation.

\begin{theorem}\label{fin_3-con}\cite{karaganis, sekanina}
The cube of any finite connected graph on at least three vertices is Hamiltonian.
\end{theorem}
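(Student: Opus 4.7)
The plan is to reduce the statement to the case of a spanning tree, and then to prove by induction on $|V(T)|$ a strengthened Hamilton-connectivity statement for the cube of a tree.

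For the reduction, let $G$ be a finite connected graph on $n \geq 3$ vertices and fix any spanning tree $T$ of $G$. Since every $T$-path is also a $G$-path, we have $d_G(x,y) \leq d_T(x,y)$ for all $x,y \in V(G)$, and hence $T^3$ is a spanning subgraph of $G^3$. It therefore suffices to produce a Hamilton cycle in $T^3$.

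For the tree case, I would prove by induction on $|V(T)|$ the following strengthening: for every tree $T$ with at least two vertices and every pair of distinct vertices $u,v \in V(T)$, the cube $T^3$ contains a Hamilton $u$-$v$ path whose first edge (at $u$) is an edge of $T$ itself. The base case $|V(T)|=2$ is immediate. For the induction step, pick a leaf $\ell$ of $T$ with neighbour $w$ and apply the hypothesis to $T':=T-\ell$. If $\ell=u$, apply the hypothesis to $T'$ with the pair $(w,v)$ and prepend the tree edge $uw$ to the resulting Hamilton path to get the required Hamilton path in $T^3$; the case $\ell=v$ is symmetric. Otherwise $\ell \notin \{u,v\}$; apply the hypothesis to $T'$ with $(u,v)$ to obtain a Hamilton $u$-$v$ path $P'$ of $(T')^3$, and insert $\ell$ next to $w$: if some neighbour $a$ of $w$ in $P'$ satisfies $d_{T'}(a,w)\leq 2$, then replacing the edge $wa$ of $P'$ by the detour $w\ell a$ yields a Hamilton $u$-$v$ path in $T^3$ (the first edge at $u$ is untouched, so it still lies in $T$).

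The main obstacle is the subcase where both neighbours of $w$ in $P'$ lie at $T'$-distance exactly $3$ from $w$, so that the naive insertion of $\ell$ fails, together with the corner case where $w \in \{u,v\}$. Overcoming these is the heart of the argument; I expect to either strengthen the inductive claim further (for instance by imposing an additional distance condition at $w$, or by insisting that both the first and last edges of the Hamilton path be tree edges), or else pick $\ell$ canonically as an endpoint of a diametral path of $T$, so that enough room is available near $w$ for a more elaborate rerouting. Once the strengthened claim is established, the Hamilton cycle in $G^3$ follows at once: pick any edge $uv$ of the spanning tree $T$, invoke the claim to obtain a Hamilton $u$-$v$ path $P$ of $T^3$, and close $P$ with the edge $uv$ itself to obtain the desired Hamilton cycle in $T^3 \subseteq G^3$.
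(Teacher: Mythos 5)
Your reduction to a spanning tree is fine, but the strengthened inductive claim you propose is false, and the induction is left incomplete at exactly the point where it would have to do real work. For the falsity: take $T$ to be the path $a\,b\,c$, with $u=a$ and $v=b$. The only Hamilton $a$--$b$ path in $T^3$ is $a,c,b$, whose first edge $ac$ satisfies $d_T(a,c)=2$ and is therefore not an edge of $T$. So ``for every pair of distinct vertices $u,v$ there is a Hamilton $u$--$v$ path in $T^3$ whose first edge at $u$ is a tree edge'' cannot serve as an induction hypothesis. (Hamilton-connectedness of $T^3$ is in fact true, but it requires a different argument, and you do not need anything that strong here.) Independently of this, you explicitly leave open the subcase in which both neighbours of $w$ on $P'$ lie at $T'$-distance $3$ from $w$, as well as the corner case $w\in\{u,v\}$; these are precisely the situations in which naive leaf insertion genuinely fails, so the heart of the argument is missing.

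The statement is cited in the paper (to Karaganis and Sekanina) without proof, and the standard argument proves a weaker claim by a structurally different induction: for every tree $T$ on at least two vertices and every \emph{edge} $xy\in E(T)$, the cube $T^3$ contains a Hamilton $x$--$y$ path. In the induction step one deletes the edge $xy$ (rather than a leaf), obtaining subtrees $T_x\ni x$ and $T_y\ni y$. By induction applied inside $T_x$ to an edge $xx'$ (or trivially if $T_x$ is a single vertex, in which case set $x'=x$) one gets a Hamilton path of $T_x^3$ from $x$ to a neighbour $x'$ of $x$, and similarly a Hamilton path of $T_y^3$ from some $y'$ with $d_T(y,y')\le 1$ to $y$; since $d_T(x',y')\le 3$, the two paths concatenate through the edge $x'y'$ of $T^3$ into a Hamilton $x$--$y$ path. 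Closing this path with the edge $xy$ yields the Hamilton cycle, where $|V(G)|\ge 3$ guarantees that this is a genuine cycle. I would recommend replacing your leaf-deletion induction by this edge-deletion induction; it avoids all of the problematic subcases at once.
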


These theorems leave the question whether and when one can weaken the assumption of being $2$-connected and still maintain the property of being Hamiltonian.
Theorem~\ref{fin_cater} gives an answer to this question.

Now let us turn our attention towards locally finite infinite graphs.
As mentioned in the introduction, Georgakopoulos has completely generalized Theorem~\ref{fin_fleisch} to locally finite graphs by proving Theorem~\ref{inf-fleisch}.
Furthermore, he also gave a complete generalization of Theorem~\ref{fin_3-con} to locally finite graphs with the following theorem.

\begin{theorem}\label{inf-3-con}\cite[Thm.\ 5]{agelos}
The cube of any locally finite connected graph on at least three vertices is Hamiltonian.
\end{theorem}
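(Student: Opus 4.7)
The plan is to reduce the theorem to locally finite trees via a normal spanning tree, and then to construct a Hamilton circle in the cube of such a tree by a recursive depth-first traversal that uses the ends of the tree as turning points for infinite branches.

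First, I would let $T$ be a normal spanning tree of $G$ rooted at some vertex~$r$; such a tree exists because every locally finite connected graph is countable, and normal spanning trees are end-faithful, so the natural map $\Omega(T)\to\Omega(G)$ is a bijection. Since $E(T^3)\subseteq E(G^3)$, the inclusion $|T^3|\hookrightarrow|G^3|$ is continuous. Thus, by Lemma~\ref{invers_cont}, a Hamilton circle of $|T^3|$, being the image of $S^1$ under a homeomorphism, is carried injectively and continuously into the compact Hausdorff space $|G^3|$, so its image there is still a circle and still contains every vertex of~$G$. It therefore suffices to prove the theorem when $G=T$ is an infinite locally finite tree, the finite case being covered by Theorem~\ref{fin_3-con}.

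For the tree case, I would root $T$ at $r$ and recursively build a cyclic sequence $\sigma$ of all vertices and ends of $T$ such that each vertex appears exactly once, each end appears as a single turning point, and consecutive vertices are at tree-distance at most~$3$. At a vertex $v$ with children $c_1,\dots,c_k$ the traversal of $T_v$ enters near $v$, visits sub-traversals of $T_{c_1},\dots,T_{c_k}$ in order, and leaves near $v$ again; the sub-traversal of each $T_{c_i}$ is arranged to start and end within tree-distance~$2$ of $v$, so the transitions between consecutive subtrees use edges of $T^3$. If $T_{c_i}$ is infinite its sub-traversal cannot close off at a vertex; instead I would have it go out along a ray tending to a chosen end $\omega_i\in\Omega(T_{c_i})$ and return along a different ray of the same $\omega_i$, placing $\omega_i$ once in $\sigma$ as a turning point contributing edge-degree~$2$. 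Setting $C\subseteq E(T^3)$ to be the set of edges joining consecutive vertices of $\sigma$, Theorem~\ref{cycspace}, Lemma~\ref{top_conn}, and Lemma~\ref{circ} together would confirm that $\overline{C}$ is a Hamilton circle of $|T^3|$: every vertex has degree~$2$, every end has edge-degree~$2$, and $\overline{C}$ is topologically connected.

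The hard part will be the end matching. Within an infinite subtree many ends are available, and the recursion must commit, before fully exploring that subtree, to a particular end serving as its turning point. One then has to ensure globally that this end is approached from exactly two sides, that no deeper recursion inside the subtree routes further arcs through the same end, and that no end of $T$ is missed, so that every end ends up with edge-degree exactly~$2$. Organising this bookkeeping consistently, especially at internal vertices where many infinite branches meet, is the delicate combinatorial core of the argument and is what Georgakopoulos's proof in~\cite{agelos} builds on top of Sekanina's and Karaganis's finite constructions.
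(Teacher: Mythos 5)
The paper does not prove this theorem -- it is quoted from Georgakopoulos~\cite{agelos} -- so I can only judge your argument on its own terms. Your overall strategy (reduce to an end-faithful spanning tree, then traverse the tree's cube recursively, using ends as turning points) is the standard and correct outline, and your reduction step is essentially sound: a normal spanning tree $T$ exists since $G$ is countable, $\Omega(T)\to\Omega(G)$ is a bijection, taking powers of a locally finite graph does not change its ends, and then compactness of $|T^3|$ together with Lemma~\ref{invers_cont} turns the natural injection $|T^3|\to|G^3|$ into a homeomorphism onto a standard subspace, so a Hamilton circle of $|T^3|$ yields one of $|G^3|$. One concrete slip already appears in your traversal invariant, though: if the sub-traversal of $T_{c_i}$ ends at a vertex at tree-distance $2$ from $v$ and that of $T_{c_{i+1}}$ starts at a vertex at tree-distance $2$ from $v$, the connecting jump has length up to $4$, since the tree path between them must pass through $v$. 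The Sekanina--Karaganis bookkeeping is asymmetric for exactly this reason: each subtree is entered \emph{at} $c_{i+1}$ (distance $1$ from $v$) and left at a neighbour of $c_i$ inside $T_{c_i}$ (distance $2$ from $v$), giving jumps of length at most $3$. As stated, your invariant does not guarantee that the transition edges lie in $T^3$.

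The more serious problem is that the heart of the proof is missing, and you say so yourself. For an infinite subtree the ``sub-traversal'' is not an object you have defined: it can only arise as a limit of finite approximations (or as an explicitly described edge set), and the whole difficulty is to show that this limit is an arc and that the union of all pieces is a circle. Note that Theorem~\ref{cycspace} only gives you \emph{even} edge-degree at every end; to conclude edge-degree exactly $2$ you need an a priori upper bound, which here should come from an invariant of the form ``for every vertex $w$ the constructed edge set meets the finite cut $\delta_{T^3}(V(T_w))$ in exactly two edges'', together with the observation that for each end $\omega$ these cuts, taken along the normal ray to $\omega$, converge to $\{\omega\}$. Maintaining that invariant through the recursion (in particular at vertices where several infinite branches meet, and for ends that are never ``chosen'' as a turning point at any finite stage but are only approached in the limit) is precisely the content of the theorem; deferring it to ``what Georgakopoulos's proof builds on top of'' means the proposal is a plausible plan rather than a proof.
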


What is left and what we do in the rest of this section is to prove lemmas about locally finite graphs covering implications similar to those in Theorem~\ref{fin_cater}, and mainly Theorem~\ref{top_catp_HC}, which extends Corollary~\ref{fin_cater_impl} to locally finite graphs.

Let us first consider a naive way of extending Theorem~\ref{fin_cater} and Corollary~\ref{fin_cater_impl} to locally finite graphs.
Since we consider spanning caterpillars for Corollary~\ref{fin_cater_impl}, we need a definition of these objects in infinite graphs that allows them to contain infinitely many vertices.
So let us modify the definition of caterpillars as follows:
A locally finite tree is called a \textit{caterpillar} if after deleting its leaves only a finite path, a ray or a double ray is left.
Using this definition Theorem~\ref{fin_cater} remains true for locally finite infinite trees $T$ and Hamilton circles in $|T^2|$.
The same proof as the one Harary and Schwenk~\cite[Thm.\ 1]{cat_HC} gave for Theorem~\ref{fin_cater} in finite graphs can be used to show this.

Corollary~\ref{fin_cater_impl} remains also true for locally finite graphs using this adapted definition of caterpillars.
Its proof, however, is no trivial deduction anymore.
The problem is that for a spanning tree $T$ of a locally finite connected graph $G$ the topological spaces $|T^2|$ and $|G^2|$ might differ not only in inner points of edges but also in ends.
More precisely, there might be two equivalent rays in $G^2$ that belong to different ends of $T^2$.
So the Hamiltonicity of $T^2$ does not directly imply the one of $G^2$.
However, for $T$ being a spanning caterpillar of $G$, this problem can only occur when $T$ contains a double ray such that all subrays belong to the same end of $G$.
Then the same construction as in the proof for the implication from~(iii) to (i) of Theorem~\ref{fin_cater} can be used to build a spanning double ray in $T^2$ which is also a Hamilton circle in $|G^2|$.
The idea for the construction which is used for this implication is covered in Lemma~\ref{order}.

The downside of this naive extension is the following.
For a locally finite infinite graph the assumption of having a spanning caterpillar is quite restrictive.
Such graphs can especially have at most two ends since having three ends would imply that the spanning caterpillar must contain three disjoint rays.
This, however, is impossible because it would force the caterpillar to contain a $S(K_{1, 3})$.
For this reason we have defined a topological version of caterpillars, namely topological caterpillars.
Their definition allows graphs with arbitrary many ends to have a spanning topological caterpillar.
Furthermore, it yields with Theorem~\ref{top_catp_HC} a more relevant extension of Corollary~\ref{fin_cater_impl} to locally finite graphs.

We briefly recall the definition of topological caterpillars.
Let $G$ be a locally finite connected graph.
A topological tree $\overline{T}$ in $|G|$ is a topological caterpillar if $\overline{T-L}$ is an arc, where $T$ is a forest in $G$ and $L$ denotes the set of all leaves of $T$, i.e., vertices of degree $1$ in $T$.

The following basic lemma about topological caterpillars is easy to show and so we omit its proof.
It is an analogue of the equivalence of the statements~(ii) and~(iii) of Theorem~\ref{fin_cater} for topological caterpillars.

\begin{lemma}\label{_inf_forbd_sub}
Let $G$ be a locally finite connected graph.
A topological tree $\overline{T}$ in $|G|$ is a topological caterpillar if and only if $T$ does not contain $S(K_{1,3})$ as a subgraph and all ends of $G$ have vertex-degree in $\overline{T}$ at most $2$.
\end{lemma}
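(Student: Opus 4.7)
To prove the forward implication, suppose $\overline{T}$ is a topological caterpillar, so $\overline{T - L}$ is an arc. If $T$ contained $S(K_{1,3})$ with centre $v$, neighbours $u_1, u_2, u_3$ of $v$, and further vertices $w_1, w_2, w_3$ adjacent to $u_1, u_2, u_3$, then each $u_i$ would have $T$-degree at least two and so lie in $T - L$; similarly $v \in T - L$. But then $v$ would have three incident edges of $T - L$, producing three vertex-disjoint sub-arcs ending at $v$ in the arc $\overline{T - L}$, which is impossible. If some end $\omega$ of $G$ had vertex-degree at least three in $\overline{T}$, let $\alpha_1, \alpha_2, \alpha_3$ be vertex-disjoint arcs in $\overline{T}$ ending at $\omega$. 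Each $\alpha_i$ must, in any sufficiently small neighbourhood of $\omega$, traverse a ray of $T$ converging to $\omega$, and the interior vertices of any such ray have $T$-degree at least two, hence lie in $T - L$. So the restrictions of the $\alpha_i$ to a small enough neighbourhood of $\omega$ are vertex-disjoint arcs in $\overline{T - L}$ ending at $\omega$, contradicting that $\overline{T - L}$ is an arc.

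For the converse, assume $T$ avoids $S(K_{1,3})$ and every end of $G$ has vertex-degree at most two in $\overline{T}$. First, $T - L$ has maximum degree two: otherwise some $v \in T - L$ would have three $T - L$-neighbours $u_1, u_2, u_3$, and picking a further $T$-neighbour $w_i \neq v$ of each $u_i$ (which exists because $u_i \notin L$) would produce an $S(K_{1,3})$ in $T$, with all seven vertices distinct since $u_1, u_2, u_3$ lie in pairwise different components of $T - v$. Hence every component of $T - L$ is a finite path, a ray or a double ray. Moreover, $\overline{T - L}$ is arc-connected: the unique arc in the topological tree $\overline{T}$ between any two of its points cannot visit a leaf $\ell$ as an interior point, as $\ell$ has only one neighbour in $\overline{T}$ and such a visit would force the arc to repeat a vertex.

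It remains to show that $\overline{T - L}$, being compact (closed in $|G|$), arc-connected, circle-free (inherited from $\overline{T}$), and having vertex-degree at most two at every vertex and every end, is an arc. For this I would verify that each point of $\overline{T - L}$ admits a neighbourhood in $\overline{T - L}$ homeomorphic to an interval: inner points of edges and vertices of $T - L$-degree two to an open interval, vertices of $T - L$-degree one to a half-open one, and an end $\omega$ of vertex-degree $k \in \{1, 2\}$ in $\overline{T - L}$ to an interval formed by the $k$ disjoint ray-tails converging to $\omega$ together with $\omega$ itself. Hence $\overline{T - L}$ is a compact, connected $1$-manifold with boundary, and not being homeomorphic to $S^1$ (since it contains no circle) it must be homeomorphic to $[0, 1]$, i.e., an arc. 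The main obstacle is constructing the chart at an end $\omega$: one has to use both the $\overline{T}$-vertex-degree bound at $\omega$ and the maximum-degree-two structure of $T - L$ to exclude further ray-tails or branchings in any neighbourhood of $\omega$, so that the two (or one) tails really furnish a genuine $1$-dimensional chart; this step is in spirit an arc-analogue of Lemma~\ref{circ}.
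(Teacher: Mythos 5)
The paper omits its own proof of this lemma, so I can only judge your argument on its own terms. Your forward implication and the reduction in the converse are essentially correct: the two $S(K_{1,3})$ arguments work, and the key observation that an interior vertex of an arc in $\overline{T}$ must be incident with two edges of $T$ (hence is not a leaf of $T$) both turns your three vertex-disjoint arcs at an end of $\overline{T}$-vertex-degree at least $3$ into three vertex-disjoint arcs of $\overline{T-L}$ ending at that end, and shows that the unique arc in $\overline{T}$ between two points of $\overline{T-L}$ avoids leaves and leaf-edges, giving arc-connectedness of $\overline{T-L}$. (Your phrase that each $\alpha_i$ must ``traverse a ray of $T$ converging to $\omega$'' is neither needed nor literally true; the degree-$\geq 2$ observation already suffices.)

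The genuine gap is in the final step, and your proposed chart at an end rests on a false local picture. You describe a neighbourhood of an end $\omega$ of vertex-degree $k$ in $\overline{T-L}$ as ``the $k$ disjoint ray-tails converging to $\omega$ together with $\omega$ itself''. But the components of $T-L$ are finite paths, rays or double rays, and in precisely the situations this lemma is designed for --- spanning topological caterpillars of graphs with infinitely many ends --- an end $\omega \in \overline{T-L}$ is typically not the limit of any ray of $T-L$: every neighbourhood of $\omega$ meets infinitely many distinct \emph{finite} components of $T-L$, so near $\omega$ the space $\overline{T-L}$ is an accumulation of finite path-segments and further ends. There are no ray-tails to assemble into a chart, and the difficulty is not ``excluding further ray-tails or branchings'' but the absence of your local model altogether. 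A completion that does work argues globally rather than locally: show that a maximal arc $A$ in $X := \overline{T-L}$ must equal $X$, since for $z \in X \setminus A$ the $z$--$A$ arc in $X$ first meets the closed set $A$ in a point $p$, and $p$ cannot be an interior point of $A$ (a vertex would acquire degree at least $3$, an inner point of an edge cannot carry three branches, and an end would acquire three vertex-disjoint arcs, all of which your degree bounds forbid); hence $p$ is an endpoint of $A$ and $A$ extends. The remaining work is to show this extension process terminates in a closed arc, using compactness of $|G|$; alternatively one can prove and invoke the arc-analogue of Lemma~\ref{circ}. As written, your manifold-chart route does not go through.
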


Before we completely turn towards the preparation of the proof of Theorem~\ref{top_catp_HC} let us consider statement~(i) of Theorem~\ref{fin_cater} again.
A complete extension of Theorem~\ref{fin_cater} to locally finite graphs using topological caterpillars seems impossible because of statement~(i).
To see this we should first make precise what the adapted version of statement~(i) most possibly should be.
In order to state it let $G$ denote a locally finite connected graph and let $\overline{T}$ be a topological tree in $|G|$.
Now the formulation of the adapted statement should be as follows:
\vspace{6pt}
\textnormal{
\begin{enumerate}[\normalfont(i*)]
\item \textit{In the subspace $\overline{T^2}$ of $|G^2|$ is a circle containing all vertices of $T$.}
\end{enumerate}
}

\vspace{6pt}

\noindent This statement does not hold if $T$ has more than one graph theoretical component.
Therefore, it cannot be equivalent to $\overline{T}$ being a topological caterpillar in $|G|$, which is the adapted version of statement~(iii) of Theorem~\ref{fin_cater} for locally finite graphs.
Note that any two vertices of $T$ lie in the same graph theoretical component of $T$ if and only if they lie in the same graph theoretical component of~$T^2$.
Hence, we can deduce that statement~(i*) fails if $T$ has more than one graph theoretical component from the following claim.

\begin{claim}
Let $G$ be a locally finite connected graph and let $\overline{T}$ be a topological tree in~$|G|$.
Then there is no circle in the subspace $\overline{T^2}$ of $|G^2|$ that contains vertices from different graph theoretical components of $T^2$.
\end{claim}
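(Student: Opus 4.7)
The plan is to argue by contradiction. Suppose a circle $C$ in $\overline{T^2}$ contains vertices $v_1\in V(K_1)$ and $v_2\in V(K_2)$ belonging to distinct graph theoretical components $K_1,K_2$ of $T^2$. Because every $T^2$-edge has both endvertices in a single component of $T$, the graph theoretical components of $T^2$ coincide with those of $T$, so $K_1$ and $K_2$ are also distinct components of $T$. Writing $C$ as the union of two $v_1$--$v_2$ arcs $\alpha,\beta$ in $\overline{T^2}$ whose interiors are disjoint, I aim to produce an end of $G$ that lies in the interior of both arcs, contradicting their internal disjointness.

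First I would establish the structural fact that any two distinct components $K,K'$ of $T$ satisfy $|\overline K\cap\overline{K'}\cap\Omega(G)|\le 1$. Otherwise, two distinct ends $\omega,\omega'$ in $\overline K\cap\overline{K'}$ together with rays in $K$ (resp.\ $K'$) converging to $\omega$ and to $\omega'$, joined by $K$-paths (resp.\ $K'$-paths) between their start vertices, would give two $\omega$--$\omega'$ arcs in $\overline T$ that meet only at their endpoints (since $K\cap K'=\emptyset$), and hence a circle in $\overline T$, contradicting the topological tree property. The same kind of argument applied to longer cyclic sequences of components and ends shows that the bipartite incidence graph $\mathcal B$, whose vertex classes are the components of $T$ and the ends of $G$ contained in $\overline T$, with edges $K\omega$ precisely when $\omega\in\overline K$, is acyclic; the arc-connectedness of $\overline T$ then makes $\mathcal B$ a tree. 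I would also note in passing that the ends appearing in $\overline{T^2}$ coincide with those in $\overline T$: replacing each chord of a given $T^2$-ray by the length-two $T$-path through the $T$-midpoint produces an infinite $T$-walk inside a single component of $T$, to which Lemma~\ref{star-comb} may be applied to extract a $T$-ray in the same end of $G$.

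The heart of the proof is to track each of $\alpha$ and $\beta$ through the components of $T$. For a component $K$, every $T^2$-edge at a vertex of $K$ has its other endvertex in $K$ too, so the local $1$-complex structure of $\overline{T^2}$ at a vertex $v$ of $K$ forces any arc through $v$ to remain in $\overline{K^2}$ on a neighbourhood of the preimage of $v$. Consequently, every transition of $\alpha$ from the closed set $\overline{K^2}$ into some other $\overline{K'^2}$ must occur at an end of $G$ that lies in $\overline K\cap\overline{K'}$. This associates to $\alpha$ a walk in $\mathcal B$ from $K_1$ to $K_2$; injectivity of $\alpha$ forbids reusing an end, and, since $\mathcal B$ is a tree, revisiting a component would force either a reused end or a cycle in $\mathcal B$. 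Hence the walk is the unique $K_1$--$K_2$ path in $\mathcal B$, and the same conclusion applies to $\beta$. Since $K_1\neq K_2$, this path traverses at least one end $\omega$, which must therefore lie in the interior of both $\alpha$ and $\beta$, yielding the desired contradiction.

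The part I expect to require the most care is the tracing step above: one must verify that the family of closed preimages $\alpha^{-1}(\overline{K^2})$, as $K$ ranges over the components of $T$, decomposes $[0,1]$ into a sequence of closed intervals glued at the preimages of the transition ends, so that the induced walk in $\mathcal B$ is genuinely combinatorial. Given the pairwise near-disjointness of the sets $\overline{K^2}$ inside $\overline{T^2}$ and the compactness of $\alpha$, this should reduce to a routine compactness argument.
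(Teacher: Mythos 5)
Your preliminary reductions are fine: the components of $T^2$ do coincide with those of $T$, the ends of $\overline{T^2}$ do correspond to those of $\overline{T}$, and two distinct components of $T$ do share at most one end in their closures (your circle argument for this is correct). The intended contradiction --- an end forced into the interior of both halves of the circle --- is also a legitimate target. The proof breaks down at the tracing step, and the problem is not the deferred ``routine compactness argument'' but two structural assumptions that are simply false: (a) that the incidence graph $\mathcal{B}$ is connected, hence a tree, and (b) that an arc in $\overline{T^2}$ passes through the sets $\overline{K^2}$ in a discrete succession of closed intervals glued at single ``transition ends'', so that it induces a finite walk in $\mathcal{B}$. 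An arc in $|G|$ may contain $2^{\aleph_0}$ ends arranged like a Cantor set; a topological tree $\overline{T}$ realising this consists of the double rays corresponding to the complementary intervals, which are then densely ordered components of $T$, no two of which share an end in their closures, while the ends that are not endpoints of complementary intervals lie in $\overline{K}$ for no component $K$ at all (for a connected subgraph $K$ one has $\omega\in\overline{K}$ only if $K$ contains an $\omega$-ray, by Lemma~\ref{star-comb}). For such a $\overline{T}$ the graph $\mathcal{B}$ is a disjoint union of paths of length two together with isolated ends, so arc-connectedness of $\overline{T}$ does not make $\mathcal{B}$ connected, there is no ``unique $K_1$--$K_2$ path in $\mathcal{B}$'', no transitions in your sense occur even though the arc changes components, and the preimages $\alpha^{-1}(\overline{K^2})$ do not decompose $[0,1]$ into a sequence of intervals. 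The combinatorial walk on which the whole argument rests is therefore not defined.

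The paper's proof avoids any such global bookkeeping. It uses only that each of the two $v'$--$v$ arcs of the circle must contain some end of $\overline{K}$ (where $K$ is the component containing $v$), that these two ends are distinct because the arcs are internally disjoint, and that rerouting the $T^2$-chords of one arc through length-two paths of $T$ yields, via Lemma~\ref{top_conn}, a connected standard subspace of $\overline{T}$ whose $v'$--$v$ arc passes through the ``wrong'' end; this produces two distinct $v'$--$v$ arcs in the topological tree $\overline{T}$, which is impossible. If you want to keep your strategy, you would have to replace the walk in $\mathcal{B}$ by a genuine separation argument --- for instance, showing that some end on the unique $v_1$--$v_2$ arc of $\overline{T}$ topologically separates $v_1$ from $v_2$ in all of $\overline{T^2}$. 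As written, the proposal has a real gap.
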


\begin{proof}
We begin with a basic observation.
The inclusion map from $G$ into $G^2$ induces an embedding from $|G|$ into $|G^2|$ in a canonical way.
Moreover, all ends of $G^2$ are contained in the image of this embedding.
To see this note that any two non-equivalent rays in $G$ stay non-equivalent in $G^2$ since $G$ is locally finite.
Furthermore, by applying Lemma~\ref{star-comb} it is easy to see that every end in $G^2$ contains a ray that is also a ray of $G$.
This already yields an injection from $|G|$ to $|G^2|$ whose image contains all of $\Omega(G^2)$.
Verifying the continuity of this map and its inverse is immediate.

Now let us suppose for a contradiction that there is a circle $C$ in $\overline{T^2}$ containing vertices $v, v'$ from two different graph theoretical components $K, K'$ of $T^2$.
Say $v \in V(K)$ and $v' \in V(K')$.
Let $A_1$ and $A_2$ denote the two $v'$--$v$ arcs on $C$.
Since $A_1$ and $A_2$ are disjoint except from their endpoints, they have to enter $K$ via different ends $\omega^2_1$ and $\omega^2_2$ of $G^2$ that are contained in $\overline{K} \subseteq |G^2|$.
Say $\omega^2_1 \in A_1$ and $\omega^2_2 \in A_2$.
By the observation above $\omega^2_1$ and $\omega^2_2$ correspond to two different ends $\omega_1$ and $\omega_2$ of~$G$.
Only one of them, say~$\omega_1$, lies on the unique $v'$--$v$ arc that is contained in the topological tree $\overline{T}$.
Now we modify $A_2$ by replacing each edge $uw$ of $A_2$ which is not in $E(T)$ by a $u$--$w$ path of length $2$ that lies in $T$.
By Lemma~\ref{top_conn} this yields an arc-connected subspace of $\overline{T}$ that contains $v$ and $v'$.
By our observation above the unique $v'$--$v$ arc in this subspace must contain the end $\omega_2$.
This, however, is a contradiction since we have found two different $v'$--$v$ arcs in $\overline{T}$.
\end{proof}

Now we start preparing the proof of Theorem~\ref{top_catp_HC}.
For this we define a certain partition of the vertex set of a topological caterpillar.
Additionally, we define a linear order of these partition classes.
Let $G$ be a locally finite connected graph and $\overline{T}$ a topological caterpillar in~$|G|$.
Furthermore, let $L$ denote the set of leaves of $T$.
By definition, $\overline{T-L}$ is an arc, call it $A$.
This arc induces a linear order $<_A$ of the vertices of $V(T)-L$.
For consecutive vertices $v, w \in V(T)-L$ with $v <_A w$ we now define the set
\[{P_w := \lbrace w \rbrace \cup (N_T(v) \cap L)} \]
(cf.~Figure~\ref{partition}).
If $A$ has a maximal element $m$ with respect to $<_A$, we define an additional set $P^+ = N_T(m) \cap L$.
Should $A$ have a minimal element $s$ with respect to $<_A$, we define another additional set $P^- = \lbrace s \rbrace$.
The sets $P_w$, possibly together with $P^+$ and $P^-$, form a partition $\mathcal{P}_T$ of $V(T)$.
For any $v \in V(T)$ we denote the corresponding partition class containing $v$ by $V_v$.
Next we use the linear order $<_A$ to define a linear order $<_T$ on $\mathcal{P}_T$.
For any two vertices ${v, w \in V(T)-L}$ with $v <_A w$ set $V_v <_T V_w$.
If $P^+$ (resp.~$P^-$) exists, set $P_v <_T P^+$ (resp.~${P^- <_T P_v}$) for every ${v \in V(T)-L}$.
Finally we define for two vertices~${v, w \in V(T)}$ with $V_v \leq_T V_w$ the set
\[I_{vw} := \bigcup \lbrace V_u \; ; \; V_v \leq_T V_u \leq_T V_w \rbrace. \]

\begin{figure}[htbp]
\centering
\includegraphics{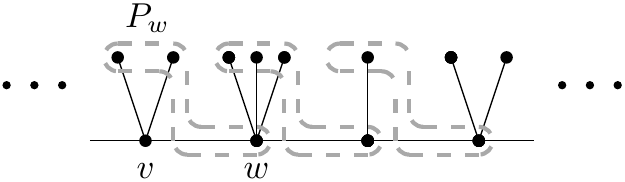}
\caption{The partition classes $P_w$.}
\label{partition}
\end{figure}

The following basic lemma lists important properties of the partition $\mathcal{P}_T$ together with its order $<_T$.
The proof of this lemma is immediate from the definitions of $\mathcal{P}_T$ and $<_T$.
Especially for Lemma~\ref{decomp} and in the proof of Theorem~\ref{top_catp_HC} the listed properties will be applied intensively.
Furthermore, the proof that statement~(iii) of Theorem~\ref{fin_cater} implies statement~(i) of Theorem~\ref{fin_cater} follows easily from this lemma.

\begin{lemma}\label{order}
Let $\overline{T}$ be a topological caterpillar in $|G|$ for a locally finite connected graph~$G$. Then the partition $\mathcal{P}_T$ of $V(T)$ has the following properties:
\begin{enumerate}[\normalfont(i)]
\item Any two different vertices belonging to the same partition class of $\mathcal{P}_T$ have distance~$2$ from each other in $T$.
\item For consecutive partition classes $Q$ and $R$ with $Q <_T R$, there is a unique vertex in $Q$ that has distance $1$ in $T$ to every vertex of $R$.
For $Q \neq P^-$, this vertex is the one of $Q$ that is not a leaf of $T$.
\end{enumerate}
\end{lemma}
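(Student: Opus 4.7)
The plan is to verify (i) and (ii) by a straightforward case analysis on the type of partition class in $\mathcal{P}_T$ (namely $P^-$, $P^+$, or one of the $P_w$), with the only combinatorial input being that $T$ is a forest, so that leaves have $T$-degree~$1$ and no two neighbours of a common vertex are adjacent (otherwise a triangle would arise in $T$). Everything else reduces to unpacking the definitions of the classes and of the order $<_T$.

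For (i), I would split according to the type of class containing the two given vertices. The class $P^-$ is a singleton, so the statement is vacuous. For $P^+ = N_T(m) \cap L$, two distinct leaves share the common $T$-neighbour $m$, giving distance at most~$2$; they cannot be adjacent since each has $T$-degree~$1$, so the distance is exactly~$2$. For $P_w = \{w\} \cup (N_T(v) \cap L)$, where $v$ is the $<_A$-predecessor of $w$, two leaves reduce to the previous case, while $w$ and any leaf $\ell \in N_T(v) \cap L$ share the common neighbour $v$; the edge $w\ell$ is excluded because, together with $vw$ and $v\ell$, it would form a triangle in the forest $T$, so the distance is again exactly~$2$.

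For (ii), fix consecutive classes $Q <_T R$. If $Q = P^-$, then $Q = \{s\}$, so uniqueness is automatic, and one only checks that the successor $R = P_w$ has $w$ the $<_A$-successor of $s$, together with $N_T(s) \cap L$, all of whose members are $T$-adjacent to $s$ by definition. If $Q = P_w$, the definition of $<_T$ forces $R$ to be either $P_u$ with $u$ the $<_A$-successor of $w$, or, in the case $w = m$, the class $P^+$; in both cases every vertex of $R$ is a $T$-neighbour of $w$ by construction, so the non-leaf vertex $w$ of $Q$ has the required property.

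The one step needing slightly more than pure definition-chasing is uniqueness in the $Q = P_w$ case, which I expect to be the main obstacle. Any vertex of $Q \setminus \{w\}$ is a leaf $\ell \in N_T(v) \cap L$ whose unique $T$-neighbour is $v$, so for $\ell$ to have distance~$1$ to every vertex of $R$ one would need $R \subseteq \{v\}$; but $v$ belongs to an earlier class of $\mathcal{P}_T$ (namely $V_v$, or $P^-$ if $v$ is the $<_A$-minimum), and by disjointness of the partition $v \notin R$, giving the required contradiction.
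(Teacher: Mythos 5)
Your case analysis is correct and is exactly the definition-unpacking the paper has in mind: the paper leaves this proof empty, declaring it immediate from the definitions of $\mathcal{P}_T$ and $<_T$. The only point you use implicitly without comment is that consecutive vertices of $<_A$ are $T$-adjacent (the sub-arc of $\overline{T-L}$ between them contains no further vertices, hence is a single edge), but this is part of the same routine unpacking and your argument is complete.
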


\begin{proof}
\end{proof}

Referring to statement~(ii) of Lemma~\ref{order}, let us call the vertex in a partition class~$Q \in \mathcal{P}_T$ that is not a leaf of $T$ the \textit{jumping vertex} of $Q$.

We still need a bit of notation and preparation work before we can prove the main theorem of this section.
Now let $\overline{T}$ denote a topological caterpillar with only one graph-theoretical component.
Let $(\mathcal{X}_1, \mathcal{X}_2)$ be a bipartition of the partition classes $V_v$ such that consecutive classes with respect to $\leq_T$ lie not both in $\mathcal{X}_1$, or in $\mathcal{X}_2$.
Furthermore, let $v, w \in V(T)$ be two vertices, say with $V_v \leq_T V_w$, whose distance is even in $T$.
We define a $(v, w)$ \textit{square string} $S$ in $T^2$ to be a path in $T^2$ with the following properties:

\begin{enumerate}
\item $S$ uses only vertices of partitions that lie in the bipartition class $\mathcal{X}_i$ in which $V_v$ and $V_w$ lie.
\item $S$ contains all vertices of partition classes $V_u \in \mathcal{X}_i$ for $V_v <_T V_u <_T V_w$.
\item $S$ contains only $v$ and $w$ from $V_v$ and $V_w$, respectively.
\end{enumerate}
Similarly, we define $(v, w]$, $[v, w)$ and $[v, w]$ square strings in $T^2$, but with the difference in (3) that they shall also contain all vertices of $V_w$, $V_v$ and $V_v \cup V_w$, respectively.
We call the first two types of square strings \textit{left open} and the latter ones \textit{left closed}.
The notion of being \textit{right open} and \textit{right closed} is analogously defined.
From the properties of $\mathcal{P}_T$ listed in Lemma~\ref{order}, it is immediate how to construct square strings.

The next lemma gives us two possibilities to cover the vertex set of a graph-theoretical component of a topological caterpillar $\overline{T}$ that contains a double ray.
Each cover will consist of two, possibly infinite, paths of $T^2$.
Later on we will use these covers to connect all graph-theoretical components of $\overline{T}$ in a certain way such that a Hamilton circle of $G^2$ is formed.

\begin{lemma}\label{decomp}
Let $G$ be a locally finite connected graph and let $\overline{T}$ be a topological caterpillar in $|G|$.
Suppose $T$ has only one graph-theoretical component and contains a double ray.
Furthermore, let $v$ and $w$ be vertices of $T$ with $V_v \leq_T V_w$.
\begin{enumerate}[\normalfont(i)]

\item If $d_T(v, w)$ is even, then in $T^2$ there exist a $v$--$w$ path $P$, a double ray $D$ and two rays $R_v$ and $R_w$ with the following properties:

\begin{itemize}
\item $P$ and $D$ are disjoint as well as $R_v$ and $R_w$.
\item $V(T) = V(P) \cup V(D) = V(R_v) \cup V(R_w)$.
\item $v$ and $w$ are the start vertices of $R_v$ and $R_w$, respectively.
\item $R_v \cap V_x = \emptyset$ for every $V_x >_T V_w$.
\item $R_w \cap V_y = \emptyset$ for every $V_y <_T V_v$.
\end{itemize}

\vspace{5 pt}

\item If $d_T(v, w)$ is odd, then in $T^2$ there exist rays $R_v, R_w, R'_v, R'_w$ with the following properties:

\begin{itemize}
\item $R_v$ and $R_w$ are disjoint as well as $R'_v$ and $R'_w$.
\item $V(T) = V(R_v) \cup V(R_w) = V(R'_v) \cup V(R'_w)$.
\item $v$ is the start vertex of $R_v$ and $R'_v$ while $w$ is the one of $R_w$ and $R'_w$.
\item $R_v \cap V_x = R'_w \cap V_x = \emptyset$ for every $V_x >_T V_w$.
\item $R_w \cap V_y = R'_v \cap V_y = \emptyset$ for every $V_y <_T V_v$.
\end{itemize}
\end{enumerate}
\end{lemma}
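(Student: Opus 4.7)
The plan is an explicit construction exploiting the adjacency structure of $T^2$ induced by $\mathcal{P}_T$. Since $T$ is connected, contains a double ray, and has $\overline{T-L}$ an arc, the spine $T-L$ is itself a double ray $\ldots, v_{-1}, v_0, v_1, \ldots$, and $\mathcal{P}_T = \{P_{v_i} : i \in \mathbb{Z}\}$ with $P_{v_i} = \{v_i\} \cup L(v_{i-1})$, where $L(v_j)$ denotes the $T$-leaves of $v_j$. Place $v \in P_{v_a}$ and $w \in P_{v_b}$ with $a \le b$; checking the four subcases according to whether each of $v, w$ is a jumping vertex or a leaf of its class, one sees that $d_T(v, w)$ is even precisely when $a \equiv b \pmod{2}$, equivalently when $V_v$ and $V_w$ lie in the same bipartition class of $\mathcal{P}_T$.

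The decisive $T^2$-adjacencies are: each $P_{v_i}$ is a clique; the jumping vertex $v_i$ is $T^2$-adjacent to every vertex of $P_{v_i} \cup P_{v_{i+1}} \cup P_{v_{i+2}}$ and additionally to $v_{i-1}$ and $v_{i-2}$; while a leaf $\ell \in L(v_{i-1}) \subset P_{v_i}$ has $T^2$-neighbours only in $\{v_{i-2}, v_{i-1}, v_i\} \cup (L(v_{i-1}) \setminus \{\ell\})$. These furnish, for any $m \le n$, a \emph{snake} path
\[
v_m - L(v_m) - v_{m+1} - L(v_{m+1}) - \cdots - v_n
\]
traversing every partition class from $P_{v_m}$ to $P_{v_n}$ by visiting its leaf part and then its jumping vertex (or vice versa in the reverse direction), and stitching two opposite tails together produces a Hamilton double ray of $T^2$ as a reference object.

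For case (i), I would take $P$ to be a square string from $v$ to $w$ inside their common bipartition class, with type chosen from $(v,w), (v,w], [v,w), [v,w]$ according to the positions of $v, w$ within their classes so that the residual portions of $V_v, V_w$ remain $T^2$-accessible from the complement of $V(P)$. The double ray $D$ is then a modified snake through $V(T) \setminus V(P)$: the left-snake below $V_v$ is joined, via a detour through the jumping vertices of the opposite bipartition class in the middle region, to the right-snake above $V_w$, simultaneously absorbing the middle vertices unused by $P$ together with any residual boundary leaves of $V_v$ and $V_w$. A class-by-class check against the adjacencies above confirms that $D$ is a single double ray disjoint from $P$ with $V(P) \cup V(D) = V(T)$. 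The rays $R_v, R_w$ are then extracted from $P \cup D$ by cutting $P$ at the edge incident with $v$ (leaving the one-vertex piece $\{v\}$) and cutting $D$ at an edge in the middle region chosen so that the left half of $D$ is joined to $\{v\}$ via a single $T^2$-edge, and symmetrically for the right half and the $w$-side of $P$; the containment conditions $R_v \cap V_x = \emptyset$ for $V_x >_T V_w$ and $R_w \cap V_y = \emptyset$ for $V_y <_T V_v$ follow from the fact that the left half of $D$ lies below $V_v$ and the right half above $V_w$. Case (ii) is analogous, but no single $(v,w)$-square string exists across the bipartition gap, so the two ray pairs $(R_v, R_w)$ and $(R'_v, R'_w)$ are constructed directly; they differ by reassigning one boundary leaf set (either $L(v_{a-1})$ or $L(v_{b-1})$) from one ray to the other, an operation made possible exactly by the odd parity.

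The main obstacle is the local bookkeeping at the two boundary classes $V_v$ and $V_w$. Since each $L(v_{j-1}) \subset P_{v_j}$ has only the three spine vertices $v_{j-2}, v_{j-1}, v_j$ as possible $T^2$-exits, as soon as two of them are assigned to a single constructed object, $L(v_{j-1})$ is forced into that same object (otherwise it becomes an isolated dead-end set). This couples the choice of square-string type, and the placement of the cuts in $D$, to whether $v$ and $w$ are jumping vertices or leaves, producing a small number of sub-subcases in each of (i) and (ii); once these local decisions are fixed, the snake construction in the interior of the spine is entirely uniform.
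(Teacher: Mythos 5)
Your setup and your treatment of case (i) follow the paper's own proof in all essentials: you identify the spine $T-L$ as a double ray, read off the $T^2$-adjacencies of jumping vertices and leaves, note that the parity of $d_T(v,w)$ matches the bipartition of $\mathcal{P}_T$, take $P$ as a square string whose open/closed type depends on whether $v$ and $w$ are jumping vertices, and build $D$ from a square string through the opposite bipartition part extended by snakes on both tails. Your way of obtaining $R_v$ and $R_w$ by cutting and re-splicing $P\cup D$ differs in mechanism from the paper, which builds $R_v$ afresh from its own square string (so that $R_v$, not $R_w$, absorbs the middle classes), but both variants satisfy the stated containment conditions and both reduce to the same local bookkeeping at $V_v$ and $V_w$ that you correctly isolate.

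The one place you go astray is the description of case (ii). The two pairs $(R_v,R_w)$ and $(R'_v,R'_w)$ cannot "differ by reassigning one boundary leaf set": since $R_w$ must avoid every class $V_y<_T V_v$ while $R'_w$ must avoid every class $V_x>_T V_w$, the first pair necessarily puts the entire lower tail of the caterpillar into $R_v$ and the entire upper tail into $R_w$, whereas the second pair does the opposite ($R'_v$ sweeps upward from $v$, $R'_w$ sweeps downward from $w$). So the two pairs differ on infinitely many vertices and must each be constructed from scratch -- one with $v$'s ray exiting downward and $w$'s upward, the other with the exit directions (and hence the traversal of the middle region) reversed; the odd parity is what lets both orientations be realised by square strings. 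This does not invalidate your method, since each pair is built exactly as in case (i), but the claimed relationship between the two pairs is wrong and would mislead anyone carrying out the sub-subcases.
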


\begin{proof}
We sketch the proof of statement (i).
As $v$--$w$ path $P$ we take a square string $S_{vw}$ in $T^2$ with $v$ and $w$ as endvertices.
Depending whether $v$ is a jumping vertex or not we take a left open or closed square string, respectively.
Depending on $w$ we take a right closed or open square string if $w$ is a jumping vertex or not, respectively.
Since $d_T(v, w)$ is even, we can find such square strings.
To construct the double ray $D$ start with a $(v^-, w^-]$ square string in $T^2$ where $v^-$ and $w^-$ denote the jumping vertices in the partition classes proceeding $V_v$ and $V_w$, respectively.
Using the properties~(i) and (ii) of the partition $\mathcal{P}_T$ mentioned in Lemma~\ref{order}, the $(v^-, w^-]$ square string can be extend to a desired double ray $D$ containing all vertices of $T$ that do not lie in $S_{vw}$ (cf.~Figure~\ref{caterpillar_pattern}).

To define $R_v$ we start with a square string $S_v$ having $v$ as one endvertex.
For the definition of $S_v$ we distinguish four cases.
If $v$ and $w$ are jumping vertices, we set $S_v$ as a path obtained by taking a $(v, w]$ square string and deleting $w$ from it.
If $v$ is not a jumping vertex, but~$w$ is one, take a $[v, w]$ square string, delete $w$ from it and set the remaining path as~$S_v$.
In the case that $v$ is a jumping vertex, but $w$ is none, $S_v$ is defined as a path obtained from a $(v, w)$ square string from which we delete $w$.
In the case that neither $v$ nor $w$ is a jumping vertex, we take a $[v, w)$ square string, delete $w$ from it and set the remaining path as $S_v$.
Next we extend $S_v$ using a square string to a path with $v$ as one endvertex containing all vertices in partition classes $V_u$ with $V_v <_T V_u <_T V_w$.
We extend the remaining path to a ray that contains also all vertices in partition classes $V_u$ with $V_u \leq_T V_v$, but none from partition classes $V_x$ for $V_x >_T V_w$.
The desired second ray $R_w$ can now easily be build in $T^2 -R_v$.

The rays for statement (ii) are defined in a very similar way (cf.~Figure~\ref{caterpillar_pattern}).
Therefore, we omit their definitions here.
\end{proof}

\begin{figure}[htbp]
\centering
\includegraphics{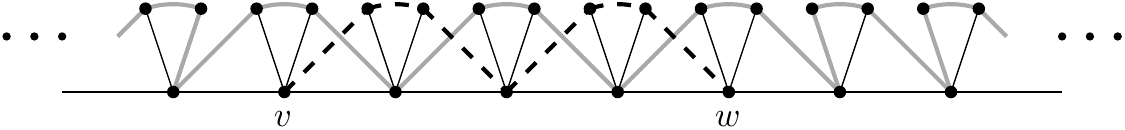} \vspace{10pt}

\includegraphics{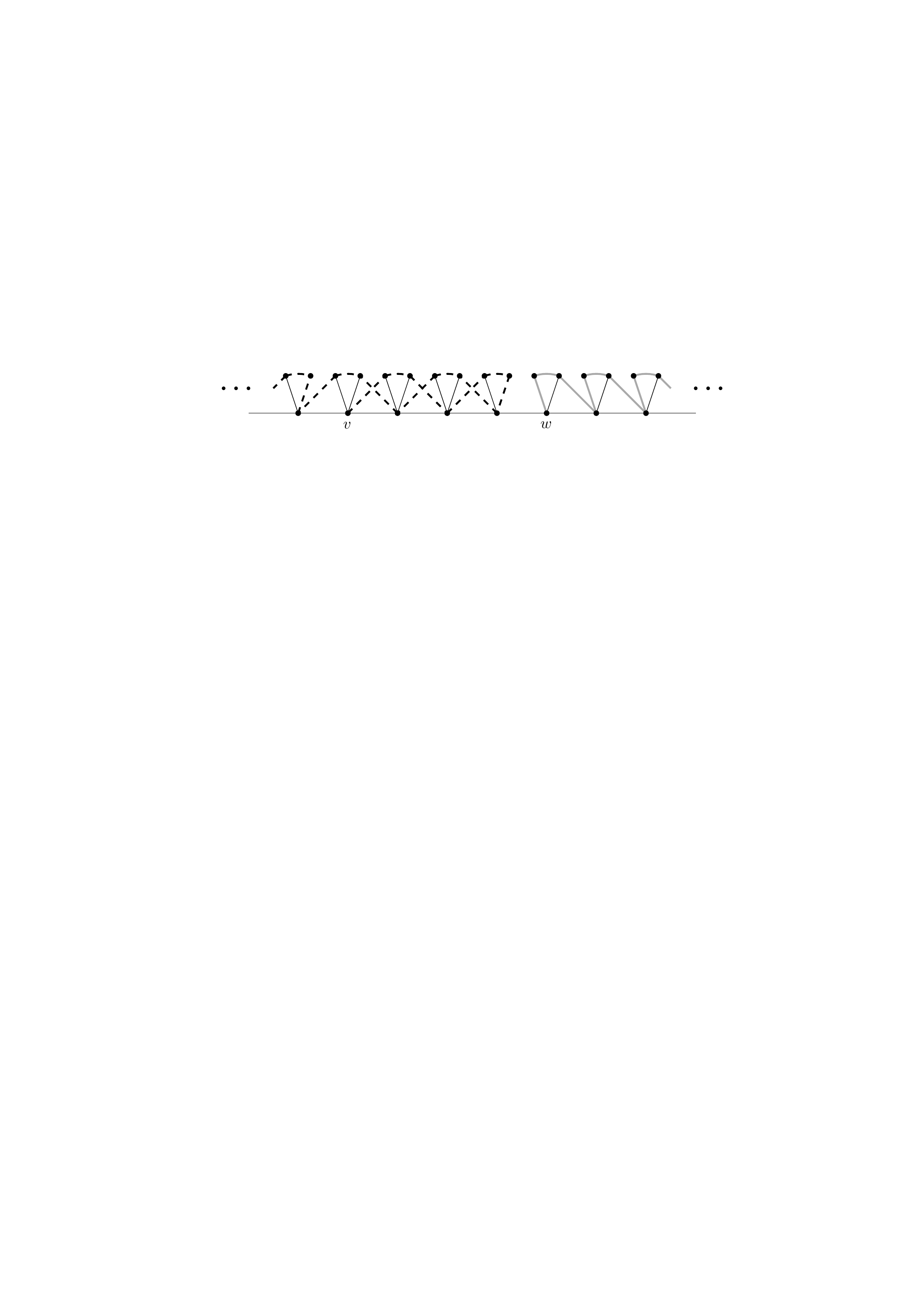} \vspace{10pt}

\includegraphics{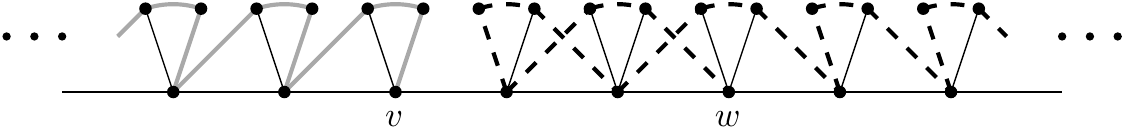} \vspace{10pt}

\includegraphics{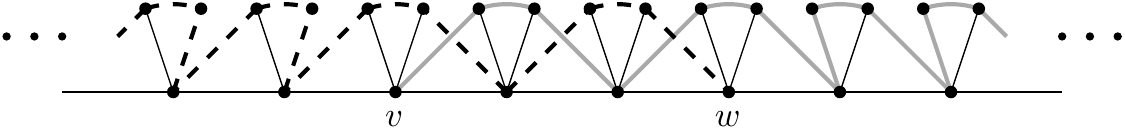} 
\caption{Examples for covering the vertices of a caterpillar as in Lemma~\ref{decomp}.}
\label{caterpillar_pattern}
\end{figure}

The following lemma is essential for connecting the parts of the vertex covers of two different graph-theoretical components of $\overline{T}$.
Especially, here we make use of the structure of $|G|$ instead of arguing only inside of $\overline{T}$ or $\overline{T^2}$.
This allows us to build a Hamilton circle using square strings and to ``jump over" an end to avoid producing an edge-degree bigger than $2$ at that end.

\begin{lemma}\label{shortcut}
Let $\overline{T}$ be a spanning topological caterpillar of a locally finite connected graph $G$ and let $v, w \in V(G)$ where $V_v \leq_T V_w$.
Then for any two vertices $x, y$ with ${V_v <_T V_x <_T V_w}$ and ${V_v <_T V_y <_T V_w}$ there exists a finite $x$--$y$ path in $G[I_{vw}]$.
\end{lemma}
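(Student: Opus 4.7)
The plan is to prove that the unique $x$--$y$ arc $\alpha$ in the topological tree $\overline{T}$ lies in the standard subspace $\overline{G[I_{vw}]}$, and then extract from $\alpha$ a finite $x$--$y$ path in $G[I_{vw}]$ by a cut-based argument. First I would identify $\alpha$: since $\overline{T}$ is a topological tree any two of its points are joined by a unique arc. Assuming WLOG $V_x \leq_T V_y$, I would describe $\alpha$ as the concatenation of the (at most length~$2$) path from $x$ to the jumping vertex $a_x$ of $V_x$, the sub-arc of the spine $A=\overline{T-L}$ from $a_x$ to the jumping vertex $a_y$ of $V_y$, and the short path from $a_y$ to $y$. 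Every vertex point on $\alpha$ lies in a partition class $V_u$ with $V_x \leq_T V_u \leq_T V_y$, and by the hypothesis $V_v<_T V_x$ and $V_y<_T V_w$ each such $V_u$ is contained in $I_{vw}$; each end point on $\alpha$ is approached along $\alpha$ itself by $T$-rays of the spine whose anchors lie in $I_{vw}$, and hence is an end in $\overline{T[I_{vw}]}$. Therefore $\alpha \subseteq \overline{T[I_{vw}]} \subseteq \overline{G[I_{vw}]}$.

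Next I would argue by contradiction. Suppose no finite $x$--$y$ path exists in $G[I_{vw}]$ and let $K$ be the component of $G[I_{vw}]$ that contains $x$, so $y\notin K$. Consider the cut $F:=\delta_G(K)$ in $G$. Because $K$ is a component of $G[I_{vw}]$, no edge of $F$ has both endvertices in $I_{vw}$, so every edge of $F$ goes from $K$ to $V(G)\setminus I_{vw}$; in particular no edge of $F$ lies in $G[I_{vw}]$, so no inner point of an edge of $F$ lies in $\overline{G[I_{vw}]}$. The standard subspace $\overline{G[I_{vw}]}$ contains $\alpha$, which has its endpoints $x\in K$ and $y\in V(G)\setminus K$ on the two sides of $F$, but contains no edge of $F$. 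If $F$ were finite this would contradict Lemma~\ref{jumping-arc}(i) applied to $F$ (equivalently Lemma~\ref{top_conn} applied to the arc-connected subspace containing $\alpha$). Hence $F$ is infinite, and by Lemma~\ref{jumping-arc}(ii) there is an end $\omega\in\overline{K}\cap\overline{V(G)\setminus K}$.

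To close the argument I would use the topological caterpillar structure to localize $\omega$. Since $T$ spans $G$, every end of $G$ lies on the spine $A$ (any sequence of $T$-leaves converging to an end forces the attached anchors to converge to the same end). Using that $\overline{T[I_{vw}]}$ corresponds to the sub-arc $A[a_v,a_w]$, the end $\omega$, which must be approached by $K$-vertices in $I_{vw}$, is forced to lie strictly inside $A(a_v,a_w)$. Both $T$-rays of $\omega$ on either side of $\omega$ along $A$ therefore lie in $I_{vw}$. Since $\omega$ is a single end of $G$, for every finite $S\subseteq V(G)$ these two $T$-rays have tails lying in the same $\omega$-component of $G-S$, which is connected. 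Taking $S$ to be a finite set that separates the two $T$-rays of $\omega$ from the (necessarily finitely many, by the topological caterpillar structure) $T$-edges of the spine crossing out of $I_{vw}$ near $a_v$ and $a_w$, the $\omega$-component of $G-S$ lies in $I_{vw}$; this yields a finite path in $G[I_{vw}]$ between $K$ and the component of $G[I_{vw}]$ on the other side of $\omega$, contradicting the maximality of $K$.

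The main obstacle I expect is precisely this last localization step: arguing carefully that an end on the open sub-arc $A(a_v,a_w)$ cannot be approached in $|G|$ by rays lying cofinally outside $I_{vw}$, so that sufficiently enlarging the finite separator $S$ traps the $\omega$-component inside $I_{vw}$. The handles I would use are that $T$ spans $G$ (so every $G$-vertex is in some partition class), together with the fact that the finite $T$-boundary of $I_{vw}$ near $a_v$ and $a_w$ controls how $G$-rays of $\omega$ can enter $V(G)\setminus I_{vw}$; the rest of the proof is routine given the arc-containment established in the first paragraph.
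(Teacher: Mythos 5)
Your overall strategy is the same as the paper's: use the unique $x$--$y$ arc in $\overline{T[I_{vw}]}$ to locate an end $\omega$ on the spine strictly between the jumping vertices of $V_v$ and $V_w$, show that some neighbourhood of $\omega$ in $|G|$ contains only vertices of $I_{vw}$, and derive a contradiction with the assumed empty cut of $G[I_{vw}]$ separating $x$ from $y$. The arc-containment in your first paragraph and the reduction to an end in your second paragraph are fine (modulo the small point that the end you want must lie in $\overline{K}\cap\overline{I_{vw}\setminus K}$, which you do recover at the very end once the $\omega$-component is trapped in $I_{vw}$).

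The genuine gap is exactly at the step you flag as the main obstacle, and the handle you propose there does not close it. You choose $S$ so as to separate the two spine-rays of $\omega$ from the finitely many \emph{$T$-edges} of the spine leaving $I_{vw}$ near $a_v$ and $a_w$. But nothing in the hypotheses prevents \emph{$G$-edges} from running directly from vertices arbitrarily close to $\omega$ to vertices of $V(G)\setminus I_{vw}$; such edges need not pass anywhere near $a_v$ or $a_w$ in $T$, so your $S$ does not cut them, and the $\omega$-component of $G-S$ may still leave $I_{vw}$. What is needed (and what the paper proves) is that a suitable open set $O'$ around $\omega$ --- chosen so that $O'\cap\overline{T-L}$ is a connected relatively open piece of the spine whose closure avoids $a_v$ and $a_w$ --- contains only \emph{finitely many} vertices of $V(G)\setminus I_{vw}$. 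The argument is a compactness argument: infinitely many such vertices $z_n$ would converge to some $p\in|G|$; since $\overline{T}$ is spanning and $G$ is locally finite, and each $z_n$ is within $T$-distance $2$ of the jumping vertex of $V_{z_n}$, those jumping vertices also converge to $p$; they all lie on the closed portion of the spine outside the open sub-arc between $a_v$ and $a_w$, so $p$ lies there too, while on the other hand $p\in\overline{O'}\cap\overline{T-L}$ forces $p$ into the closure of the chosen spine piece --- a contradiction. Deleting the finitely many offending vertices from $O'$ then yields the neighbourhood of $\omega$ all of whose vertices lie in $I_{vw}$, after which your concluding contradiction goes through. Without this finiteness argument your proof does not stand.
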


\begin{proof}
Let the vertices $v, w, x$ and $y$ be as in the statement of the lemma and, as before, let $L$ denote the set of leaves of $T$.
Now suppose for a contradiction that there is no finite $x$--$y$ path in $G[I_{vw}]$.
Then we can find an empty cut $D$ of $G[I_{vw}]$ with sides $M$ and $N$ such that $x$ and $y$ lie on different sides of it.
Since $\overline{T \cap G[I_{vw}]}$ contains an $x$--$y$ arc, there must exist an end $\omega \in \overline{M} \cap \overline{N} \cap \overline{T-L}$.

Let us show next that there exists an open set $O$ in $|G|$ that contains $\omega$ and, additionally, every vertex in $O$ is an element of $I_{vw}$.
To see this we first pick a set $O_A \subseteq \overline{T-L}$ so that it is open in the subspace $\overline{T-L}$, topologically connected and contains $\omega$, but its closure does not contain the jumping vertices of $V_v$ and~$V_w$.
Now let $O'$ be an open set in $|G|$ witnessing that $O_A$ is open in $\overline{T-L}$.
We prove that $O'$ contains only finitely many vertices of $V(G) \setminus I_{vw}$.
Suppose for a contradiction that this is not the case.
Then we would find an infinite sequence $(z_n)_{n \in \mathbb{N}}$ of different vertices in $O' \setminus I_{vw}$ that must converge to some point $p \in |G|$ by the compactness of $|G|$.
Since~$\overline{T}$ is a spanning topological caterpillar of $G$, it contains all the vertices~$z_n$.
Using that $G$ is locally finite, we get that the jumping vertices of the sets $V_{z_n}$ also form a sequence that converges to $p$.
So we can deduce that ${p \in \overline{T-L}}$, because $\overline{T-L}$ is a closed subspace containing all jumping vertices.
Hence, ${p \in \overline{O'} \cap (\overline{T-L}) = \overline{O_A}}$.
This is a contradiction to our choice of $O_A$ ensuring $p \notin \overline{O_A}$.
Hence, $O'$ contains only finitely many vertices of $V(G) \setminus I_{vw}$, say $v_1, \ldots, v_n$ for some $n \in \mathbb{N}$.
Before we define our desired set $O$ using $O'$, note that ${O_v := |G| \setminus \lbrace v \rbrace}$ defines an open set in $|G|$ for every vertex $v \in V(G)$.
Therefore, ${O := O' \cap \bigcap^n_{i=1} O_{v_i}}$ is an open set in $|G|$ containing no vertex of $V(G) \setminus I_{vw}$.

Inside $O$ we can find a basic open set $B$ around $\omega$, which contains a graph-theoretical connected subgraph with all vertices of $B$.
Now $B$ contains vertices of $M$ and $N$ as well as a finite path between them, which must then also exist in $G[I_{vw}]$.
Such a path would have to cross $D$ contradicting the assumption that $D$ is an empty cut in $G[I_{vw}]$.
\end{proof}

To figure out which parts of the vertex covers of which graph-theoretical components of~$\overline{T}$ we can connect such that afterwards we are still able to extend this construction to a Hamilton circle of $G$, we shall use the next lemma.
For the formulation of the lemma, we use the notion of \textit{splits}.

Let $G$ be a multigraph and $v \in V(G)$.
Furthermore, let $E_1, E_2 \subseteq \delta(v)$ such that $E_1 \cup E_2 = \delta(v)$ but $E_1 \cap E_2 = \emptyset$ where $E_i \neq \emptyset$ for $i \in \lbrace 1, 2 \rbrace$.
Now we call a multigraph $G'$ a $v$\textit{-split} of $G$ if
\[{V(G') = V(G) \setminus \lbrace v \rbrace \cup \lbrace v_1, v_2 \rbrace}\]
with ${v_1, v_2 \notin V(G)}$ and
\[{E(G') = E(G-v) \cup \lbrace v_1w \; ; \; wv \in E_1 \rbrace \cup \lbrace v_2u \; ; \; uv \in E_2 \rbrace}.\]
We call the vertices $v_1$ and $v_2$ \textit{replacement vertices} of $v$.

\begin{lemma}\label{euler_split}
Let $G$ be a finite Eulerian multigraph and $v$ be a vertex of degree $4$ in~$G$.
Then there exist two $v$-splits $G_1$ and $G_2$ of $G$ both of which are also Eulerian.
\end{lemma}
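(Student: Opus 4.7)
My plan is to reduce the Eulerianness of a $v$-split to mere connectedness, and then use a short parity argument on the components of $G-v$. First I would note that in any $v$-split $G'$ the degrees of vertices of $V(G)\setminus\{v\}$ are unchanged, while the replacement vertices satisfy $\deg_{G'}(v_i)=|E_i|$ for the chosen partition $\{E_1,E_2\}$ of $\delta(v)$. Since $|E_1|+|E_2|=\deg_G(v)=4$ and both $E_i$ are non-empty, $G'$ has all degrees even if and only if $|E_1|=|E_2|=2$. So I may restrict attention to the three pairings of the four edges at $v$ into pairs of size $2$, and for each such $G'$ the question becomes: is $G'$ connected?

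Next I would examine the components of $G-v$. Since $G$ is connected, every component $C$ of $G-v$ is met by at least one edge of $\delta(v)$; and since every vertex of $C$ has even degree in $G$, applying the handshake lemma inside $V(C)$ relative to $G$ forces $|\delta(v)\cap\delta(C)|$ to be even. With $\deg_G(v)=4$ this leaves exactly two possibilities: either $(A)$ $G-v$ has a single component that receives all four edges of $\delta(v)$, or $(B)$ $G-v$ has two components $C_1,C_2$, each receiving exactly two edges of $\delta(v)$.

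In case $(A)$ every one of the three pairings produces a split in which both $v_1$ and $v_2$ are adjacent to the unique component of $G-v$, so the split is connected; hence all three pairings give Eulerian $v$-splits. In case $(B)$, the pairing that groups together the two edges going to $C_1$ and separately the two edges going to $C_2$ places $v_1$ only into the part of the split containing $C_1$ and $v_2$ only into the part containing $C_2$, disconnecting the graph. Each of the two remaining pairings gives $v_1$ one edge to $C_1$ and one to $C_2$, and similarly for $v_2$, so both replacement vertices link the two components and the split is connected. Either way at least two of the three pairings yield Eulerian $v$-splits, proving the lemma.

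The only mildly subtle step is the parity observation restricting the incidences between $v$ and the components of $G-v$; once that is in hand the case analysis is immediate, and I anticipate no real obstacle beyond being careful about the reduction to $|E_1|=|E_2|=2$.
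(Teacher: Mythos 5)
Your proof is correct and follows essentially the same route as the paper: reduce Eulerianness of a split to connectedness, observe that each component of $G-v$ meets $\delta(v)$ in an even (hence $\geq 2$) number of edges so that $G-v$ has at most two components, and check that at most one of the three balanced pairings can disconnect the graph. If anything, your handshake-lemma justification for the component count is more explicit than the paper's, which simply asserts that $G-v$ has two components in the disconnected case.
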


\begin{proof}
There are ${\frac{1}{2} } \cdot {\binom {4} {2}} = 3$ possible non-isomorphic $v$-splits of $G$ such that $v_1$ and $v_2$ have degree $2$ in the $v$-split.
Assume that one of them, call it $G'$, is not Eulerian.
This can only be the case if $G'$ is not connected.
Let $(A, B)$ be an empty cut of $G'$.
Note that $G-v$ has precisely two components $C_1$ and $C_2$ since $G$ is Eulerian and $v$ has degree $4$ in $G$.
So $C_1$ and $C_2$ must lie in different sides of $(A, B)$, say $C_1 \subseteq A$.
Since $G$ was connected, we get that $v_1$ and $v_2$ lie in different sides of the cut $(A, B)$, say $v_1 \in A$.
Therefore, ${A = C_1 \cup \lbrace v_1 \rbrace}$ and ${B = C_2 \cup \lbrace v_2 \rbrace}$.
If ${\delta(v) = \lbrace vw_1, vw_2, vw_3, vw_4 \rbrace}$ and ${\lbrace v_1w_1, v_1w_2 \rbrace , \lbrace v_2w_3, v_2w_4 \rbrace \subseteq E(G')}$, set $G_1$ and $G_2$ as $v$-splits of $G$ such that the inclusions $\lbrace v_1w_1, v_1w_3 \rbrace , \lbrace v_2w_2, v_2w_4 \rbrace \subseteq E(G_1)$ and $\lbrace v_1w_1, v_1w_4 \rbrace , \lbrace v_2w_2, v_2w_3 \rbrace \subseteq E(G_2)$ hold.
Now $G_1$ and $G_2$ are Eulerian, because every vertex has even degree in each of those multigraphs and both multigraphs are connected.
To see the latter statement, note that any empty cut $(X, Y)$ of $G_i$ for $i \in \lbrace 1, 2 \rbrace$ would need to have $C_1$ and $C_2$ on different sides.
If also $v_1$ and $v_2$ are on different sides, we would have $(A, B) = (X, Y)$, which does not define an empty cut of $G_i$ by definition of $G_i$.
However, $v_1$ and $v_2$ cannot lie on the same side of the cut $(X, Y)$.
This is because otherwise the cut $(X, Y)$ would induce an empty cut in $G$ after identifying $v_1$ and $v_2$ in $G_i$.
Since $G$ is Eulerian and therefore especially connected, we would have a contradiction.
\end{proof}

Now we have all tools together to prove Theorem~\ref{top_catp_HC}.
Before we start the proof, let us recall the statement of the theorem.

\setcounter{section}{1}
\setcounter{theorem}{4}
\begin{theorem}
The square of any locally finite connected graph $G$ on at least three vertices such that $|G|$ contains a spanning topological caterpillar is Hamiltonian.
\end{theorem}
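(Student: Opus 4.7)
The plan is to construct a Hamilton circle $C$ of $G^2$ piece by piece, using Lemma~\ref{decomp} to obtain spanning covers of each graph-theoretical component of $T$ by two disjoint paths, rays or double rays in the square of that component, and then using Lemma~\ref{shortcut} to glue these covers across ends along the spine arc $A := \overline{T-L}$ of the spanning topological caterpillar $\overline T$. Verification that the resulting subgraph closes to a Hamilton circle will go through Lemma~\ref{circ}, reduced via Lemma~\ref{top_conn} and Theorem~\ref{cycspace} to combinatorial checks at finite cuts of $G$ and $G^2$.

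The first step is to organise the global skeleton. The arc $A$ induces a linear order on $V(T-L)$ and, via Lemma~\ref{order}, on the partition $\mathcal P_T$. By Lemma~\ref{_inf_forbd_sub}, every end of $G$ lying in $\overline T$ has vertex-degree in $\overline T$ at most $2$, so removing from $A$ the ends of $G$ splits $A$ into maximal open sub-arcs, each belonging to exactly one graph-theoretical component $K_\alpha$ of $T$; consecutive components $K_\alpha, K_{\alpha+1}$ thus share a unique end $\omega_\alpha \in A$. In each $K_\alpha$ I would pick two distinguished vertices $v_\alpha, w_\alpha$ close to the respective boundary ends of its sub-arc and apply Lemma~\ref{decomp} (or its evident finite analogue for finite or one-sided components) to obtain two disjoint pieces in $K_\alpha^2$ whose union is spanning in $V(K_\alpha)$ and which have $v_\alpha, w_\alpha$ as prescribed endvertices. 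By choosing the parity of $d_T(v_\alpha, w_\alpha)$ component by component, I can control which of the two decompositions of Lemma~\ref{decomp} gets used, and hence how the pieces align with neighbouring components.

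The second step is gluing. At each shared end $\omega_\alpha$ the pieces in $K_\alpha$ and $K_{\alpha+1}$ produce a constant number of loose endvertices, all lying in some interval $I_{uw}$ straddling $\omega_\alpha$. Lemma~\ref{shortcut} allows me to connect any two of them in different components by a finite path in $G[I_{uw}]$, hence also in $G^2$, and I would pair them up appropriately and add the corresponding glue paths to the candidate edge set $C$. The parity choices from the previous step are made precisely so that the pieces and glue paths fit together: every vertex of $G$ ends up with degree exactly $2$ in $C$, and the global structure is a single spanning subgraph of $G^2$.

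The main obstacle will be verifying that $\overline C$ is genuinely a Hamilton circle, not a disjoint union of several circles, and not a structure with wrong edge-degree at ends. Vertex degree is fine by construction, so by Lemma~\ref{circ} what remains is topological connectedness of $\overline C$ together with edge-degree exactly $2$ at every end of $G$ in $\overline C$. For connectedness I use Lemma~\ref{top_conn}: any finite cut of $G$ separates the arc $A$ into finitely many pieces, and $C$ necessarily passes between them through the glue paths; for finite cuts of $G^2$ the same analysis applies after projecting via the canonical embedding of $|G|$ into $|G^2|$. The edge-degree at ends $\omega_\alpha \in A$ is $2$ by construction, since exactly two rays coming from Lemma~\ref{decomp} pieces converge to $\omega_\alpha$. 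For the remaining ends of $G$, Theorem~\ref{cycspace} reduces the condition to an even edge-count in every finite cut of $G^2$; the subtle case is when several pieces and glue paths simultaneously straddle a cut and produce an auxiliary crossing pattern of local degree four, and there I invoke Lemma~\ref{euler_split} on a finite Eulerian multigraph encoding the crossings to re-route the pairing so that the cut is met evenly and so that, globally, $\overline C$ ends up being a single circle rather than a disjoint union of several. Carrying out this coherent re-routing across all ends simultaneously, while preserving the degree and spanning conditions established in the first two steps, is the trickiest bookkeeping of the proof.
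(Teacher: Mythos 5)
Your proposal assembles the right toolkit (Lemmas~\ref{decomp}, \ref{shortcut}, \ref{euler_split}, verification via Lemmas~\ref{top_conn} and~\ref{circ} and Theorem~\ref{cycspace}), but the central gluing step rests on a false structural picture. You assume the graph-theoretical components of $T$ are arranged along the spine arc $A=\overline{T-L}$ in a \emph{discrete} linear order, so that ``consecutive components $K_\alpha, K_{\alpha+1}$ share a unique end $\omega_\alpha$'' and one can glue neighbour to neighbour. The components of $T$ do correspond bijectively to the maximal end-free subintervals of $A$, but the order type of these intervals can be dense: the ends lying on $A$ may form, for instance, a Cantor set, in which case a component need not have a successor at all and there is no ``shared end'' at which to perform a local gluing. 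Relatedly, a finite glue path supplied by Lemma~\ref{shortcut} cannot converge to an end; to pass from one component of $T$ to another it must leave $T$, and in general it runs through the vertex sets of several intermediate components lying between the two in $<_T$. Those interior vertices must themselves be covered exactly once by the final circle, so the glue cannot be treated as a local operation at a single end paired with pre-chosen spanning covers of the two components: the covers of \emph{all} components touched or straddled by the connecting path have to be chosen and re-routed coherently with the jumps. This coordination is precisely what the Eulerian-multigraph splitting is for, and your proposal defers it to an afterthought about ``crossing patterns'' rather than making it the backbone of the gluing step.

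The paper's proof avoids the nonexistent discrete adjacency by building two disjoint arcs $\overline{A^i}$, $\overline{B^i}$ in an $\omega$-step induction along a fixed vertex enumeration. At each step Lemma~\ref{shortcut} supplies one finite path $P$ from the current frontier to the next uncovered vertex; the components of $T$ incident with $E(P)\setminus E(T)$ become the vertices of a finite Eulerian multigraph $H_1$ (with further edges recording the arcs of $\overline T$ between them), and repeated applications of Lemma~\ref{euler_split} reduce $H_1$ to a cycle $H_k$ whose two frontier-to-target paths dictate, via Lemma~\ref{decomp} and square strings built from the partition of Lemma~\ref{order}, how the two growing arcs are routed through every intermediate component, with parity cases resolved split by split. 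To salvage your approach you would need to replace ``glue consecutive components at their shared end'' by such a step that processes an arbitrary finite block of components between the frontier and the next target in one go, and you would need to drop the idea of fixing the spanning covers of all components in advance, since the correct choice of cover for a component depends on the splits performed when that component is first reached.
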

\setcounter{theorem}{7}
\setcounter{section}{3}

\begin{proof}
Let $G$ be a graph as in the statement of the theorem and let $\overline{T}$ be a spanning topological caterpillar of $G$.
We may assume by Corollary~\ref{fin_cater_impl} that $G$ has infinitely many vertices.
Now let us fix an enumeration of the vertices, which is possible since every locally finite connected graph is countable.
We inductively build a Hamilton circle of $G^2$ in at most $\omega$ many steps.
We ensure that in each step $i \in \mathbb{N}$ we have two disjoint arcs $\overline{A^i}$ and $\overline{B^i}$ in $|G^2|$ whose endpoints are vertices of subgraphs $A^i$ and $B^i$ of $G^2$, respectively.
Let $a^i_{\ell}$ and $a^i_{r}$ (resp.\ $b^i_{\ell}$ and $b^i_{r}$) denote the endvertices of $\overline{A^i}$ (resp.\ $\overline{B^i}$) such that $V_{a^i_{\ell}} \leq_T V_{a^i_{r}}$ (resp.\ $V_{b^i_{\ell}} \leq_T V_{b^i_{r}}$).
For the construction we further ensure the following properties in each step $i \in \mathbb{N}$:

\begin{enumerate}
\item The vertices $a^i_{r}$ and $b^i_{r}$ are the jumping vertices of $V_{a^i_{r}}$ and $V_{b^i_{r}}$, respectively.
\item The partition sets $V_{a^i_{\ell}}$ and $V_{b^i_{\ell}}$ as well as $V_{a^i_{r}}$ and $V_{b^i_{r}}$ are consecutive with respect to $\leq_T$.
\item If $V_v \cap V(A^i \cup B^i) \neq \emptyset$ holds for any vertex $v \in V(G)$, then $V_v \subseteq V(A^i \cup B^i)$.
\item If for any vertex $v \in V(G)$ there are vertices $u, w \in V(G)$ such that ${V_u, V_w \subseteq V(A^i \cup B^i)}$ and $V_u \leq_T V_v \leq_T V_w$, then $V_v \subseteq V(A^i \cup B^i)$ is true.
\item $A^i \cap A^{i+1} = A^i$ and $B^i \cap B^{i+1} = B^i$, but $V(A^{i+1} \cup B^{i+1})$ contains the least vertex with respect to the fixed vertex enumeration that was not already contained in $V(A^i \cup B^i)$.
\end{enumerate}

We start the construction by picking two adjacent vertices $t$ and $t'$ in $T$ that are no leaves in $T$.
Then $V_t$ and $V_t'$ are consecutive with respect to $\leq_T$.
Note that $G^2[V_t]$ and $G^2[V_{t'}]$ are cliques by property~(i) of the partition $\mathcal{P}_T$ mentioned in Lemma~\ref{order}.
We set $A^1$ to be a Hamilton path of $G^2[V_t]$ with endvertex $t$ and $B^1$ to be one of $G^2[V_{t'}]$ with endvertex $t'$.
This completes the first step of the construction.

Suppose we have already constructed $A^n$ and $B^n$.
Let $v \in V(G)$ be the least vertex with respect to the fixed vertex enumeration that is not already contained in $V(A^n \cup B^n)$.
We know by our construction that either $V_v <_T V_x$ or $V_v >_T V_x$ for every vertex $x \in V(A^n \cup B^n)$.
Consider the second case, since the argument for the first works analogously.
Let $v' \in V(G)$ be a vertex such that $V_{v'}$ is the predecessor of $V_{v}$ with respect to $\leq_T$.
Further, let $w \in V(G)$ be a vertex such that $V_w >_T V_{a^n_{r}}, V_{b^n_{r}}$ and $V_w$ is the successor of either $V_{a^n_{r}}$ or $V_{b^n_{r}}$, say $V_{b^n_{r}}$.
By Lemma~\ref{shortcut} there exists a $v'$--$w$ path $P$ in $G[I_{b^n_r, v}]$.
We may assume that $E(P) \setminus E(T)$ does not contain an edge whose endvertices lie in the same graph-theoretical component of~$T$.
Furthermore, we may assume that every graph-theoretical component of $T$ is incident with at most two edges of~$E(P) \setminus E(T)$.
Otherwise we could modify the path $P$ using edges of $E(T)$ to meet these conditions.

Next we inductively define a finite sequence of finite Eulerian auxiliary multigraphs $H_1, \ldots, H_k$ where $H_k$ is a cycle for some $k \in \mathbb{N}$.
Every vertex in each of these multigraphs will have either degree $2$ or degree $4$.
Furthermore, we shall obtain $H_{i+1}$ from $H_i$ as a $h$-split for some vertex $h \in V(H_i)$ of degree $4$ until we end up with a multigraph $H_k$ that is a cycle.

As $V(H_1)$ take the set of all graph-theoretical components $T_1, \ldots, T_n$  of $T$ that are incident with an edge of $E(P) \setminus E(T)$.
Two vertices $T_i$ and $T_j$ are adjacent if either there is an edge in $E(P) \setminus E(T)$ whose endpoints lie in $T_i$ and $T_j$ or there is a $t_i$--$t_j$ arc $\overline{A}$ in $\overline{T}$ for a subgraph $A$ of $T$ and vertices $t_i \in V(T_i)$ and $t_j \in V(T_j)$ such that no endvertex of any edge of $E(P) \setminus E(T)$ lies in $V(A) \cup N_T(A)$.
Since $\overline{T}$ is a spanning topological caterpillar, the multigraph $H_1$ is connected.
By definition of~$P$, the multigraph $H_1$ is also Eulerian where all vertices have either degree $2$ or~$4$.

Now suppose we have already constructed $H_i$ and there exists a vertex ${h \in V(H_i)}$ with degree $4$ in $H_i$.
Since $H_i$ is obtained from $H_1$ via repeated splitting operations, we know that $h$ is incident with two edges $d, e$ in $H_i$ that correspond to edges~${d_P, e_P}$, respectively, of ${E(P) \setminus E(T)}$.
Furthermore, $h$ is incident with two edges $f, g$ that correspond to arcs $\overline{A_f}$ and $\overline{A_g}$, respectively, of $\overline{T}$ for subgraphs $A_f$ and $A_g$ of $T$ such that neither $V(A_f) \cup N_T(A_f)$ nor $V(A_g) \cup N_T(A_g)$ contain an endvertex of an edge of ${E(P) \setminus E(T)}$.
Let $T_j$ be the graph-theoretical component of $T$ in which each of $d_P$ and $e_P$ has an endvertex, say $w_d$ and $w_e$, respectively.
Here we consider two cases:

\begin{case}
The distance in $T_j$ between $w_d$ and $w_e$ is even.
\end{case}

In this case we define $H_{i+1}$ as a Eulerian $h$-split of $H_i$ such that one of the following two options holds for the edge $d_{i+1}$ in $H_{i+1}$ corresponding to $d$.
The first option is that $d_{i+1}$ is adjacent to the edge in $H_{i+1}$ corresponding to $e$.
The second options is that $d_{i+1}$ is adjacent to the edge in $H_{i+1}$ corresponding to either $f$ or $g$ with the property that the path in $T_j$ connecting~$w_d$ and $A_f$ (resp.~ $A_g$) does not contain $w_e$.
This is possible since two of the three possible non-isomorphic $v$-splits of $H_i$ are Eulerian by Lemma~\ref{euler_split}.

\begin{case}
The distance in $T_j$ between $w_d$ and $w_e$ is odd.
\end{case}

Here we set $H_{i+1}$ as a Eulerian $h$-split of $H_i$ such that the edge in $H_{i+1}$ corresponding to $d$ is not adjacent to the one corresponding to $e$.
As in the first case, this is possible because two of the three possible non-isomorphic $h$-splits of $H_i$ are Eulerian by Lemma~\ref{euler_split}.
This completes the definition of the sequence of auxiliary multigraphs.
\\

Now we use the last auxiliary multigraph $H_k$ of the sequence to define the arcs $\overline{A^{n+1}}$ and $\overline{B^{n+1}}$.
Note that $P$ is a $w$--$v'$ path in $G[I_{b^n_r, v}]$ where $v'$ and $w$ lie in the same graph-theoretical components $T_{v'}$ and $T_w$ of $T$ as $v$ and $b^n_r$, respectively.
Since we may assume that $E(P) \setminus E(T) \neq \emptyset$ holds, let $e \in E(P) \setminus E(T)$ denote the edge which contains one endvertex $w_e$ in $T_w$.
Then either the distance between $w_e$ and $a^n_r$ or between $w_e$ and $b^n_r$ is even, say the latter one holds.
Now we first extend $B^n$ via a $(b^n_r, w_e]$ square string in $T^2$ and $A^n$ by a $(a^n_r, w_e^+]$ square string in~$T^2$ where $V_{w_e^+}$ is the successor of $V_{w_e}$ with respect to $\leq_T$ and $w_e^+$ is the jumping vertex of~$V_{w_e^+}$.
Then we extend $A^n$ further using a ray to contain all vertices of partition classes $V_x$ with $V_x >_T V_{w_e^+}$ for $x \in T_w$.
This is possible due to the properties~(i) and (ii) of the partition $\mathcal{P}_T$ mentioned in Lemma~\ref{order}.

Next let $P_1$ and $P_2$ be the two edge-disjoint $T_{v'}$--$T_w$ paths in $H_k$.
Since every edge of $E(P) \setminus E(T)$ corresponds to an edge of $H_k$, we get that $e$ corresponds either to $P_1$ or $P_2$, say to the former one.
Therefore, we will use $P_1$ to obtain arcs to extend~$B^n$ and $P_2$ for arcs extending $A^n$.
Now we make use of the definition of $H_k$ via splittings.
For any vertex $T_j$ of $H_1$ of degree $4$ we have performed a $T_j$-split.
We did this in such a way that the partition of the edges incident with $T_j$ into pairs of edges incident with a replacement vertex of $T_j$ corresponds to a cover of $V(T_j)$ via two, possibly infinite, paths as in Lemma~\ref{decomp}.
So for every vertex of $H_1$ of degree $4$ we take such a cover.
For every graph-theoretical component $T_m$ of $T$ such that there exist two consecutive edges $T_iT_j$ and $T_jT_{\ell}$ of $P_1$ or $P_2$ that do not correspond to edges of $E(P) \setminus E(T)$ and $V_{t_i} <_T V_{t_m} <_T V_{t_j}$ or $V_{t_j} <_T V_{t_m} <_T V_{t_{\ell}}$ holds for every choice of $t_i \in T_i$, $t_j \in T_j$, $t_{\ell} \in T_{\ell}$ and $t_m \in T_m$, we take a spanning double ray of $T^2_m$.
We can find such spanning double rays by using again the properties~(i) and (ii) of the partition $\mathcal{P}_T$ mentioned in Lemma~\ref{order}.
Since $H_k = P_1 \cup P_2$ is a cycle, we can use these covers and double rays to extend $\overline{A^n}$ and $\overline{B^n}$ to be disjoint arcs~$\alpha^n$ and $\beta^n$ with endvertices on $T_{v'}$.
With the same construction that we have used for extending $A^n$ and $B^n$ on $T_w$, we can extend $\alpha^n$ and $\beta^n$ to have endvertices~$v'_j$ and $v_j$ which are the jumping vertices of $V_{v'}$ and $V_{v}$, respectively.
Additionally, we incorporate that these extensions contain all vertices of partition classes $V_y$ for $y \in T_{v'}$ and $V_y \leq V_v$.
Then we take these arcs as $\overline{A^{n+1}}$ and $\overline{B^{n+1}}$ where $A^{n+1}$ and $B^{n+1}$ are the corresponding subgraphs of $G^2$ whose closures give the arcs.
By setting $a^{n+1}_r$ and $b^{n+1}_r$ to be $v'_j$ and $v_j$, depending on which of the two arcs $\overline{A^{n+1}}$ or $\overline{B^{n+1}}$ ends in these vertices, we have guaranteed all properties from $(1)$ to $(5)$ for the construction.

Now the properties $(3) - (5)$ yield not only that $\overline{A}$ and $\overline{B}$ are disjoint arcs for ${A = \bigcup_{i \in \mathbb{N}} A^i}$ and ${B = \bigcup_{i \in \mathbb{N}} B^i}$, but also that $V(G) = V(A \cup B)$.
If there exists neither a maximal nor minimal partition class with respect to $\leq_T$, the union~$\overline{A \cup B}$ forms a Hamilton circle of $G^2$ by Lemma~\ref{circ}.
Should there exist a maximal partition class, say $V_{a^n_r}$ for some $n \in \mathbb{N}$ with jumping vertex $a^n_r$, the vertex $a^n_r$ will also be an endvertex of~$\overline{A}$.
In this case we connect the endvertices $a^n_r$ and $b^n_r$ of $\overline{A}$ and~$\overline{B}$ via an edge.
Such an edge exists since $V_{a^n_r}$ and $V_{b^n_r}$ are consecutive with respect to $\leq_T$ by property $(2)$ and $a^n_r$ as well as $b^n_r$ are jumping vertices by property $(1)$.
Analogously, we add an edge if there exists a minimal partition class.
Therefore, we can always obtain the desired Hamilton circle of $G^2$.
\end{proof}

\section{Graphs without $K^4$ or $K_{2,3}$ as minor}

We begin this section with a small observation which allows us to strengthen Theorem~\ref{HC_K_4-K_2,3} a bit by forbidding subgraphs isomorphic to a $K^4$ instead of minors.

\begin{lemma}\label{K^4_minor_subgr}
For graphs without $K_{2, 3}$ as a minor it is equivalent to contain a $K^4$ as a minor or as a subgraph.
\end{lemma}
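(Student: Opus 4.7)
The forward implication ($K^4$ subgraph $\Rightarrow$ $K^4$ minor) is immediate. For the converse, I would argue by contradiction: assume $G$ has a $K^4$ minor but no $K_{2,3}$ minor, and among all $K^4$ minors of $G$ fix one whose branch sets $V_1,V_2,V_3,V_4$ minimise $\sum_{i=1}^{4}|V_i|$. By this minimality each $V_i$ is a tree every leaf of which is an \emph{attachment vertex} of $V_i$, meaning a vertex of $V_i$ incident in $G$ with an edge to a different branch set; in particular $V_i$ has at most three leaves. If every $|V_i|=1$ then the four $V_i$ induce a $K^4$ subgraph of $G$ and we are done, so I may assume $|V_1|\ge 2$.

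The Steiner-tree structure of $V_1$ then allows exactly three shapes (all three attachments of $V_1$ coinciding would force $|V_1|=1$). In subcase~(A) two attachments of $V_1$ coincide and $V_1$ is a path $p_0p_1\cdots p_m$ with, say, $p_0=v_{1,2}=v_{1,3}$ and $p_m=v_{1,4}$, where $v_{1,j}$ denotes the chosen attachment of $V_1$ to $V_j$. In subcase~(B) all three attachments are distinct and $V_1$ is a path $p_0\cdots p_m$ with $p_0=v_{1,2}$, $p_m=v_{1,4}$ and $p_\ell=v_{1,3}$ for some $0<\ell<m$. In subcase~(C) the three attachments are distinct leaves of a ``Y'' whose centre $s$ is the unique vertex of $V_1$ of degree three. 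In each subcase I plan to exhibit five pairwise disjoint connected vertex sets of $G$ that form a $K_{2,3}$ minor, contradicting the hypothesis on $G$.

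Concretely, in subcase~(A) I take $\{V_4,\{p_0\}\}$ on the $2$-side and $\{V_2,V_3,\{p_1,\ldots,p_m\}\}$ on the $3$-side; in subcase~(B) I take $\{\{p_0,\ldots,p_\ell\},V_4\}$ on the $2$-side and $\{\{p_{\ell+1},\ldots,p_m\},V_2,V_3\}$ on the $3$-side; in subcase~(C) I take $\{\{s\},V_2\cup V_3\cup V_4\}$ on the $2$-side (the second set being connected through the triangle of inter-branch-set edges of the $K^4$ minor) together with $\{A_2,A_3,A_4\}$ on the $3$-side, where $A_j$ denotes the arm of the Y from $s$ to $v_{1,j}$ with $s$ removed. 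In each case disjointness and connectedness of the five sets are immediate, and the six required $K_{2,3}$ cross-edges arise from path edges inside $V_1$ together with the inter-branch-set edges of the original $K^4$ minor.

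The main obstacle is subcase~(B): the naive attempt of using $V_1,V_2,V_3,V_4$ directly as four of the five branch sets leaves only one candidate, not two, for the $3$-side. The key idea is to break $V_1$ at its interior attachment $p_\ell$ and to place the left half (including $p_\ell$, which carries the edge to $V_3$) on the $2$-side; this frees $V_3$ to act as a $3$-side branch set and furnishes the needed fifth branch set. The other two subcases, by contrast, admit fairly direct splits once the Steiner structure of $V_1$ is understood.
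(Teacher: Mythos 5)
Your proof is correct, but it takes a genuinely different route from the paper's. The paper exploits that $K^4$ is cubic, so containing $K^4$ as a minor is equivalent to containing it as a topological minor; if a subdivision of $K^4$ is not already a subgraph, some branch path $P_e$ joining branch vertices $a$ and $b$ has an interior vertex $v$, and then $\{a,b\}$ against $\{c,d,v\}$ (the remaining two branch vertices together with $v$) immediately yields a subdivision of $K_{2,3}$: the four subdivision paths other than $P_e$ supply the $a$--$c$, $a$--$d$, $b$--$c$, $b$--$d$ connections, and the two halves of $P_e$ supply $a$--$v$ and $b$--$v$. You instead stay at the level of models, minimise $\sum_i|V_i|$, and classify the Steiner-tree shape of a non-singleton branch set; all three of your $K_{2,3}$ models check out (disjointness, connectedness, and the six cross edges are present exactly as you describe). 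The trade-off is that the paper's argument avoids your case distinction entirely by using the cubicness of $K^4$, while yours is more robust in that it would adapt to target graphs of maximum degree greater than $3$, where minor and topological-minor containment diverge. Two details worth tightening in a write-up: the bound of three leaves on $V_i$ does not follow from ``every leaf has a neighbour in another branch set'' alone (two leaves could serve the same neighbouring branch set) --- you should first fix one connecting edge per pair and observe that minimality forces $V_1$ to be a minimal Steiner tree for the three chosen attachments $v_{1,j}$, which is what your later notation presupposes anyway; and since the lemma is applied to infinite graphs, you should note that any $K^4$ minor admits a model with finite branch sets, so that the minimisation is legitimate.
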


\begin{proof}
One implication is clear.
So suppose for a contradiction we have a graph without a $K_{2, 3}$ as a minor that does not contain $K^4$ as a subgraph but as a subdivision.
Note that containing a $K^4$ as a subdivision is equivalent to containing a $K^4$ as a minor since $K^4$ is cubic.
Consider a subdivided $K^4$ where at least one edge $e$ of the $K^4$ corresponds to a path $P_e$ in the subdivision whose length is at least two.
Let $v$ be an interior vertex of $P_e$ and $a, b$ be the endvertices of $P_e$.
Let the other two branch vertices of the subdivision of $K^4$ be called $c$ and $d$.
Now we take $\lbrace a, b, c, d, v \rbrace$ as branch vertex set of a subdivision of $K_{2,3}$.
The vertices $a$ and $b$ can be joined to $c$ and $d$ by internally disjoint paths using the ones of the subdivision of $K^4$ except the path $P_e$.
Furthermore, the vertex $v$ can be joined to $a$ and $b$ using the paths $vP_ea$ and $vP_eb$.
So we can find a subdivision of $K_{2,3}$ in the whole graph, which contradicts our assumption.
\end{proof}

Before we start with the proof of Theorem~\ref{HC_K_4-K_2,3} we need to prepare two structural lemmas.
The first one will be very convenient for controlling end degrees because it bounds the size of certain separators.

\begin{lemma}\label{struct_1}
Let $G$ be a $2$-connected graph without $K_{2,3}$ as a minor and let $K_0$ be a connected subgraph of $G$.
Then $|N(K_1)| = 2$ holds for every component $K_1$ of $G-(K_0 \cup N(K_0))$.
\end{lemma}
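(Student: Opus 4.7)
The plan is to establish the two inequalities $|N(K_1)| \ge 2$ and $|N(K_1)| \le 2$ separately, the first from $2$-connectedness and the second from the exclusion of $K_{2,3}$ as a minor.

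First, I would verify that $N(K_1) \subseteq N(K_0)$. Any vertex $v \in N(K_1)$ lies outside $K_1$ and has a neighbor in $K_1$; it cannot belong to $K_0$, since then its $K_1$-neighbor would be in $N(K_0)$ yet also in $K_1$, contradicting that $K_1$ is a component of $G-(K_0\cup N(K_0))$; and it cannot lie in $G-(K_0\cup N(K_0))$, since then it would be in the same component as $K_1$, again a contradiction. Hence $v\in N(K_0)$. From $2$-connectedness of $G$ and the fact that $K_1$ is a proper connected subgraph that is separated from $K_0$ by $N(K_1)$, we get $|N(K_1)|\ge 2$: otherwise a single vertex would disconnect $K_1$ from $K_0$, contradicting $2$-connectedness.

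For the upper bound, I would argue by contradiction: assume there are three distinct vertices $x_1,x_2,x_3\in N(K_1)\subseteq N(K_0)$. Then in $G$ we have the following: $K_0$ and $K_1$ are disjoint connected subgraphs, each $x_i$ is distinct from both (as $x_i\notin K_0$ by membership in $N(K_0)$ and $x_i\notin K_1$ by membership in $N(K_1)$), each $x_i$ has a neighbor in $K_0$ and a neighbor in $K_1$, and the $x_i$ are pairwise distinct. Contracting $K_0$ to a single branch vertex $a$ and $K_1$ to a single branch vertex $b$, and taking $\{x_1\},\{x_2\},\{x_3\}$ as the remaining branch sets, exhibits $K_{2,3}$ as a minor of $G$, with bipartition $\{a,b\}\cup\{x_1,x_2,x_3\}$. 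This contradicts the hypothesis on $G$, so $|N(K_1)|\le 2$, completing the proof.

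The main obstacle, such as it is, is purely bookkeeping: making sure the three $x_i$ are genuinely outside both branch sets $K_0$ and $K_1$ so that the contraction really yields a $K_{2,3}$ minor rather than something degenerate. Once the inclusion $N(K_1)\subseteq N(K_0)$ is in place, everything else is immediate.
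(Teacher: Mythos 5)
Your proposal is correct and takes essentially the same approach as the paper: assuming three vertices in $N(K_1)\subseteq N(K_0)$, one exhibits a $K_{2,3}$ minor whose two branch sets on the small side come from $K_0$ and $K_1$ and whose three singleton branch sets are the common neighbours. The only cosmetic difference is that you contract $K_0$ and $K_1$ directly, while the paper builds an explicit $K_{2,3}$ subdivision from trees inside $K_0$ and $K_1$; you also spell out the inclusion $N(K_1)\subseteq N(K_0)$, which the paper uses implicitly.
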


\begin{proof}
Let $K_0$, $G$ and $K_1$ be defined as in the statement of the lemma.
Since $G$ is $2$-connected, we know that $|N(K_1)| \geq 2$ holds.
Now suppose for a contradiction that $N(K_1) \subseteq N(K_0)$ contains three vertices, say $u, v$ and $w$.
Pick neighbours $u_i$,~$v_i$ and $w_i$ of $u, v$ and $w$, respectively, in $K_i$ for $i \in \lbrace 0, 1 \rbrace$.
Furthermore, take a finite tree $T_i$ in $K_i$ whose leaves are precisely $u_i$, $v_i$ and $w_i$ for $i \in \lbrace 0,1 \rbrace$.
This is possible because $K_0$ and $K_1$ are connected.
Now we have a contradiction since the graph $H$ with $V(H) = \lbrace u, v, w \rbrace \cup V(T_0) \cup V(T_1)$ and $E(H) = \bigcup^1_{i=0} (\lbrace uu_i, vv_i, ww_i \rbrace \cup E(T_i))$ forms a subdivision of $K_{2,3}$.
\end{proof}

Let $G$ be a connected graph and $H$ be a connected subgraph of $G$.
We define the operation of \textit{contracting} $H$ \textit{in} $G$ as taking the minor of $G$ which is attained by contracting in $G$ all edges of $H$.
Now let $K$ be any subgraph of $G$.
We denote by $G_K$ the following minor of $G$:
First contract in $G$ each subgraph that corresponds to a component of~$G-K$.
Then delete all multiple edges.

Obviously $G_K$ is connected if $G$ was connected.
We can push this observation a bit further towards $2$-connectedness with the following lemma.

\begin{lemma}\label{fin_2_con}
Let $K$ be a connected subgraph with at least three vertices of a \linebreak $2$-connected graph $G$.
Then $G_K$ is $2$-connected.
\end{lemma}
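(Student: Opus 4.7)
My plan is to argue by contradiction, working with the natural quotient map $\phi \colon V(G) \to V(G_K)$ which is the identity on $V(K)$ and sends each vertex of a component $C$ of $G-K$ to the vertex of $G_K$ obtained by contracting $C$. The key observation I will build on is that any path of $G$ avoiding $\phi^{-1}(v)$ projects under $\phi$, after collapsing consecutive repetitions, to a walk of $G_K$ avoiding $v$; hence connectedness of $G - \phi^{-1}(v)$ implies connectedness of $G_K - v$.

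First I will note that $V(K) \subseteq V(G_K)$ together with $|V(K)| \geq 3$ gives $|V(G_K)| \geq 3$, so it suffices to show that $G_K$ has no cut vertex. Suppose for contradiction that $v \in V(G_K)$ is one, and split into two cases according to whether $v \in V(K)$ or $v$ is the contracted vertex for some component $C$ of $G - K$.

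In the first case, $\phi^{-1}(v) = \{v\}$, and the $2$-connectedness of $G$ makes $G - v$ connected, which by the observation above forces $G_K - v$ to be connected, contradicting the choice of $v$. In the second case, $\phi^{-1}(v) = V(C)$, and I will argue that $G - V(C)$ is connected as follows: $K$ lies inside $G - V(C)$ and is connected by hypothesis, while every other component $C'$ of $G - K$ satisfies $N(C') \subseteq V(K)$, so $C'$ is attached to $K$ inside $G - V(C)$. Thus $G - V(C)$ is connected, making $G_K - v$ connected, again a contradiction.

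I do not anticipate any real obstacle; the only mildly delicate point is verifying the projection claim carefully, namely that consecutive vertices of a path in $G$ which happen to lie in the same component of $G - K$ (and so collapse under $\phi$) cause no trouble in the resulting walk of $G_K$. Once that is settled, the two cases run in parallel and together rule out the hypothetical cut vertex, completing the proof.
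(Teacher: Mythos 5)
Your proof is correct and takes essentially the same route as the paper's: a contradiction argument on a hypothetical cut vertex $v$ of $G_K$, split into the cases $v \in V(K)$ and $v$ a contracted component, with the connectedness of $K$ and the fact that components of $G-K$ attach only to $K$ doing all the work. Your projection map $\phi$ merely packages, slightly more systematically, the paper's observation that paths of $G$ avoiding $\phi^{-1}(v)$ survive as walks in $G_K - v$.
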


\begin{proof}
Suppose for a contradiction that $G_K$ is not $2$-connected for some $G$ and $K$ as in the statement of the lemma.
Since $K$ has at least three vertices, we obtain that $G_K$ has at least three vertices too.
So there exists a cut vertex $v$ in $G_K$.
If $v$ is also a vertex of $G$ and, therefore, does not correspond to a contracted component of $G-K$, then $v$ would also be a cut vertex of $G$.
This contradicts the assumption that $G$ is $2$-connected.

Otherwise $v$ corresponds to a contracted component of $G-K$.
Note that two vertices of $G_K$ both of which correspond to contracted components of $G-K$ are never adjacent by definition of $G_K$.
However, $v$ being a cut vertex in $G_K$ must have at least one neighbour in each component of $G_K-v$.
So in particular we get that $v$ separates two vertices, say $x$ and $y$, of $G_K$ that do not correspond to contracted components of $G-K$.
This yields a contradiction because $K$ is connected and, therefore, contains an $x$--$y$ path.
This path still exists in $G_K$ and contradicts the statement that $v$ separates $x$ and $y$ in~$G_K$.
\end{proof}

We shall need another lemma for the proof Theorem~\ref{HC_K_4-K_2,3}.
In that proof we shall construct an embedding of an infinite graph into a fixed closed disk $D$ by first embedding a finite subgraph into $D$.
Then we extend this embedding stepwise to bigger finite subgraphs so that eventually we define an embedding of the whole graph into $D$.
The following lemma will allow us to redraw newly embedded edges as straight lines in each step while keeping the embedding of every edge that was already embedded as a straight line.
Additionally, we will be able to keep the embedding of those edges that are mapped into the boundary of the disk.

\begin{lemma}\label{straight_embed}
Let $G$ be a finite $2$-connected outerplanar graph and $C$ be its Hamilton cycle.
Furthermore, let $\sigma: G \longrightarrow D$ be an embedding of $G$ into a fixed closed disk $D$ such that $C$ is mapped onto the boundary $\partial D$ of $D$.
Then there is an embedding $\sigma^*: G \longrightarrow D$ such that
\textnormal{
\begin{enumerate}[\normalfont(i)]
\item \textit{$\sigma^*(e)$ is a straight line for every $e \in E(G) \setminus E(C)$.}
\item \textit{$\sigma^*(e) = \sigma(e)$ if $e \in E(C)$ or $\sigma(e)$ is a straight line.}
\end{enumerate}
}\end{lemma}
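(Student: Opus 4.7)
The plan is to define $\sigma^*$ by keeping $\sigma$ on vertices, on cycle edges, and on those non-cycle edges that $\sigma$ already draws as straight segments, and by replacing every remaining non-cycle edge $e=uv$ with the straight line segment from $\sigma(u)$ to $\sigma(v)$. Conditions (i) and (ii) then hold immediately by construction, so the only thing to verify is that the resulting map is again an embedding.

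First I would collect the easy facts. Since $\sigma(C)=\partial D$ and $C$ is a Hamilton cycle, all vertices of $G$ are mapped by $\sigma$ to pairwise distinct points of $\partial D$. Because $\sigma$ is injective on edges and $\sigma(C)=\partial D$, every non-cycle edge is drawn in $D$ with both endpoints on $\partial D$ and interior in the open disk $D\setminus\partial D$; in particular the endpoints of each replacement segment are well-defined and distinct. Strict convexity of $D$ then implies that each such straight segment lies in $D$ and meets $\partial D$ only at its two endpoints, so the new non-cycle edges do not touch the image $\sigma(C)$ off the vertices. Moreover, three distinct points on the circle $\partial D$ are never collinear, hence no new straight segment passes through any vertex other than its two endpoints, and any two new segments that share an endpoint meet only at that shared endpoint.

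The remaining and main task is to show that any two of the new (straight) non-cycle edges do not cross each other. The underlying combinatorial fact is standard: two Jordan arcs in $D$ with endpoints on $\partial D$ and interiors in $D\setminus\partial D$ can coexist disjointly apart from possibly sharing an endpoint if and only if their endpoints do not interleave along $\partial D$, where interleaving means that the two endpoints of one arc lie in different components of $\partial D$ minus the endpoints of the other. This is a short Jordan curve argument: each such arc together with one of the two arcs of $\partial D$ that it cuts off bounds a closed subdisk of $D$, and interleaving forces the other arc to cross this bounding curve. Applying the criterion to $\sigma$, the pairwise non-crossing of non-cycle edges yields that no two endpoint pairs interleave on $\partial D$; applying it to $\sigma^*$, that same non-interleaving property guarantees that the corresponding straight segments are pairwise disjoint apart from possibly sharing a vertex.

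The main obstacle is a clean formal statement and proof of the interleaving criterion; everything else reduces to convexity of $D$ and the fact that all vertices lie on $\partial D$. Once the criterion is in place, the verification that $\sigma^*$ is a homeomorphism onto its image (using Lemma~\ref{invers_cont} and compactness of the edges) is routine, and conditions (i) and (ii) are immediate from the construction.
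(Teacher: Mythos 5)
Your proposal is correct, but it reaches the key non-crossing step by a genuinely different route from the paper. The paper proceeds by induction on $|E(G)\setminus E(C)|$: it removes one non-straight chord $e=uv$, straightens the rest by the induction hypothesis, and then rules out a crossing between the straight segment for $uv$ and an already-straightened chord $xy$ by observing that such a crossing forces $u,v,x,y$ to interleave on $C$, so that $C$ together with the two chords yields a $K^4$ minor --- contradicting outerplanarity via Theorem~\ref{outerplanar_count_char}. You instead redraw all non-straight chords at once and derive non-crossing purely topologically: disjointness of the \emph{original} curved chords under $\sigma$ forces their endpoint pairs not to interleave on $\partial D$ (a Jordan curve argument), and non-interleaved endpoint pairs on a circle give non-crossing straight chords. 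Thus your argument never uses the forbidden-minor characterisation of outerplanarity, only that $\sigma$ is an embedding with $C$ on $\partial D$; the price is that you must actually prove the interleaving criterion for Jordan arcs, which you correctly identify as the main technical burden, whereas the paper's inductive set-up lets it replace that topological lemma by a one-line combinatorial $K^4$-minor obstruction. Both arguments also rely, as you note, on strict convexity of $D$ to keep chords off $\partial D$ and off third vertices; your treatment of those points, and of the final compactness/Hausdorff verification that $\sigma^*$ is an embedding, is sound.
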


\begin{proof}
We prove the statement by induction on $\ell := |E(G) \setminus E(C)|$.
For $\ell = 0$ we can choose the given embedding $\sigma$ as our desired embedding $\sigma^*$.
Now let $\ell \geq 1$ and suppose $\sigma$ does not already fulfill all properties of $\sigma^*$.
Then there exists an edge $e \in E(G) \setminus E(C)$ such that $\sigma(e)$ is not a straight line.
Hence, $G-e$ is still a $2$-connected outerplanar graph that contains $C$ as its Hamilton cycle.
Also $\sigma \upharpoonright _{G-e}$ is an embedding of $G-e$ into $D$ such that $C$ is mapped onto $\partial D$.
So by the induction hypothesis we get an embedding $\tilde{\sigma}^*$ satisfying~(i)~and~(ii) with respect to $\sigma \upharpoonright _{G-e}$.
Now let $e = uv$ and suppose for a contradiction that we cannot additionally embed $e$ as a straight line between $u$ and $v$.
Then there exists an edge $xy \in E(G-e) \setminus E(C)$ such that $\tilde{\sigma}^*(xy)$ is crossed by the straight line between $u$ and $v$.
Because $\tilde{\sigma}^*(xy)$ is a straight line between $x$ and $y$ by property~(ii), we know that the vertices $u, v, x$ and $y$ are pairwise distinct.
This, however, is a contradiction to $G$ being outerplanar since the cycle $C$ together with the edges $uv$ and $xy$ witness the existence of a $K^4$ minor in $G$ with $u, v, x$ and $y$ as branch sets.
So we can extend $\tilde{\sigma}^*$ by embedding $e = uv$ as a straight line between $u$ and $v$, which yields our desired embedding of $G$ into $D$.
\end{proof}

With the lemmas above we are now prepared to prove Theorem~\ref{HC_K_4-K_2,3}.
We recall the formulation of the theorem.

\setcounter{section}{1}
\setcounter{theorem}{7}
\begin{theorem}
Let $G$ be a locally finite connected graph.
Then the following statements are equivalent:
\begin{enumerate}[\normalfont(i)]
\item $G$ is $2$-connected and contains neither $K^4$ nor $K_{2,3}$ as a minor.
\item $|G|$ has a Hamilton circle $C$ and there exists an embedding of $|G|$ into a closed disk such that $C$ is mapped onto the boundary of the disk.
\end{enumerate}
Furthermore, if statements (i) and (ii) hold, then $|G|$ has a unique Hamilton circle.
\end{theorem}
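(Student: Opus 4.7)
My plan is to prove (ii)$\Rightarrow$(i) directly, and (i)$\Rightarrow$(ii) by exhausting $G$ with finite $2$-connected outerplanar minors to which Proposition~\ref{summary} applies, before passing to a topological limit. For (ii)$\Rightarrow$(i), Corollary~\ref{2-con} gives $2$-connectedness from the Hamilton circle, and any $K^4$ or $K_{2,3}$ minor of $G$ would already sit inside a finite subgraph $H\subseteq G$; restricting the given disk embedding to $\overline{H}$ exhibits $H$ as an outerplanar graph containing that minor, which contradicts Theorem~\ref{outerplanar_count_char} together with Lemma~\ref{K^4_minor_subgr}.

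For (i)$\Rightarrow$(ii), I would fix an enumeration $v_1,v_2,\ldots$ of $V(G)$, build a nested sequence of finite connected subgraphs $K_n$ with $v_n\in V(K_n)$ and $\bigcup_n V(K_n)=V(G)$, and set $H_n:=K_n\cup N(K_n)$, which is finite by local finiteness. Lemma~\ref{fin_2_con} then makes $G_{H_n}$ a finite $2$-connected graph; since $G_{H_n}$ is a minor of $G$ it has neither $K^4$ nor $K_{2,3}$ as a minor, so Theorem~\ref{outerplanar_count_char} makes it outerplanar and Proposition~\ref{summary} supplies a unique Hamilton cycle $C_n$ of $G_{H_n}$. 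The crucial structural input is Lemma~\ref{struct_1}: every component $K$ of $G-H_n$ satisfies $|N(K)|=2$, so the vertex of $G_{H_n}$ representing $K$ has degree $2$ and is traversed by $C_n$ through its unique two neighbours in $H_n$. The uniqueness of each $C_n$ then forces the cyclic orders it induces on $V(H_n)$ to be compatible as $n$ grows, and I would define $C$ as the set of edges of $G$ that lie in $C_n$ for cofinally many $n$. To show that $\overline{C}$ is a Hamilton circle of $|G|$ via Lemma~\ref{circ}, I would verify that $C$ meets every finite cut of $G$ in exactly two edges, so that Lemma~\ref{top_conn} yields topological connectedness, every vertex has degree $2$ in $C$, and Theorem~\ref{cycspace} combined with the cut bound pins the edge-degree of every end of $G$ in $\overline{C}$ to exactly $2$. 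For the disk embedding I would iteratively embed $G_{H_n}$ in a fixed closed disk $D$ with $C_n$ mapped onto $\partial D$, using Lemma~\ref{straight_embed} to draw chords as straight lines and to keep successive embeddings consistent on $H_n$; the pointwise limit, extended to ends via the natural cyclic order induced on $\partial D$, is the required embedding of $|G|$.

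For the uniqueness clause, given any Hamilton circle $C'$ of $|G|$ and any $n$, the cut $\delta(K)$ is finite of size $2$ for each component $K$ of $G-H_n$, so Lemma~\ref{jumping-arc}(i) forces $C'$ to enter and leave $\overline{K}$ exactly once; contracting each such $\overline{K}$ to a point therefore turns $C'$ into a Hamilton cycle of $G_{H_n}$, which by the finite uniqueness must equal $C_n$. Since this holds for all $n$, $C'$ and $C$ have identical edge sets and $\overline{C'}=\overline{C}$. The main obstacle throughout is showing that the combinatorially compatible finite Hamilton cycles genuinely assemble into a topological embedding of $S^1$ rather than just a consistent cyclic order: it is precisely the $|N(K)|=2$ bound of Lemma~\ref{struct_1}, together with the evenness supplied by Theorem~\ref{cycspace}, that allows one to pin the edge-degree of each end to exactly $2$, which is the non-obvious hypothesis in Lemma~\ref{circ}.
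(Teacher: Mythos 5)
Your proposal is correct and follows essentially the same route as the paper: exhaust $G$ by finite connected subgraphs, pass to the $2$-connected outerplanar contraction minors $G_{K_n}$ via Lemma~\ref{fin_2_con} and Lemma~\ref{struct_1}, invoke Proposition~\ref{summary} for their unique Hamilton cycles, glue the disk embeddings using Lemma~\ref{straight_embed}, and derive uniqueness by projecting any Hamilton circle onto these finite minors. One small caveat for the write-up: a Hamilton circle does \emph{not} meet every finite cut in exactly two edges (only in a positive even number), so the degree-$2$ conclusion for ends must be extracted, as you indicate at the end, specifically from the cuts $\delta(K)$ with $|N(K)|=2$ supplied by Lemma~\ref{struct_1} together with the evenness from Theorem~\ref{cycspace}.
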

\setcounter{theorem}{4}
\setcounter{section}{4}

\begin{proof}
First we show that $(ii)$ implies $(i)$.
Since $G$ is Hamiltonian, we know by Corollary~\ref{2-con} that $G$ is $2$-connected.
Suppose for a contradiction that $G$ contains $K^4$ or $K_{2,3}$ as a minor.
Then $G$ has a finite subgraph $H$ which already has $K^4$ or $K_{2,3}$ as a minor.
Now take any finite connected subgraph $K_0$ of $G$ which contains~$H$ and set~${K = G[V(K_0) \cup N(K_0)]}$.
Next let us take an embedding of $|G|$ as in statement~$(ii)$ of this theorem.
It is easy to see using Lemma~\ref{struct_1} that our fixed embedding of $|G|$ induces an embedding of $G_K$ into a closed disk such that all vertices of~$G_K$ lie on the boundary of the disk.
This implies that $G_K$ is outerplanar.
So~$G_K$ can neither contain $K^4$ nor $K_{2,3}$ as a minor by Theorem~\ref{outerplanar_count_char}, which contradicts that $H$ is a subgraph of $G_K$.

Now let us assume $(i)$ to prove the remaining implication.
We set $K_0$ as an arbitrary connected subgraph of $G$ with at least three vertices.
Next we define~${K_{i+1} = G[V(K_i) \cup N(K_i)]}$ for every ${i \geq 0}$.
Inside $G$ we define the vertex sets ${L_{i} = \lbrace v \in V(K_i) \; ; \; N(v) \subseteq V(K_i) \rbrace}$ for every $i \geq 1$.
Let then ${\tilde{K}_{i+1} = G_{K_{i+1}} - L_i}$ for every ${i \geq 1}$.
By Lemma~\ref{fin_2_con} we know that $G_{K_i}$ is $2$-connected for each ${i \geq 0}$.
Furthermore, $G_{K_i}$ contains neither $K^4$ nor $K_{2,3}$ as a minor for every ${i \geq 0}$ since it would also be a minor of $G$ contradicting our assumption.
So each $G_{K_i}$ is outerplanar by Theorem~\ref{outerplanar_count_char}.
Using statement~(ii) of Proposition~\ref{summary} we obtain that each $G_{K_i}$ has a unique Hamilton cycle $C_i$ and that there is an embedding $\sigma_i$ of $G_{K_i}$ into a fixed closed disk $D$ such that $C_i$ is mapped onto the boundary~$\partial D$ of $D$.
Set ${E_i = E(C_i) \cap  E(K_i)}$ for every ${i \geq 1}$.

Next we define an embedding of $G$ into $D$ and extend it to the desired embedding of $|G|$.
We start by taking $\sigma_1$.
Note again that $G_{K_1}$ is a finite $2$-connected outerplanar graph by Lemma~\ref{fin_2_con}.
Furthermore, $\sigma_1(C_1) = \partial D$ .
So we can use Lemma~\ref{straight_embed} to obtain an embedding $\sigma^*_1: G_{K_1} \longrightarrow D$ as in the statement of that lemma.
Because of Lemma~\ref{struct_1} we can extend $\sigma^*_1 \upharpoonright _{K_1}$ using $\sigma_2 \upharpoonright _{\tilde{K}_2}$, maybe after rescaling the latter embedding, to obtain an embedding $\varphi_2: G_{K_2} \longrightarrow D$ such that $\varphi_2(C_2) = \partial D$.
We apply again Lemma~\ref{straight_embed} with $\varphi_2$, which yields an embedding $\sigma^*_2: G_{K_2} \longrightarrow D$ as in the statement of that lemma.
Note that this construction ensures $\sigma^*_2 \upharpoonright _{K_1} = \sigma^*_1 \upharpoonright _{K_1}$.
Proceeding in the same way, we get an embedding $\sigma^*:G \longrightarrow D$ by setting $\sigma^* := \bigcup_{i \in \mathbb{N}} \sigma^*_i \upharpoonright _{K_i}$.
The use of Lemma~\ref{straight_embed} in the construction of $\sigma^*$ ensures that all edges are embedded as straight lines unless they are contained in any $E_i$.
However, all edges in the sets $E_i$, and therefore also all vertices of $G$, are embedded into $\partial D$.
Furthermore, we may assure that $\sigma^*$ has the following property:

\begin{table}[h]
\centering
\begin{tabular}{cl}
\begin{minipage}{0.9\textwidth}
\textit{Let $(M_i)_{i \geq 1}$ be any infinite sequence of components $M_i$ of ${G - K_i}$ where ${M_{i+1} \subseteq M_i}$. Also, let $\lbrace u_i, w_i \rbrace$ be the neighbourhood of $M_i$ in $G$.
Then the sequences $(\sigma^*(u_i))_{i \geq 1}$ and $(\sigma^*(w_i))_{i \geq 1}$ converge to a common point on $\partial D$.}
\end{minipage} & $(\ast)$
\end{tabular}
\end{table}

It remains to extend this embedding $\sigma^*$ to an embedding $\overline{\sigma}^*$ of all of $|G|$ into $D$.
First we shall extend the domain of $\sigma^*$ to all of $|G|$.
For this we need to prove the following claim.

\begin{claim}
For every end $\omega$ of $G$ there exists an infinite sequence $(M_i)_{i \geq 1}$ of components $M_i$ of ${G - K_i}$ with ${M_{i+1} \subseteq M_i}$ such that $\bigcap_{i \geq 1}\overline{M_i} = \lbrace \omega \rbrace$.
\end{claim}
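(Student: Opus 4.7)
The plan is to exploit the given exhaustion $K_1 \subseteq K_2 \subseteq \cdots$ of $G$ by fixing a ray $R \in \omega$ and letting $M_i$ be the component of $G - V(K_i)$ containing a tail of $R$. Since $K_0$ is finite and $G$ is locally finite, every $K_i$ is finite; hence $R$ eventually leaves $V(K_i)$ and $M_i$ is well defined. The nesting $M_{i+1} \subseteq M_i$ follows immediately from $V(K_i) \subseteq V(K_{i+1})$: every component of $G - V(K_{i+1})$ lies in some component of $G - V(K_i)$, and the one containing a tail of $R$ at level $i+1$ must equal the one containing a tail of $R$ at level $i$, namely $M_i$. Since $M_i$ contains a subray of $R$ converging to $\omega$, we get $\omega \in \overline{M_i}$ for each $i$, and so $\omega \in \bigcap_{i \geq 1}\overline{M_i}$.

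The remaining task is to show $\bigcap_i \overline{M_i} \subseteq \{\omega\}$. I would treat the three types of points in $|G|$ separately. Since $V(K_{i+1}) \supseteq V(K_i) \cup N(K_i)$ and $G$ is connected, the sets $V(K_i)$ exhaust $V(G)$. For a vertex $v$, pick $j$ with $v \in V(K_j)$; then no edge of $M_j$ is incident with $v$, so a basic open neighbourhood of $v$ in the $1$-complex topology (consisting of $v$ together with small initial segments of the edges at $v$) is disjoint from the point set of $M_j$, giving $v \notin \overline{M_j}$. An interior point of an edge $uv$ is handled identically once $j$ is chosen with $\{u, v\} \subseteq V(K_j)$, because then $uv$ is not an edge of $M_j$ and a small open sub-segment containing the point avoids $M_j$.

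The main obstacle is excluding another end $\omega' \neq \omega$, which is where topology genuinely enters. Here I would use that distinct ends are separated by some finite vertex set $S$: there exist distinct components $C_\omega, C_{\omega'}$ of $G - S$ such that every ray of $\omega$ has a tail in $C_\omega$ and every ray of $\omega'$ has a tail in $C_{\omega'}$. Take $j$ large enough that $S \subseteq V(K_j)$. Then $M_j \subseteq C_\omega$, since $M_j$ contains a tail of $R \in \omega$. On the other hand, a basic open neighbourhood of $\omega'$ associated with the separator $V(K_j)$, namely the component of $G - V(K_j)$ containing tails of rays of $\omega'$ together with its incident edge-interiors and the ends living in it, lies in $C_{\omega'}$ and is therefore disjoint from $M_j$. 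Hence $\omega' \notin \overline{M_j}$, which completes the argument and shows $\bigcap_{i \geq 1}\overline{M_i} = \{\omega\}$.
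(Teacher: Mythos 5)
Your proposal is correct and follows essentially the same route as the paper: define $M_i$ as the component of $G-K_i$ containing the tails of the $\omega$-rays, observe nesting and that the exhaustion $K_1\subseteq K_2\subseteq\cdots$ eventually swallows every vertex and edge, and then separate a putative second end $\omega'$ by choosing $j$ with the finite separator $S$ inside $K_j$. The only cosmetic difference is that you exclude $\omega'$ by exhibiting a basic open neighbourhood of $\omega'$ disjoint from $M_j$, whereas the paper invokes Lemma~\ref{jumping-arc} on the finite cut $E(M_j,K_j)$ to get $\overline{M_j}\cap\overline{M'_j}=\emptyset$; both are valid.
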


Since $K_i$ is finite, there exists a unique component of ${G - K_i}$ in which all $\omega$-rays have a tail.
Set this component as $M_i$.
It follows from the definition that $\omega$ lies in~$\overline{M_i}$.
Furthermore, we get that $\bigcap_{i \geq 1}\overline{M_i}$ does neither contain any vertex nor an inner point of any edge.
So suppose for a contradiction that $\bigcap_{i \geq 1}\overline{M_i}$ contains another end~$\omega' \neq \omega$.
We know there exists a finite set $S$ of vertices such that all tails of $\omega$-rays lie in a different component of $G-S$ than all tails of $\omega'$-rays.
By definition of the graphs~$K_i$ we can find an index $j$ such that $S \subseteq V(K_j)$.
So $\omega$ lies in $\overline{M_j}$ and $\omega'$ in~$\overline{M'_j}$ where~$M'_j$ is the component of $G-K_j$ in which all tails of $\omega'$-rays lie.
Since $G$ is locally finite, the cut $E(M_j, K_j)$ is finite.
Using Lemma~\ref{jumping-arc} we obtain that $\overline{M_j} \cap \overline{M'_j} = \emptyset$.
Therefore, $\omega' \notin \overline{M_j} \supseteq \bigcap_{i \geq 1}\overline{M_i}$.
This contradiction completes the proof of the claim.
\newline

Now let us define the map $\overline{\sigma}^*$.
For every vertex or inner point of an edge $x$, we set~$\overline{\sigma}^*(x) = \sigma^*(x)$.
For an end $\omega$ let $(M_i)_{i \geq 1}$ be the sequence of components $M_i$ of~${G - K_i}$ given by Claim~1 and $\lbrace u_i, w_i \rbrace$ be the neighbourhood of $M_i$ in $G$.
Using property~$(\ast)$ we know that $(\sigma^*(u_i))_{i \geq 1}$ and $(\sigma^*(w_i))_{i \geq 1}$ converge to a common point~$p_{\omega}$ on $\partial D$.
We use this to set $\overline{\sigma}^*(\omega) = p_{\omega}$.
This completes the definition of~$\overline{\sigma}^*$.

Next we prove the continuity of $\overline{\sigma}^*$.
For every vertex or inner point of an edge $x$, it is easy to see that an open set around $\overline{\sigma}^*(x)$ in $D$ contains $\overline{\sigma}^*(U)$ for some open set~$U$ around $x$ in $|G|$.
This holds because $G$ is locally finite and so it follows from the definition of $\overline{\sigma}^*$ using the embeddings $\sigma^*_i$.
Let us check continuity for ends.
Consider an open set $O$ around $\overline{\sigma}^*(\omega)$ in $D$, where $\omega$ is an end of $G$.
Let $B_{\varepsilon}(\overline{\sigma}^*(\omega))$ denote the restriction to $D$ of an open ball around $\overline{\sigma}^*(\omega)$ with radius $\varepsilon > 0$.
Then $B_{\varepsilon}(\overline{\sigma}^*(\omega))$ is an open set and, for sufficiently small $\varepsilon$, contained in~$O$.
We fix such an $\varepsilon$ for the rest of this proof.
Let $(M_i)_{i \geq 1}$ be a sequence as in Claim~1 for $\omega$ and $\lbrace u_i, w_i \rbrace$ be the neighbourhood of $M_i$ in $G$.
By property $(\ast)$ and the definition of $\overline{\sigma}^*$, we get that $(\sigma^*(u_i))_{i \geq 1}$ and $(\sigma^*(w_i))_{i \geq 1}$ converge to $\overline{\sigma}^*(\omega)$ on $\partial D$.
So there exists a $j \in \mathbb{N}$ such that $B_{\varepsilon}(\overline{\sigma}^*(\omega))$ contains $\sigma^*(u_i)$ and $\sigma^*(w_i)$ for every $i \geq j$.
By the definitions of $\overline{\sigma}^*$ and $\sigma^*$ using the embeddings $\sigma^*_i$, it follows that $\overline{\sigma}^*(\overline{M_j}) \subsetneqq B_{\varepsilon}(\overline{\sigma}^*(\omega)) \subseteq O$.
At this point we use the property of $\sigma^*$ that every edge of $G$ is embedded as a straight line unless it is embedded into $\partial D$.
Hence, if $vw \in E(G)$ and $\overline{\sigma}^*(v), \overline{\sigma}^*(w) \in B_{\varepsilon}(\overline{\sigma}^*(\omega))$, then $\overline{\sigma}^*(vw)$ is also contained in $B_{\varepsilon}(\overline{\sigma}^*(\omega))$ by the convexity of the ball.
Since $\overline{M_j}$ together with the inner points of the edges of $E(M_j, K_j)$ is a basic open set in~$|G|$ containing $\omega$ whose image under~$\overline{\sigma}^*$ is contained in $O$, continuity holds for ends too.

The next step is to check that $\overline{\sigma}^*$ is injective.
If $x$ and $y$ are each either a vertex or an inner point of an edge, then they already lie in some $K_j$.
By the definition of $\overline{\sigma}^*$ we get that $\overline{\sigma}^*(x) = \overline{\sigma}^*(y)$ if and only if there exists a $j \in \mathbb{N}$ such that $x$ and $y$ are mapped to the same point by the embedding of $K_j$ defined by $\bigcup^j_{i=1} \sigma^*_{i} \upharpoonright _{K_{i}}$.
So $x$ and $y$ need to be equal.

For an and $\omega$ of $G$, let $(M_i)_{i \geq 1}$ be a sequence of components of $G-K_i$ such that $\bigcap_{i \geq 1}\overline{M_i} = \lbrace \omega \rbrace$, which exists by Claim~1.
Let $\lbrace u_i, w_i \rbrace$ be the neighbourhood of $M_i$ in $G$.
Since $G$ is locally finite, there exists an integer $j$ such that $y$ lies in $K_j$ if it is a vertex or an inner point of an edge, or $y$ lies in $\overline{M'_j}$ for some component $M'_j \neq M_j$ of $G-K_j$ if $y$ is an end of $G$ that is different from $\omega$.
By the definition of $\overline{\sigma}^*$ and property $(\ast)$ we get that the arc on $\partial D$ between $\sigma^*(u_j)$ and $\sigma^*(w_j)$ into which the vertices of $M_j$ are mapped contains also $\overline{\sigma}^*(\omega)$ but not $y$.
Hence, $\overline{\sigma}^*(\omega) \neq \overline{\sigma}^*(y)$ if $\omega \neq y$.
This shows the injectivity of the map $\overline{\sigma}^*$.

To see that the inverse function of ${\overline{\sigma}^*}$ is continuous, note that $|G|$ is compact by Proposition~\ref{compact} and~$D$ is Hausdorff.
So Lemma~\ref{invers_cont} immediately implies that the inverse function of ${\overline{\sigma}^*}$ is continuous.
This completes the proof that $\overline{\sigma}$ is an embedding.

It remains to show the existence of a unique Hamilton circle of $G$ that is mapped onto $\partial D$ by $\overline{\sigma}$.
For this we first prove that $\partial D \subseteq \mathrm{Im}(\overline{\sigma})$.
This then implies that the inverse function of ${\overline{\sigma}^*}$ restricted to $\partial D$ is a homeomorphism defining a Hamilton circle of $G$ since it contains all vertices of $G$.
We begin by proving the following claim.

%${\overline{\sigma}^{-1} \upharpoonright \partial D}$

\begin{claim}
For every infinite sequence $(M_i)_{i \geq 1}$ of components $M_i$ of ${G - K_i}$ with ${M_{i+1} \subseteq M_i}$ there exists an end $\omega$ of $G$ such that $\bigcap_{i \geq 1}\overline{M_i} = \lbrace \omega \rbrace$.
\end{claim}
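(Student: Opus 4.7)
To prove Claim~2 I would split the argument into three steps --- nonemptiness of the intersection, exclusion of vertices and inner edge points, and uniqueness of the end --- relying throughout on the observation that $K_i$ is finite for every $i \geq 0$ (the implicit assumption being that $K_0$ is finite, after which the local finiteness of $G$ makes the recursion $K_{i+1} = G[V(K_i) \cup N(K_i)]$ preserve finiteness). Consequently the sequence $(K_i)$ exhausts $G$: by connectedness of $G$, every vertex, and hence every edge, of $G$ eventually lies in some $K_i$.

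For nonemptiness, each $\overline{M_i}$ is a nonempty closed subspace of the compact space $|G|$ by Proposition~\ref{compact}, and the sequence is decreasing since $M_{i+1} \subseteq M_i$, so the finite intersection property yields $\bigcap_{i \geq 1}\overline{M_i} \neq \emptyset$. To see that no vertex $v$ survives in the intersection, pick $i$ with $v \in V(K_i)$; since $M_i$ is a component of $G - K_i$ we have $V(M_i) \cap V(K_i) = \emptyset$, so $v \notin V(M_i)$, and because a basic neighbourhood of $v$ in $|G|$ meets only edges incident with $v$, none of which belongs to $E(M_i)$, we conclude $v \notin \overline{M_i}$. An inner point $p$ of an edge $e = uv$ is handled by the same reasoning applied to $u$: eventually $u \notin V(M_i)$, hence $e \notin E(M_i)$, and so $p \notin \overline{M_i}$. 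Thus every point of the intersection must be an end.

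For uniqueness, suppose towards a contradiction that two distinct ends $\omega \neq \omega'$ both lie in $\bigcap_{i \geq 1}\overline{M_i}$. Choose a finite vertex set $S \subseteq V(G)$ separating $\omega$ from $\omega'$ in $G$ and fix $j$ with $S \subseteq V(K_j)$. Since $V(M_j) \cap V(K_j) = \emptyset$, the connected subgraph $M_j$ is disjoint from $S$ and therefore lies in a single component of $G - S$. On the other hand, $\omega, \omega' \in \overline{M_j}$ provides representative rays of each end eventually inside $M_j$, placing them in the same component of $G - S$ --- contradicting the separating property of $S$. The main obstacle is really just bookkeeping: one needs to be comfortable with the correspondence between closures of subgraphs in $|G|$ and the combinatorial description in terms of rays and finite cuts, but otherwise the argument mirrors the proof of Claim~1 with the roles of existence and uniqueness swapped.
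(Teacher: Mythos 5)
Your proof is correct, and its overall skeleton --- the intersection avoids all vertices and inner points of edges, it is nonempty, and it contains at most one end --- matches the paper's. The one step where you genuinely diverge is nonemptiness: you get it from compactness of $|G|$ (Proposition~\ref{compact}) applied to the nested nonempty closed sets $\overline{M_i}$, whereas the paper explicitly constructs a ray $R$ having a tail in every $M_i$ (by concatenating paths inside the connected, nested $M_i$) and observes that the end containing $R$ lies in every $\overline{M_i}$. Your route is shorter and outsources the work to general topology; the paper's is more hands-on but produces a concrete witness for $\omega$. For uniqueness you also argue slightly differently: rather than invoking Lemma~\ref{jumping-arc} to conclude $\overline{M_j} \cap \overline{M'_j} = \emptyset$ for distinct components of $G-K_j$ (which is how the paper's Claim~1 argument, reused here, proceeds), you note directly that the connected subgraph $M_j$ avoids the separator $S \subseteq V(K_j)$ and hence lies in a single component of $G-S$; this is more elementary and equally valid. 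The only point you wave at rather than prove is that $\omega \in \overline{M_j}$ forces the $\omega$-rays to have tails in $M_j$; this follows by considering the basic open neighbourhood of $\omega$ determined by the finite set $V(K_j)$, and is at the same level of rigour as the paper's own use of this fact, so the omission is acceptable.
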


Let $(M_i)_{i \geq 1}$ be any sequence as in the statement of the claim.
Since for every vertex $v$ there exists a $j \in \mathbb{N}$ such that $v \in K_j$, we get that $\bigcap_{i \geq 1}\overline{M_i}$ is either empty or contains ends of $G$.
Using that each $M_i$ is connected and that $M_{i+1} \subseteq M_i$, we can find a ray $R$ such that every $M_i$ contains a tail of $R$.
Therefore, $\bigcap_{i \geq 1}\overline{M_i}$ contains the end in which $R$ lies.
The argument that $\bigcap_{i \geq 1}\overline{M_i}$ contains at most one end is the same as in the proof of Claim~1.
This completes the proof of Claim~2.
\newline

Suppose a point $p \in \partial D$ does not already lie in $\mathrm{Im}(\sigma^*)$.
Then it does not lie in $\mathrm{Im}(\sigma^*_i \upharpoonright _{K_i})$ for any $i \geq 1$.
So there exists an infinite sequence $(M_i)_{i \geq 1}$ of components $M_i$ of ${G - K_i}$ with ${M_{i+1} \subseteq M_i}$ such that $p$ lies in the arc~$A_i$ of $\partial D$ between $\sigma^*(u_i)$ and $\sigma^*(w_i)$ into which the vertices of $M_i$ are mapped, where $\lbrace u_i, w_i \rbrace$ denotes the neighbourhood of $M_i$ in $G$.
Using Claim~2 we obtain that there exists an end $\omega$ of $G$ such that $\bigcap_{i \geq 1}\overline{M_i} = \lbrace \omega \rbrace$.
By property $(\ast)$ of the map~$\sigma^*$ the sequences $(\sigma^*(u_i))_{i \geq 1}$ and $(\sigma^*(w_i))_{i \geq 1}$ converge to a common point on~$\partial D$.
This point must be $p$ since the arcs $A_i$ are nested.
Now the definition of $\overline{\sigma}^*$ tells us that~$\overline{\sigma}^*(\omega) = p$.
Hence $\partial D \subseteq \mathrm{Im}(\overline{\sigma}^*)$ and $G$ is Hamiltonian.

We finish the proof by showing the uniqueness of the Hamilton circle of $G$.
Suppose for a contradiction that $G$ has two subgraphs $C_1$ and $C_2$ yielding different Hamilton circles $\overline{C_1}$ and $\overline{C_2}$.
Then there must be an edge $e \in E(C_1) \setminus E(C_2)$.
Let $j \in \mathbb{N}$ be chosen such that $e \in E(K_j)$.
By Lemma~\ref{struct_1} we obtain that $G_{K_j}[E(C_1) \cap E(G_{K_j})]$ and $G_{K_j}[E(C_2) \cap E(G_{K_j})]$ are two Hamilton cycles of $G_{K_j}$ differing in the edge $e$.
Note that $G_{K_j}$ is a finite $2$-connected outerplanar graph.
The argument for this is the same as for $G_K$ in the proof that $(ii)$ implies $(i)$.
This yields a contradiction since $G_{K_j}$ has a unique Hamilton cycle by statement~(ii) of Proposition~\ref{summary}.
\end{proof}

Next we deduce Corollary~\ref{Cor_contr}.
Let us recall its statement first.

\setcounter{section}{1}
\setcounter{theorem}{8}
\begin{corollary}
The edges contained in the Hamilton circle of a locally finite \linebreak $2$-connected graph not containing $K^4$ or $K_{2,3}$ as a minor are precisely the \linebreak $2$-contractible edges of the graph unless the graph is isomorphic to a $K^3$.
\end{corollary}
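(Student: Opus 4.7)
The plan is to leverage the finite outerplanar approximations $G_{K_i}$ constructed in the proof of Theorem~\ref{HC_K_4-K_2,3}, together with Sys\l{}o's result recorded in Proposition~\ref{summary}(ii). If $G$ is finite then $G$ itself is outerplanar by Theorem~\ref{outerplanar_count_char} and the conclusion is immediate, so I shall assume throughout that $G$ is infinite and write $\overline{C}$ for its unique Hamilton circle, supplied by Theorem~\ref{HC_K_4-K_2,3}.

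First I would record the elementary fact that, in any $2$-connected graph $H$ with edge $e = uv$, the edge $e$ is $2$-contractible if and only if $H - \{u, v\}$ is connected: contraction cannot create a cut vertex anywhere except at the identified vertex, and the identified vertex is a cut vertex of $H/e$ precisely when $H - \{u, v\}$ is disconnected. Thus it suffices to prove, for every $e = uv \in E(G)$,
\[
e \in E(\overline{C}) \iff G - \{u, v\} \text{ is connected.}
\]

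To do so I would fix $e = uv \in E(G)$, choose a finite connected subgraph $K_0 \subseteq G$ containing $u$, $v$ and at least two further vertices, and set $K_1 := G[V(K_0) \cup N(K_0)]$. The proof of Theorem~\ref{HC_K_4-K_2,3} supplies a finite $2$-connected outerplanar graph $G_{K_1}$ whose unique Hamilton cycle $C_1$ satisfies $E(C_1) \cap E(K_1) = E(\overline{C}) \cap E(K_1)$. Because $|V(G_{K_1})| \geq |V(K_1)| \geq 4$, the graph $G_{K_1}$ is not isomorphic to $K^3$, and Proposition~\ref{summary}(ii) yields that $e \in E(C_1)$ if and only if $e$ is $2$-contractible in $G_{K_1}$, i.e.\ if and only if $G_{K_1} - \{u, v\}$ is connected. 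Concatenating these equivalences, the corollary reduces to establishing
\[
G - \{u, v\} \text{ connected} \iff G_{K_1} - \{u, v\} \text{ connected.}
\]

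The main obstacle is this last equivalence, and the key tool is Lemma~\ref{struct_1}: every component $M$ of $G - K_1$ satisfies $|N(M)| = 2$, and since $M$ is connected and disjoint from $\{u, v\}$, it lies in a single component of $G - \{u, v\}$. Hence any bipartition witnessing a disconnection of $G - \{u, v\}$ descends to one of $G_{K_1} - \{u, v\}$ by placing each contracted vertex $m_M$ on the side containing $V(M)$. Conversely, a disconnection of $G_{K_1} - \{u, v\}$ lifts to $G - \{u, v\}$ by replacing each $m_M$ with $V(M)$: the only edges of $G$ that could bridge the resulting sides would have to connect two distinct components of $G - K_1$ (impossible), or join some $V(M)$ to $V(K_1)$ through $N(M) \subseteq V(K_1)$, which corresponds to an edge of $G_{K_1}$ already bridging the original cut. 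Once this equivalence is in hand, combining it with the reduction of the previous paragraph proves the corollary.
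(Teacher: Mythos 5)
Your proof is correct and follows essentially the same route as the paper: both arguments rest on the finite $2$-connected outerplanar contraction minors $G_K$, Sys\l{}o's characterisation from Proposition~\ref{summary}(ii), and the correspondence between separations of $G-\{u,v\}$ and of $G_K-\{u,v\}$ afforded by Lemma~\ref{struct_1}. The only divergence is that the paper dispatches the forward direction (edges of the circle are $2$-contractible) more directly, by observing that the contracted circle is a Hamilton circle of $G/e$ and invoking Corollary~\ref{2-con}, whereas you obtain both directions uniformly from the single equivalence between $e\in E(C)$ and the connectedness of $G-\{u,v\}$; both work, and your explicit reduction of $2$-contractibility to that connectedness merely makes precise a step the paper leaves implicit.
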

\setcounter{theorem}{4}
\setcounter{section}{4}

\begin{proof}
Let $G$ be a locally finite $2$-connected graph not isomorphic to a $K^3$ and not containing $K^4$ or $K_{2,3}$ as a minor.
Further, let $C$ be the subgraph of $G$ such that $\overline{C}$ is the Hamilton circle of $G$.
First we show that each edge $e \in E(C)$ is a $2$-contractible edge.
Note for this that the closure of the subgraph of $G/e$ formed by the edge set $E(C) \setminus \lbrace e \rbrace$ is a Hamilton circle in $|G/e|$.
Hence, $G/e$ is $2$-connected by Corollary~\ref{2-con}.

It remains to verify that no edge of $E(G) \setminus E(C)$ is $2$-contractible.
For this we consider any edge $e = uv \in E(G) \setminus E(C)$.
Let $K$ be a finite connected induced subgraph of $G$ containing at least four vertices as well as $N(u) \cup N(v)$, which is a finite set since $G$ is locally finite.
Then we know by Lemma~\ref{fin_2_con} and by using the locally finiteness of $G$ again that $G_K$ is a finite $2$-connected graph not containing $K^4$ or $K_{2,3}$ as a minor.
So by Theorem~\ref{outerplanar_count_char} and Proposition~\ref{summary} we get that $G_K$ has a unique Hamilton cycle consisting precisely of its $2$-contractible edges.
However, as we have seen in the proof of Theorem~\ref{HC_K_4-K_2,3}, $G_K[E(C) \cap E(G_K)]$ is the unique Hamilton cycle of $G_K$ and does not contain $e$.
Since $G_K$ is outerplanar, we get that the vertex of $G_K/e$ corresponding to the edge $e$ is a cut vertex in $G_K/e$.
By our choice of $K$ containing $N(u) \cup N(v)$, we get that the vertex in $G/e$ corresponding to the edge $e$ is a cut vertex of $G/e$ too.
So $e$ is not $2$-contractible.
\end{proof}

The question arises whether one could prove the more complicated part of Theorem~\ref{HC_K_4-K_2,3}, the implication $(i) \Longrightarrow (ii)$, by mimicking a proof for finite graphs.
To see the positive answer for this question, let us summarize the proof for finite graphs except the part about the uniqueness.

By Theorem~\ref{outerplanar_count_char} every finite graph without $K^4$ or $K_{2,3}$ as a minor can be embedded into the plane such that all vertices lie on a common face boundary.
Since every face of an embedded $2$-connected graph is bounded by a cycle, we obtain the desired Hamilton cycle.

So for our purpose we would first need to prove a version of Theorem~\ref{outerplanar_count_char} for~$|G|$ where $G$ is a locally finite connected graph.
This can similarly be done in the way we have defined the embedding for the Hamilton circle in Theorem~\ref{HC_K_4-K_2,3} by decomposing the graph into finite parts using Lemma~\ref{struct_1}.
Since none of these parts contains a $K^4$ or a $K_{2,3}$ as a minor, we can fix appropriate embeddings of them and stick them together.
However, in order to obtain an embedding of $|G|$ we have to be careful.
We also need to ensure that the embeddings of finite parts that converge to an end in $|G|$ also converge to a point in the plane where we can map the corresponding end to.

The second ingredient of the proof is the following lemma pointed out by Bruhn and Stein, but which is a corollary of a stronger and more general result of Richter and Thomassen~\cite[Prop.\ 3]{richter}.

\begin{lemma}\label{circ_boundary}\cite[Cor.\ 21]{bruhn_stein}
Let $G$ be a locally finite $2$-connected graph with an embedding ${\varphi : |G| \longrightarrow S^2}$.
Then the face boundaries of $\varphi(|G|)$ are circles of $|G|$.
\end{lemma}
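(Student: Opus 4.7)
My plan is to apply the combinatorial characterization of circles in Lemma~\ref{circ} to the subspace $\varphi^{-1}(B)$, where $B = \partial F$ is the boundary in $S^{2}$ of a chosen face $F$ of $\varphi(|G|)$. Let $C$ be the subgraph of $G$ consisting of those vertices and edges whose images under $\varphi$ lie in $B$. Since $\varphi$ is a continuous injection from the compact space $|G|$ (Proposition~\ref{compact}) into a Hausdorff space, Lemma~\ref{invers_cont} makes it a homeomorphism onto its image, so $\varphi^{-1}(B) = \overline{C}$. It then suffices to check that $\overline{C}$ is topologically connected, that every vertex in $\overline{C}$ has degree $2$ in $\overline{C}$, and that every end in $\overline{C}$ has edge-degree $2$ in $\overline{C}$.

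The vertex condition is an essentially local, finite-graph argument. Around a vertex $v \in \overline{C}$, local finiteness together with $\varphi$ yields a cyclic order on the edges at $v$ and partitions a small disc neighbourhood of $\varphi(v)$ into angular sectors, each contained in a unique face. If $F$ occupied two non-adjacent blocks of sectors at $v$, a Schönflies-type argument on such a disc would force $G-v$ to split into parts embedded on opposite sides, contradicting that $G$ is $2$-connected. Hence $F$ meets $v$ in exactly one maximal block of sectors, whose two bounding edges are precisely the two edges of $C$ at $v$.

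For topological connectedness I would invoke Lemma~\ref{top_conn}: if a finite cut $D$ of $G$ has $\overline{C}$ meeting both sides, then $F$ has accumulation points on both sides; an arc inside $F$ between two such points must, by the planarity of $\varphi$ and Lemma~\ref{jumping-arc}, cross some edge of $D$, and that edge lies on $\partial F$, hence in $C$.

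The main obstacle is the edge-degree $2$ condition at an end $\omega \in \overline{C}$. Here I would fix a nested sequence of finite vertex separators $S_{n}$ isolating $\omega$, and use the cyclic order on the edges of $\delta(S_{n})$ inherited from $\varphi$: an edge $e \in \delta(S_{n})$ contributes to $\partial F$ near $\omega$ only if it is one of the two extremal edges, in that cyclic order, of the sub-block on which $F$ accumulates. Passing to the limit over $n$ together with Theorem~\ref{cycspace} (applied via the even-cut criterion for the edge set $C$) should pin the edge-degree at $\omega$ to exactly $2$. Ensuring compatibility of the extremal choices across different $n$, so that the two sequences of extremal edges actually coalesce into two edge-disjoint arcs in $\overline{C}$ ending at $\omega$, is the technical heart of the argument and where I would invest the bulk of the work.
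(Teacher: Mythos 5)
First, a point of orientation: the paper does not prove Lemma~\ref{circ_boundary} at all. It is quoted from Bruhn and Stein, who in turn obtain it from a theorem of Richter and Thomassen on $2$-connected planar spaces (essentially classical plane-continuum topology). So there is no ``paper proof'' to match; your plan of verifying the three conditions of Lemma~\ref{circ} directly is a legitimate alternative strategy, but as written it has concrete gaps. The first is at the very start: the identity $\varphi^{-1}(\partial F)=\overline{C}$ does \emph{not} follow from $\varphi$ being a homeomorphism onto its image. That the boundary of a face is a \emph{standard} subspace --- that it contains a whole edge whenever it contains an inner point of that edge, and, more delicately, that every end lying in $\partial F$ is in the closure of the vertices and edges contained in $\partial F$ --- is itself a nontrivial statement requiring $2$-connectedness (it is a separate lemma in Bruhn--Stein preceding their Corollary~21). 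Without it you are not entitled to apply Lemma~\ref{circ} or Lemma~\ref{top_conn}, both of which concern standard subspaces only.

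The second gap is in your connectedness argument, which is incoherent as stated: an arc lying \emph{inside} the open face $F$ is by definition disjoint from $\varphi(|G|)$ and therefore cannot ``cross some edge of $D$''; Lemma~\ref{jumping-arc} is about arcs in $|G|$ avoiding the interiors of the cut edges, not about arcs in the spherical complement of $\varphi(|G|)$. (Connectedness of $\partial F$ is in fact available for free from plane topology --- the boundary of a complementary domain of a continuum in $S^2$ is a continuum --- but that is again not an application of Lemma~\ref{top_conn}.) Third, the end edge-degree step, which you correctly identify as the technical heart, is only a plan: to use Theorem~\ref{cycspace} you must first prove that $E(C)$ meets every finite cut of $G$ evenly, and to cap the degree at $2$ you need an upper bound on $|E(C)\cap D|$ for finite cuts $D$ around the end; neither is argued, and the ``coalescence'' of your extremal edges into two edge-disjoint arcs is exactly the part you leave open. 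The vertex-degree argument via angular sectors and $2$-connectedness is fine, but with the other three points unresolved the proposal is an outline rather than a proof.
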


\noindent These observations show that the proof idea for finite graphs is still applicable for locally finite graphs.

Let us compare the proof for the implication $(i) \Longrightarrow (ii)$ of Theorem~\ref{HC_K_4-K_2,3} that we sketched right above, with the one we outlined completely.
The two proofs share a big similarity.
Both need to show first that $|G|$ can be embedded into the plane such that all vertices lie on a common face boundary if $G$ is a connected or $2$-connected, respectively, locally finite graph without $K^4$ or $K_{2,3}$ as a minor.
At this point the proof we outlined completely already incorporates further properties into the embedding without too much additional effort.
Especially, we use the $2$-connectedness of the graph there by finding suitable finite $2$-connected contraction minors.
Then we apply Proposition~\ref{summary} for these.
The embeddings we obtain for the contraction minors allow us to define an embedding of $|G|$ into a fixed closed disk.
Furthermore, this embedding of $|G|$ has the additional property that its restriction onto the boundary of the disk directly witnesses the existence of a Hamilton circle.
The second proof, however, takes a step backward and argues more general.
There the $2$-connectedness of $G$ is used to apply Lemma~\ref{circ_boundary}, which, as noted before, is a corollary of a more general result of Richter and Thomassen~\cite[Prop.\ 3]{richter}.
At this point we forget about the special embedding of $|G|$ into the plane that we had to construct before.
We continue the argument with an arbitrary one given that $G$ is a $2$-connected locally finite graph.
So for the purpose of proving the implication $(i) \Longrightarrow (ii)$ of Theorem~\ref{HC_K_4-K_2,3}, the outlined proof is more straightforward and self-contained.

\section{A cubic infinite graph with a unique Hamilton circle}

This section is dedicated to Theorem~\ref{Q_mohar_Yes}.
We shall construct an infinite graph with a unique Hamilton circle where all vertices in the graph have degree $3$.
Furthermore, all ends of that graph have vertex-degree $3$ as well as edge-degree $3$.
The main ingredient in our construction is the finite graph $T$ depicted in Figure~\ref{Tutte_HCs}.
This graph has three distinguished vertices of degree $1$, which we denote by $u$, $l$ and $r$ as in Figure~\ref{Tutte_HCs}.
For us, the important feature of $T$ is that we know where all \textit{Hamilton paths}, i.e., spanning paths, of $T-u$ and $T-r$ proceed.
Tutte~\cite{tutte} came up with the graph $T$ to construct a counterexample to Tait's conjecture~\cite{tait}, which said that every $3$-connected cubic planar graph is Hamiltonian.
The crucial observation of Tutte in~\cite{tutte} was that $T-u$ does not contain a Hamilton path.
We shall use this observation as well, but we need more facts about $T$, which are covered in the following lemma.
The proof is straightforward, but involves several cases that need to be distinguished.

\begin{lemma}\label{HCs_in_T}
There is no Hamilton path in $T-u$, but there are precisely two in $T-r$ (see Figure~\ref{Tutte_HCs}).
\end{lemma}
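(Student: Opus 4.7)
The plan is a straightforward forcing argument exploiting the rigidity of cubic graphs under the Hamilton-path constraint. Fix labels for the vertices of $T$ as in Figure~\ref{Tutte_HCs}, and recall that all non-distinguished vertices of $T$ have degree $3$. The key principle throughout is that in any Hamilton path $P$, an internal vertex of degree $3$ uses exactly two of its three incident edges, while a degree-$1$ endpoint uses its unique edge. Consequently, fixing the status (in or out of $P$) of a single edge at a cubic vertex often propagates: two out-edges at a vertex force the third to be in, and two in-edges force the third to be out. This local propagation, together with the global constraint that $P$ is a path (connected, acyclic), is what drives both statements.

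For $T-u$, any Hamilton path must have $l$ and $r$ as endpoints, since they are the only degree-$1$ vertices. I would start the forcing from the unique edges at $l$ and at $r$ and push inward using the cubic propagation rule. Tutte's fragment is designed so that the forcings from the two sides cannot be jointly completed: in every branch one eventually meets a cubic vertex where all three incident edges are required to be on the path, or one isolates a vertex set from the rest by a cut whose intersection with $P$ has the wrong cardinality for a $l$--$r$ path (Theorem~\ref{cycspace} applied to the symmetric difference of $P$ with an $l$--$r$ path detects this parity obstruction). The planar bilateral symmetry of $T$ that fixes $u$ and swaps $l \leftrightarrow r$ halves the number of distinct subcases. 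For $T-r$, the analogous propagation starting at $u$ and $l$ (the degree-$1$ vertices of $T-r$) does go through, but only in exactly two symmetric ways, corresponding to the two choices available at the central pentagon after all forced edges are imposed; I would then write down both resulting edge sets explicitly and check by inspection that each is spanning, connected, acyclic, and has the right degree sequence (exactly $u$ and $l$ with degree $1$, all other vertices with degree $2$).

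The only real obstacle is bookkeeping: $T$ has on the order of $15$ internal vertices, and the case tree branches at several intermediate cubic vertices before the propagation closes up. I would manage this by always forcing next at a vertex where two of the three incident edges already have known status, and by invoking the bilateral symmetry of $T$ to merge mirror-image subcases. No deeper structural fact is needed than the cubic propagation rule together with ``a Hamilton path is connected and acyclic''; the statement is essentially Tutte's original observation, repackaged to also enumerate the two Hamilton paths in $T-r$.
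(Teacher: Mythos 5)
Your plan --- forced-edge propagation at cubic vertices (two absent edges force the third present, two present force the third absent), combined with ruling out premature cycles, vertices of degree $3$ on the path, and non-spanning paths --- is exactly the method the paper uses for the $T-r$ part, including the feature that after all forced edges are determined a single remaining edge admits a two-way branch yielding the two Hamilton paths; for the $T-u$ part the paper simply cites Tutte's original observation rather than re-deriving it. The only caveat is that your write-up defers the actual case analysis, which for a concrete $15$-vertex graph is the entire content of the lemma, and the auxiliary devices you invoke (the bilateral symmetry of $T$ and the parity argument via Theorem~\ref{cycspace}) are not needed and are not used in the paper's own execution.
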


\begin{figure}[htbp]
\centering
\includegraphics{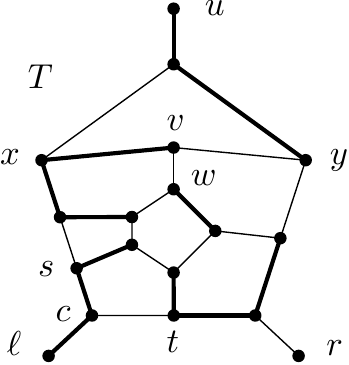}
\hspace{20pt}
\includegraphics{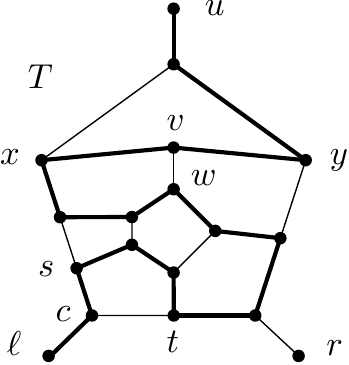}
\hspace{20pt}
\includegraphics{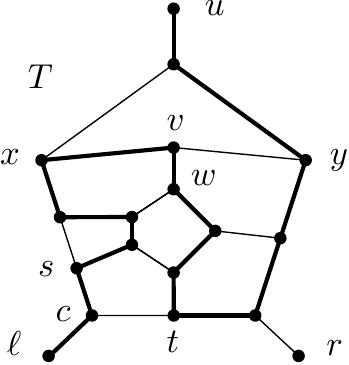}
\caption{The fat edges in the most left picture are in every Hamilton path of $T-r$. The fat edges in the other two pictures mark the two Hamilton paths of $T-r$.}
\label{Tutte_HCs}
\end{figure}

\begin{proof}
As mentioned already by Tutte~\cite{tutte}, the graph $T-u$ does not have a Hamilton path.
It remains to show that $T-r$ has precisely two Hamilton paths.
For this we need to check several cases, but afterwards we can precisely state the Hamilton paths.
For convenience, we label each edge with a number as depicted in Figure~\ref{Tutte_edges} and refer to the edges just by their labels for the rest of the proof.

\begin{figure}[htbp]
\centering
\includegraphics[width=55mm]{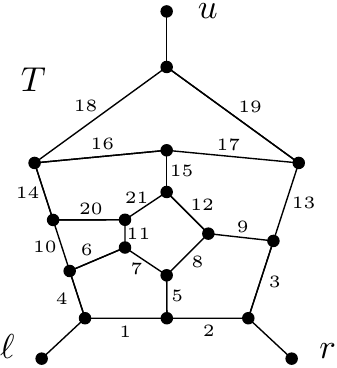}
\caption{Our fixed labelling of the relevant edges of $T$.}
\label{Tutte_edges}
\end{figure}

Obviously, the edges incident with $\ell$ and $u$ would need to be in every Hamilton path of $T-r$ since these vertices have degree $1$.
Furthermore, the edges $2$ and $3$ need to be in every Hamilton path of $T-r$ since the vertex incident with $2$ and $3$ has degree $2$ in $T-r$.

\begin{claim}
The edge $4$ needs to be in every Hamilton path of $T-r$.
\end{claim}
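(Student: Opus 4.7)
The plan is to exploit the degree constraint that a Hamilton path $P$ of $T-r$ places on each of its internal vertices. Since $u$ and $\ell$ are the only vertices of degree $1$ in $T-r$, they must be the two endpoints of $P$, and hence every other vertex of $T-r$ has degree exactly $2$ in $P$. The edges already known to lie in $P$ are the unique edges incident with $u$ and $\ell$, together with edges $2$ and $3$. I would first locate the two endvertices $x, y$ of edge $4$ in the labelled picture~\ref{Tutte_edges} and determine, from the cubic local structure of $T$, which of the three edges at $x$ and at $y$ are already forced into $P$ and which are still undetermined.

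Assuming for contradiction that $P$ does not contain edge $4$, I would then propagate the degree-$2$ constraint outward from $x$ and $y$. Namely, at each of $x$ and $y$ the two edges other than edge $4$ must lie in $P$; at the far endpoint of each of these newly forced edges the degree-$2$ constraint then forces one further edge; and so on. The plan is to carry this short forcing chain just far enough to reach one of three incompatible conclusions: (a) a vertex receives three incident edges of $P$, violating the degree-$2$ bound; (b) the already-forced edges $2$ and $3$ (or the edges at $u$ or $\ell$) together with the newly forced ones close a short cycle avoiding $\{u,\ell\}$, contradicting that $P$ is a path; or (c) the forced set disconnects $P$ into two components, contradicting connectedness. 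Any one of these contradictions forces edge $4 \in E(P)$.

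A cleaner alternative, which I would pursue first, is a parity argument on a small edge cut. Because the endpoints of $P$ are $u$ and $\ell$, the path $P$ meets every finite edge cut of $T-r$ in which $u$ and $\ell$ lie on the same side in an even number of edges, and every cut in which they lie on opposite sides in an odd number. One should be able to exhibit a small cut of $T-r$ containing edge $4$ together with a known subset of $\{1,2,3\}$ and of the edges at $u$, $\ell$, in which the count of already-forced edges of $P$, combined with the parity constraint, leaves edge $4$ as the only option to make the cut even (or odd) as required.

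The main obstacle is that either approach relies on the exact local geometry of $T$ near edge $4$, so the proof will inevitably be a short case analysis tied to Figure~\ref{Tutte_edges}. The chain of forced edges is not long, but verifying that no alternative branch survives requires checking all the small configurations at $x$ and $y$; the parity variant shortens this to the single task of picking the right cut.
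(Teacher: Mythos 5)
Your degree-forcing plan is essentially the paper's proof: it assumes $4\notin E(P)$, notes this forces $1\in E(P)$, and then propagates the degree-$2$ constraint through edges $5,7,8,6,10,11,20,21,14,16,18,19,13,17$ until the forced edges, together with $1,2,3$ and the pendant edges at $u$ and $\ell$, form a $u$--$\ell$ path that misses vertices of $T-r$, so $P$ would have to equal a non-spanning path --- a contradiction of the same flavour as the ones you list. The parity-on-cuts alternative you would try first is not what the paper does and is left unverified, but the forcing chain you describe as backup is exactly the published argument.
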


Suppose for a contradiction that there is a Hamilton path $P$ in $T-r$ that does not use $4$.
Then it needs to contain $1$.
Since it also contains $2$, we know $5 \notin E(P)$.
This implies further that $7,8 \in E(P)$.
We can use $4 \notin E(P)$ also to deduce that $6, 10 \in E(P)$ holds.
Now we get $11 \notin E(P)$ since $6, 7 \in E(P)$.
This implies $20, 21 \in E(P)$.
But now $14 \notin E(P)$ holds because $10, 20 \in E(P)$.
From this we get then $16, 18 \in E(P)$.
So $19$ cannot be contained in $P$, which implies $13, 17 \in E(P)$.
Now we arrived at a contradiction since the edges incident with $l$ and $u$ together with the edges of the set $\lbrace 1, 2, 3, 13,17, 16, 18 \rbrace$ form a $\ell$-$u$ path in $T-r$ that is contained in $P$ and needs therefore to be equal to $P$.
Then, however, $P$ would not be a Hamilton path $T-r$.
This completes the proof of Claim~3
\newline

We immediately get from Claim~3 that $5$ needs to be in every Hamilton path of $T-r$ and since $8$ and $9$ can not both be contained in any Hamilton path of $T-r$, because they would close a cycle together with $5, 2$ and $3$, we also know that $12$ needs to be in every Hamilton path of $T-r$.

\begin{claim}
The edges $14$ and $16$ lie in every Hamilton path of $T-r$.
\end{claim}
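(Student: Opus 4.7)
The plan is to reproduce the style of argument used in Claim~3, namely a deterministic case analysis driven by the fact that every internal vertex of $T-r$ has degree $3$ and must therefore be incident with exactly two edges of any Hamilton path $P$ of $T-r$. Together with the edges already known to lie in every Hamilton path of $T-r$ (the pendant edges at $\ell$ and $u$, together with the edges $2$, $3$, $4$, $5$, $12$), each hypothesis that a further specific edge is present or absent propagates through the incident degree-$3$ vertices as a chain of forced inclusions and exclusions. The approach is therefore: suppose for a contradiction that $14\notin E(P)$, apply these forcing rules locally, and derive either the premature closure of a cycle, the omission of some vertex of $T-r$, or the appearance of a vertex of degree three in $P$ — each of which contradicts $P$ being a Hamilton path.

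Concretely, I would first handle the edge $14$. Since $14\notin E(P)$, the two other edges at its endvertex inside $T-r$ must lie in~$P$; this in turn determines two of the three incident edges at each of their further endpoints, and so on. Combining these new forced edges with the edges already known to be in $P$ and with the already-derived exclusions from Claim~3, I expect the forcing chain to terminate just as in Claim~3: one obtains a closed $\ell$-to-$u$ path inside $P$ that misses some vertex of $T-r$, contradicting Hamiltonicity of $P$. Having thus established $14\in E(P)$, I would add $14$ to the list of forced edges and repeat the procedure for $16$, assuming $16\notin E(P)$; the extra forced edge $14$ tightens the propagation, and I expect exactly the same type of contradiction (a closed subpath leaving at least one vertex uncovered).

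The key tool throughout is the \emph{two-of-three} rule at each interior degree-$3$ vertex $v$ of $T-r$: if two of the three edges incident with $v$ are already known to lie in $P$, the third does not, while if one is known to lie in $P$ and another is known to be excluded, the third must lie in $P$. A vertex at which all three incident edges lie in $P$ would violate $P$ being a path, and a vertex incident with no edge of $P$ would be missed by $P$; either conclusion closes the argument.

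The main obstacle is not conceptual but purely combinatorial bookkeeping: one has to keep track of which edges have been forced in or out at each stage and choose a sensible order for visiting the vertices (for instance, radiating outwards from the endvertex of $14$, respectively $16$). Since no step of the propagation branches in a nontrivial way — every forcing either produces a unique next deduction or immediately yields the contradiction — the verification is a routine but somewhat tedious finite check inside the fixed graph $T$ with its $\le 21$ labelled edges, entirely analogous to the case analysis already carried out for Claim~3.
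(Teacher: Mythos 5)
Your plan is essentially the paper's own proof: since the edges $14$, $16$ and $18$ meet at a common interior vertex of $T-r$, the paper's contradiction hypothesis ``$18\in E(P)$'' is exactly your hypothesis that $14$ or $16$ is missing, and the paper then runs the same two-of-three forcing chain ($18\in E(P)\Rightarrow 19\notin E(P)\Rightarrow 13,17\in E(P)\Rightarrow\cdots$) using the previously forced edges $2,3,4,5,12$ until all three of $14,16,18$ are forced into $P$ at one vertex. The only thing you have deferred is the explicit execution of that finite chain, which terminates exactly as you predict.
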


Suppose for a contradiction that the claim is not true.
Then there is a Hamilton path $P$ of $T-r$ containing $18$.
So $P$ cannot contain $19$, which implies $13, 17 \in E(P)$.
Since $3, 13 \in E(P)$, we obtain $9 \notin E(P)$, from which we follow that $8 \in P$ holds.
Furthermore, $15$ cannot be contained in $P$, because then the edges $15, 17, 13, 3, 2, 5, 8, 12$ would form a cycle in $P$.
Therefore, $16$ is an edge of $P$.
From $5, 8 \in E(P)$ we can deduce that $7 \notin E(P)$ holds.
So $6$ and $11$ are edges of $P$, which that implies $10 \notin E(P)$.
Then $14, 20 \in E(P)$ needs to be true.
Now, however, we have a contradiction, because $P$ would have a vertex incident with three vertices, namely $14, 16$ and $18$.
This completes the proof of Claim~4
\newline

It follows from Claim~4 that $19$ is contained in every Hamilton path of $T-r$.
We continue with another claim.

\begin{claim}
The edges $6$ and $20$ lie in every Hamilton path of $T-r$.
\end{claim}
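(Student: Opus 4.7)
The plan is to mimic the constraint-propagation argument used in Claims~3 and~4. By the previous claims (together with the forced edges at the degree-$1$ vertices $\ell, u$ and the degree-$2$ vertex incident with $2,3$), every Hamilton path $P$ of $T-r$ must contain the edges
\[
\{2,3,4,5,12,14,16,19\}\cup\delta(\ell)\cup\delta(u).
\]
This leaves only a small number of undetermined edges; at each remaining vertex of $T-r$ (which has degree $3$ in $T$), exactly two incident edges may lie in $P$, so once two of its incident edges are known to be in $P$ (or known to be out of $P$) the third is forced. This is the same local mechanism used in the proofs of Claims~3 and~4.

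The plan is then as follows. For the edge $6$: assume for contradiction that $6\notin E(P)$. Using the forced edges above, propagate at each adjacent vertex in turn, pushing out the implications of $6\notin E(P)$ through the cluster of edges $\{7,8,9,10,11\}$ that lie near the left degree-$1$ vertex. I expect the propagation to force in a subset of $\{7,8,10,11\}$ whose combination with the already fixed edges $\{2,3,4,5,12\}$ produces either a short cycle (so $P$ is not a path) or a vertex having three incident edges in $P$ (so $P$ is not a path either), yielding the desired contradiction. The same scheme applied symmetrically, starting from the assumption $20\notin E(P)$, will force the analogous contradiction on the right-hand cluster $\{11,13,14,17,18,20,21\}$, using the additional facts that $14,16,19$ are forced by Claim~4.

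The main obstacle is purely bookkeeping: there are several local branches (for instance, once we know $4,5,12\in E(P)$, one must check both possible extensions at the branching vertices to exclude each), and each must be followed until a contradiction appears. No new idea is required beyond the forced-edge propagation already illustrated in Claims~3 and~4; the difficulty is only in organising the case split so that no possibility is missed. Once $6$ and $20$ are established, it will follow automatically, as in the earlier deductions, that the remaining edges of the two Hamilton paths of $T-r$ are determined, which will be used immediately after the claim to conclude Lemma~\ref{HCs_in_T}.
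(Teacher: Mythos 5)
Your plan is the same method the paper uses for this claim: forced-edge propagation from the already-established edges $\{2,3,4,5,12,14,16,19\}$ plus the pendant edges at $\ell$ and $u$, terminating in either a premature cycle or a vertex of degree $3$ in $P$. So the approach is not wrong. But as written it is only a plan: the actual chain of deductions is never executed ("I expect the propagation to force\dots"), and for a claim whose entire content \emph{is} that chain, this leaves the proof essentially unwritten. In particular, you have not verified that the propagation really does terminate in a contradiction rather than in a consistent assignment, which is the whole point at issue.

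You also miss the one structural observation that makes the argument short and removes your proposed case split. Since $4$ and $14$ are already forced (Claims~3 and~4) and the edge sets at the two relevant degree-$3$ vertices are $\{4,6,10\}$ and $\{10,14,20\}$, one has $6\notin E(P)\iff 10\in E(P)\iff 20\notin E(P)$. So the negation of the claim is equivalent to the single hypothesis $10\in E(P)$, and both $6$ and $20$ then drop out simultaneously; there is no need for two symmetric propagations. From $10\in E(P)$ the paper's chain is: $6\notin E(P)$ forces $7,11\in E(P)$, and $20\notin E(P)$ forces $21\in E(P)$; then $8\notin E(P)$, since otherwise $\{8,7,11,21,12\}$ spans a cycle in $P$; hence $9\in E(P)$, and now $\{9,3,2,5,7,11,21,12\}$ spans a cycle in $P$ -- a contradiction. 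Your proposal should be completed by carrying out this (or an equivalent) explicit deduction; until then the claim is not proved.
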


Suppose for a contradiction that the claim is not true.
Then there is a Hamilton path $P$ of $T-r$ containing $10$.
This immediately implies that $6 \notin E(P)$, yielding $7, 11 \in E(P)$, and $20 \notin E(P)$, yielding $21 \in E(P)$.
We note that $8$ cannot be an edge of $P$ since $P$ would then contain a cycle spanned by the edge set $\lbrace 8, 7, 11, 21, 12 \rbrace$.
Therefore, $9 \in E(P)$ must hold.
Here we arrive at a contradiction, since $P$ now contains a cycle spanned by the edge set $\lbrace 9, 3, 2, 5, 7, 11, 21, 12 \rbrace$.
This completes the proof of Claim~5
\newline

Using all the observations we have made so far, we can now show that $T-r$ has precisely two Hamilton paths and state them by looking at the edge $11$.
Assume that $11$ is contained in a Hamilton path $P_1$ of $T-r$.
Then $7, 21 \notin E(P_1)$ follows, because $6, 20 \in E(P_1)$ holds by Claim~5.
Since we could deduce from Claim~3 that $5, 12 \in E(P_1)$ holds, we get furthermore $8, 15 \in E(P_1)$.
This now yields $9, 17 \notin E(P_1)$ and, therefore, $13 \in E(P_1)$.
As we can see, the assumption that $11$ is contained in a Hamilton path $P_1$ of $T-r$ is true.
Also, $P_1$ is uniquely determined with respect to this property and consists of the fat edges in the most right picture of Figure~\ref{Tutte_HCs}.

Next assume that there is a Hamilton path $P_2$ of $T-r$ that does not contain the edge $11$.
Then $7$ and $21$ have to be edges of $P_2$.
Using again that $5, 12 \in E(P_2)$ holds, we deduce $8, 15 \notin E(P_2)$.
Then, however, we get $9, 17 \in E(P)$ and have already uniquely determined $P_2$, which corresponds to the fat edges in the middle picture of Figure~\ref{Tutte_HCs}.
\end{proof}

Using Lemma~\ref{HCs_in_T} we shall now prove Theorem~\ref{Q_mohar_Yes} by constructing a prescribed graph.
During the construction we shall often refer to certain distinguished vertices of $T$ that are named as depicted in Figure~\ref{Tutte_HCs}.
Let us recall the statement of the theorem.

\setcounter{section}{1}
\setcounter{theorem}{10}
\begin{theorem}
There exists an infinite connected graph $G$ with a unique Hamilton circle that has degree $3$ at every vertex and vertex- as well as edge-degree $3$ at every end.
\end{theorem}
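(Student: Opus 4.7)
The plan is to construct $G$ from countably many copies of the Tutte fragment $T$ and then read uniqueness off Lemma~\ref{HCs_in_T}. I would take copies $T_i$ indexed by $i \in \mathbb{Z}$ with boundary vertices $u_i, l_i, r_i$ and glue them by a small periodic linking structure between consecutive copies that supplies each $u_i, l_i, r_i$ with exactly two additional edges, so that $G$ is $3$-regular. The linking is chosen so that $G$ has two ends, each admitting three vertex-disjoint rays (for instance, three bi-infinite spines joining $u_i$ to $u_{i\pm 1}$, $l_i$ to $l_{i\pm 1}$ and $r_i$ to $r_{i\pm 1}$), while any cut separating a sufficiently small neighbourhood of an end from the rest has size exactly $3$. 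This immediately yields both vertex- and edge-degree $3$ at every end.

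Given an arbitrary Hamilton circle $C$ of $G$, I would first analyse its restriction $H_i := C \cap T_i$. Every interior vertex of $T_i$ has all three of its $G$-edges inside $T_i$, contributing degree $2$ to $H_i$, while each boundary vertex has exactly one edge inside $T_i$ and hence contributes $0$ or $1$. Since $\overline{C}$ is a circle, $H_i$ contains no finite cycle, so it is a linear forest; counting the three boundary vertices then forces $H_i$ to consist of a single Hamilton path of $T_i - v_i$ together with the isolated vertex $v_i$, for some $v_i \in \{u_i, l_i, r_i\}$. By Lemma~\ref{HCs_in_T}, $v_i \neq u_i$, so $v_i \in \{l_i, r_i\}$ and $H_i$ is one of the (at most) two Hamilton paths of $T_i - v_i$ displayed in Figure~\ref{Tutte_HCs}.

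It then remains to force uniqueness globally. Each boundary vertex imposes a local constraint on $C$: ``skipping $v_i$'' means both outside edges at $v_i$ lie in $C$, while each visited boundary vertex contributes exactly one of its two outside edges to $C$. I would choose the linking structure so that these constraints, when propagated across the gadget between $T_i$ and $T_{i+1}$, uniquely determine the local data at $T_{i+1}$ from that at $T_i$, exploiting the asymmetry between the two Hamilton paths of $T_i - l_i$ (respectively $T_i - r_i$) from Figure~\ref{Tutte_HCs} -- e.g.\ whether edge $11$ is used -- to transmit distinguishable constraints to the next fragment. This leaves at most one admissible global configuration. Existence of a Hamilton circle is then confirmed by exhibiting this configuration explicitly and checking via Lemmas~\ref{top_conn} and~\ref{circ} that its closure in $|G|$ is a circle: every vertex has degree $2$, every end has edge-degree $2$, and the closure is topologically connected.

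The main obstacle is the rigidity step. Lemma~\ref{HCs_in_T} permits two Hamilton paths per skipped vertex, so a naively symmetric gluing admits many Hamilton circles; the linking gadgets must be designed asymmetrically enough that differing local choices at a single $T_i$ propagate incompatible constraints to $T_{i+1}$ (and, ultimately, to $\pm\infty$). The remaining ingredients -- cubic-ness, the end-degree computation, and the verification that the resulting edge set is indeed a circle -- are comparatively routine once the construction has been pinned down.
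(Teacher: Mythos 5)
Your overall strategy --- build $G$ from copies of the Tutte fragment $T$, observe that a Hamilton circle meets the $3$-edge cut around each copy exactly twice and hence induces a Hamilton path of $T$ minus one boundary vertex, then invoke Lemma~\ref{HCs_in_T} --- is exactly the engine the paper uses. But your proof has a genuine gap precisely where you flag ``the main obstacle'': the linking gadget that is supposed to force uniqueness is never exhibited, only postulated to exist with the right asymmetry. Since Lemma~\ref{HCs_in_T} still permits two Hamilton paths of $T-r$, the entire burden of uniqueness rests on that gadget, and asserting that one ``would choose'' it suitably is not a proof. The paper's resolution is structurally different from your $\mathbb{Z}$-chain: it inserts, recursively, two fresh copies of $T$ into the positions of the vertices $c$ and $v$ of every existing copy (identifying $u,\ell,r$ of the new copies with the neighbours of $c$, resp.\ $v$), producing a graph with a binary-tree hierarchy of copies and continuum many ends. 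Uniqueness then propagates \emph{up} the hierarchy for free: the induced path in a child copy must use the edge at its $u$-vertex (as $T-u$ has no Hamilton path), which forces the parent's induced path to use the edges $\ell c$ and $vw$, and Lemma~\ref{HCs_in_T} shows exactly one of the two Hamilton paths of $T-r$ contains both. No bespoke asymmetric gadget is needed.

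Two further points. First, your case analysis allows $v_i \in \{l_i, r_i\}$, but Lemma~\ref{HCs_in_T} says nothing about Hamilton paths of $T-\ell$; you would need a separate (and unproved) analysis of that case, whereas the paper's gluing rules out the missed vertex being $\ell$ or $u$ outright. Second, your target of a two-ended chain is genuinely harder than you suggest: the paper's remark notes that a two-ended cubic example was only obtained later by Pitz, and that one-ended cubic examples with end degree $3$ are impossible. So the construction you are gesturing at can be made to work, but it is not the routine part of the argument --- it \emph{is} the argument, and it is missing.
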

\setcounter{theorem}{1}
\setcounter{section}{5}

\begin{proof}
We construct a sequence of graphs $(G_n)_{n \in \mathbb{N}}$ inductively and obtain the desired one $G$ as a limit of the sequence.
We start with $G_0 = T^1_0 = T$.

Now suppose we have already constructed $G_n$ for $n \geq 0$.
Furthermore, let ${\lbrace T^i_n \; ; \; 1 \leq i \leq 2^n \rbrace}$ be a specified set of disjoint subgraphs of $G_n$ each of which each is isomorphic to $T$.
We define $G_{n+1}$ as follows.
Take $G_n$ and two copies $T_c$ and $T_v$~of $T$ for each $T^i_n \subseteq G_n$.
Then identify for every $i$ the vertices of $T_c$ that correspond to $u$, $\ell$ and $r$, respectively, with the vertices of the related $T^i_n \subseteq G_n$ corresponding to $\ell$, $s$ and~$t$, respectively.
Also identify for every $i$ the vertices of $T_v$ corresponding to $u$,~$\ell$ and~$r$, respectively, with the ones of the related $T^i_n \subseteq G_n$ corresponding to~$w$,~$x$ and~$y$, respectively.
Finally, delete in each $T^i_n \subseteq G_n$ the vertices corresponding to~$c$ and~$v$, see Figure~\ref{Tutte_insert_constr}.
This completes the definition of~$G_{n+1}$.
It remains to fix the set of $2^{n+1}$ many disjoint copies of $T$ that occur as disjoint subgraphs in $G_{n+1}$.
For this we take the set of all copies $T_c$ and $T_v$ of $T$ that we have inserted in the subgraphs $T^i_n$ of $G_n$.

\begin{figure}[htbp]
\centering
\includegraphics{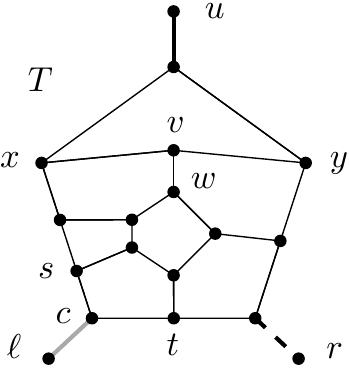}
\hspace{20pt}
\includegraphics{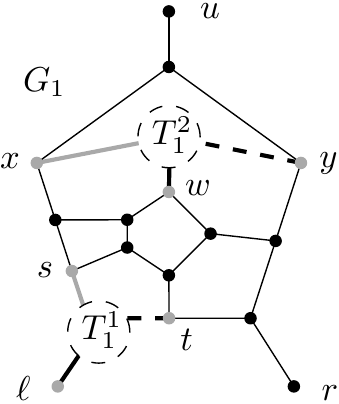}
\caption{A sketch of the construction of $G_1$. The fat black, grey and dashed edges incident with the grey vertices in the right picture correspond to the ones in the left picture.}
\label{Tutte_insert_constr}
\end{figure}

Using the graphs $G_n$ we define a graph $\hat{G}$ as a limit of them.
We set
\[\hat{G} = G[\hat{E}] \; \textnormal{ where } \; \hat{E} = \left \lbrace e \in \bigcup_{n \in \mathbb{N}} E(G_n) \; ; \; \exists N \in \mathbb{N} : e \in \bigcap_{n \geq N} E(G_n) \right \rbrace .\]
Note that an edge $e \in E(G_n)$ is an element of $\hat{E}$ if and only if it was not deleted during the construction of $G_{n+1}$ as an edge incident with one of the vertices that correspond to $c$ or $v$ in $T^i_n$ for some $i$.
Finally, we define $G$ as the graph obtained from $\hat{G}$ by identifying the three vertices that correspond to $u$, $\ell$ and $r$ of $T^1_0$.

Next let us verify that every vertex of $G$ has degree $3$ and that every end of $G$ has vertex- as well as edge-degree $3$ in $G$.
Since every vertex of $T$ except $u$, $\ell$ and $r$ has degree $3$, the construction ensures that every vertex of $G$ has degree $3$ too.
In order to analyse the end degrees, we have to make some observations first.
The edges of $G$ that are adjacent to vertices corresponding to $u$, $\ell$ and $r$ of any $T^i_n$ define a cut $E(A^i_n, B^i_n)$ of $G$.
Note that for any finite cut of a graph all rays in one end of the graph have tails that lie completely on one side of the cut.
Therefore, the construction of $G$ ensures that for every end $\omega$ of $G$ there exists a function $f: \mathbb{N} \longrightarrow \mathbb{N}$ with $f(n) \in \lbrace 1, \ldots, 2^n \rbrace$ such that all rays in $\omega$ have tails in $B^{f(n)}_n$ for each $n \in \mathbb{N}$ and $B^{f(n)}_n \supseteq B^{f(n+1)}_{n+1}$ with $\bigcap_{n \in \mathbb{N}} B^{f(n)}_n = \emptyset$.
Using that $|E(A^i_n, B^i_n)| = 3$ for every $n$ and $i$, this implies that every end of $G$ has edge-degree at most $3$.
Since there are three disjoint paths from $\lbrace u, \ell, r \rbrace$ to $\lbrace s, \ell, t \rbrace$ as well as to $\lbrace x, w, y \rbrace$ in~$T$, we can also easily construct three disjoint rays along the cuts $E(A^i_n, B^i_n)$ that belong to an arbitrary chosen end of $G$.
So every end of $G$ has vertex-degree $3$.
In total this yields that every end of $G$ has vertex- as well as edge-degree $3$ in $G$.

It remains to prove that $G$ has precisely one Hamilton circle.
We begin by stating the edge set of the subgraph $C$ defining the Hamilton circle $\overline{C}$ of $G$.
Let $E(C)$ consist of those edges of $E(G) \cap T^i_n$ for every $n$ and $i$ that correspond to the fat edges of $T$ in the most right picture of Figure~\ref{Tutte_HCs}.
Now consider any finite cut $D$ of~$G$.
The construction of $G$ yields that there exists an $N \in \mathbb{N}$ such that $D$ is already a cut of the graph obtained from $G_n$ by identifying the vertices corresponding to $u$, $\ell$ and $r$ of $T^1_0 \subseteq G_n$ for all~$n \geq N$.
Using this observation we can easily see that every vertex of $G$ has degree $2$ in $\overline{C}$.
We also obtain that every finite cut is met at least twice, but always in an even number of edges of $C$.
By Lemma~\ref{top_conn} we get that $\overline{C}$ is topologically and also arc-connected.
Therefore, every end of $G$ has edge-degree at least $1$ and at most~$3$ in~$\overline{C}$.
Together with Theorem~\ref{cycspace} this implies that every end of $G$ has edge-degree~$2$ in~$\overline{C}$.
Hence, Lemma~\ref{circ} tells us that $\overline{C}$ is a circle, which is Hamiltonian since it contains all vertices of $G$.

We finish the proof by showing that $\overline{C}$ is the unique Hamilton circle of $G$.
Since any Hamilton circle $\overline{H}$ of $G$ meets each cut $E(A^i_n, B^i_n)$ precisely twice, $\overline{H}$ induces a path through $T$ that contains all vertices of $T$ except one out of the set $\lbrace u, \ell, r \rbrace$.
By Lemma~\ref{HCs_in_T} we know that such paths must contain the edge adjacent to $u$.
Let us consider any $T^i_n$ in $G_n$.
Now let $T^j_{n+1}$ be the copy of $T$ whose vertices of degree~$1$ we have identified with the vertices corresponding to the neighbours of $c$ in $T^i_n$ during the construction of $G_{n+1}$.
The way we have identified the vertices implies that the path induced by $\overline{H}$ through $T^i_n$ must also use the edge adjacent to $\ell$ since the induced path in $T^j_{n+1}$ must use the edge adjacent to $u$.
With a similar argument we obtain that the induced path inside $T^i_n$ must use the edge corresponding to $vw$.
We know from Lemma~\ref{HCs_in_T} that there is a unique Hamilton path in $T-r$ that uses the edges~$\ell c$ and $vw$, namely the one corresponding to the fat edges in the most right picture of Figure~\ref{Tutte_HCs}.
So the edges which must be contained in every Hamilton circle are precisely those of $C$.
\end{proof}

\begin{remark}
\textnormal{After reading a preprint of this paper Max Pitz~\cite{pitz} carried further some ideas of this paper.
Also using the graph $T$, he recently constructed a two-ended cubic graph with a unique Hamilton circle where both ends have vertex- as well as edge-degree $3$.
He further proved that every one-ended Hamiltonian cubic graph whose end has edge-degree $3$ (or vertex-degree $3$) admits a second Hamilton circle.}
\end{remark}

\section*{Acknowledgement}

I would like to thank Tim R\"uhmann for reading an early draft of this paper and giving helpful comments.

\end{document}